\theoremstyle{plain}
\newtheorem{theorem}{Theorem}[section]
\theoremstyle{definition}
\theoremstyle{remark}
\theoremstyle{remark}
\theoremstyle {plain}
\journal{xxxxx}
\begin{document}
	
%\numberwithin{equation}{section}
%\numberwithin{table}{section}

\begin{frontmatter}

%%% Group authors per affiliation:
%\author{Afeez Abidemi\fnref{myfootnote}}
%\address{Radarweg 29, Amsterdam}
%\fntext[myfootnote]{Since 1880.}
%
%%% or include affiliations in footnotes:
%\author[mymainaddress,mysecondaryaddress]{Mohd Ismail Abd Aziz}
%\ead[url]{www.elsevier.com}
%
%\author[mysecondaryaddress]{Global Customer Service\corref{mycorrespondingauthor}}
%\cortext[mycorrespondingauthor]{Corresponding author}
%\ead{support@elsevier.com}
%
%\address[mymainaddress]{1600 John F Kennedy Boulevard, Philadelphia}
%\address[mysecondaryaddress]{360 Park Avenue South, New York}

\title{Optimal control and comprehensive cost-effectiveness analysis  for COVID-19}

%% use optional labels to link authors explicitly to addresses:
%% \author[label1,label2]{}
%% \address[label1]{}
%% \address[label2]{}

\author[mymainaddressa,mymainaddressb]{Joshua Kiddy Kwasi Asamoah\corref{mycorrespondingauthor}}
\ead{topeljoshua@gmail.com}
\address[mymainaddressa]{Complex Systems Research Center, Shanxi University,
	Taiyuan 030006, PR China}
\address[mymainaddressb]{Shanxi Key Laboratory of Mathematical Techniques and Big Data Analysis on Disease Control and Prevention, Shanxi University, Taiyuan 030006, PR China}
\author[mymainaddressEO]{Eric Okyere}
%\ead{eric.okyere@uenr.edu.gh }
\address[mymainaddressEO]{Department of Mathematics and Statistics, University of Energy and Natural Resources, Sunyani, Ghana}
%\author[mysecondaryaddress2]{Ernest Yankson}
%\ead{ernest.yankson@ucc.edu.gh}
%\address[mysecondaryaddress2]{Department of Mathematics, University of Cape Coast, Cape Coast, Ghana}
\author[mymainaddressbKKK]{Afeez Abidemi}
%\ead{aabidemi@futa.edu.ng}
\address[mymainaddressbKKK]{Department of Mathematical Sciences, Federal University of Technology Akure, PMB 704, Ondo State, Nigeria}
\author[mysecondaryaddress2]{Stephen E. Moore}
%\ead{stephen.moore@ucc.edu.gh}
%\author[mysecondaryaddress2]{Ernest Yankson}
%\ead{ernest.yankson@ucc.edu.gh}
\address[mysecondaryaddress2]{Department of Mathematics, University of Cape Coast, Cape Coast, Ghana}
\author[mymainaddressa,mymainaddressbc]{ Gui-Quan Sun\corref{mycorrespondingauthor}}
\ead{sunguiquan@sxu.edu.cn}
\address[mymainaddressbc]{Department of Mathematics, North University of China, Taiyuan, Shanxi 030051, China}
\cortext[mycorrespondingauthor]{Corresponding authors}
\author[mymainaddressa,mymainaddressb]{Zhen Jin }
\author[mysecondaryaddress2]{Edward Acheampong}
%\ead{eoacheampong@ug.edu.gh}
\address[mysecondaryaddress2]{Department of Statistics and Actuarial Science University of Ghana,
P.O. Box LG 115, Legon, Ghana}
\author[mymainaddressk]{Joseph Frank Gordon}
\address[mymainaddressk]{Department of Mathematics Education, Akenten Appiah Menka University of Skills Training and Entrepreneurial Development, Kumasi, Ghana}
%\ead{jinzhn@263.net}

\begin{abstract}
Cost-effectiveness analysis is a mode of determining both the cost and economic health outcomes of one or more control interventions. In this work, we have formulated a non-autonomous nonlinear deterministic model to study the control of COVID-19 to unravel the cost and economic health outcomes for the autonomous nonlinear model proposed for the Kingdom of Saudi Arabia.
 The optimal control model captures four time-dependent control functions, thus, $u_1$-practising physical or social distancing protocols; $u_2$-practising personal hygiene by cleaning contaminated surfaces with alcohol-based detergents; $u_3$-practising proper and safety measures by exposed, asymptomatic and symptomatic infected individuals; $u_4$-fumigating schools in all levels of education, sports facilities, commercial areas and religious worship centres. We proved the existence of the proposed optimal control model. The optimality system associated with the non-autonomous epidemic model is derived using Pontryagin's maximum principle. We have performed numerical simulations to investigate extensive cost-effectiveness analysis for fourteen optimal control strategies. Comparing the control strategies, we noticed that; Strategy 1 (practising physical or social distancing protocols) is the most cost-saving and most effective control intervention in Saudi Arabia in the absence of vaccination. But, in terms of the infection averted, we saw that strategy 6, strategy 11, strategy 12, and strategy 14 are just as good in controlling COVID-19.
\end{abstract}
\begin{keyword} Control strategies; Existence of optimal control; Cost minimizing analysis, Economic health outcomes.
	%\MSC[2010] 49M05 \sep 92D25 \sep 92D30
\end{keyword}
\end{frontmatter}
%\linenumbers

\section{Introduction}
The recent worldwide outbreaks of COVID-19 infectious disease has attracted a lot of attention in the mathematical modelling and analysis of the COVID-19. In \cite{piovella2020analytical}, the basic SEIR epidemic model is used to study and explain some analytical results for the asymptotic and peak values and their characteristic times of the susceptible human populations affected by the highly contagious COVID-19 disease. A SLIAR-type epidemic model is used to study COVID-19 infections in China \cite{arino2020simple}. Estimated basic reproduction numbers for the COVID-19 infectious disease transmission dynamics in Italy and China have been carried out in \cite{wangping2020extended}, using a modified classical SIR mathematical model characterized by time-dependent transmission rates. A  prediction and data-driven based SEIRQ COVID-19 nonlinear infection model is formulated and studied in \cite{cui2020dynamic}. The authors in \cite{hu2020dynamic} have developed and analyzed a nonlinear epidemic model to explain the spreading dynamics of the 2019 coronavirus among the susceptible human population, the environment as well as wild animals. Two novel data-driven compartmental models are proposed in \cite{mushayabasa2020role, garba2020modeling} to investigate the COVID-19 pandemic in South Africa.

Mathematical modelling tools are essential in studying infectious diseases epidemiology because they can at least give some insight into the spreading dynamics of disease outbreaks and help in suggesting possible control strategies. The authors in \cite{ fatima2020modeling} have constructed and analyzed a non-autonomous differential equation model by introducing medical mask, isolation, treatment, and detergent spray as time-dependent controls. Global parameter sensitivity analysis for a new COVID-19 differential equation model is carried out in the work of Ali and co-authors \cite{ali2020role}. They also proposed and analyzed a non-autonomous epidemic model for the COVID-19 disease in the same work using quarantine and isolation as time-dependent control functions.
 Furthermore, a COVID-19 mathematical is studied in recent work by the authors in \cite{lemecha2020optimal}, where they considered three time-dependent control functions consisting of preventive control measures (quarantine, isolation, social distancing), disinfection of contaminated surfaces to reduce intensive medical care and infected individuals in the population. A non-optimal and optimal control deterministic  COVID-19 models are studied in \cite{aldila2020optimal}. The authors explored control and preventive interventions such as rapid testing,  medical masks, improvement of medical treatment in hospitals, and community awareness. An optimal control nonlinear epidemic model for COVID-19 infection that captures optimal preventive and control strategies such as personal protection measures, treatment of hospitalized individuals, and public health education is formulated and analyzed to study the dynamics of the epidemic in Ethiopia \cite{deressa2020modeling}. Optimal Control analysis for the 2019 coronavirus epidemic has been studied using non-pharmaceutical control and preventive interventions to examine the dynamics of the disease in the USA \cite{perkins2020optimal}. The work in \cite{oud2021fractional} studied a fractional-order mathematical model for COVID-19 dynamics with quarantine, isolation, and environmental viral load. Asamoah et.al.\cite{asamoah2020global} presented a COVID-19 model to study the impact of the environment on the spread of the disease in Ghana. They further investigated the economic outcomes using cost-effectiveness analysis. Alqarni \cite{alqarni2020mathematical} formulated and analyzed a novel deterministic COVID-19 epidemic model characterized by nonlinear differential equations with six state variables to describe the COVID-19 dynamics in the Kingdom of Saudi Arabia.  They gave a detailed qualitative stability analysis and also determined the influential model parameters on the basic reproduction number, $\mathcal{R}_0$, using global sensitivity analysis. They further performed numerical simulations to support their theoretical results, following their novel mathematical modelling formulation, analysis, and the generated global sensitivity analysis results. We are motivated to present a cost-effectiveness analysis for the work in \cite{alqarni2020mathematical}. In recent times, cost-effectiveness analysis of epidemic optimal control models has become very important in suggesting realistic optimal control strategies to help reduce the spread of infectious diseases in limited-resource settings. Also, assessing the amount it cost to acquire a unit of a health outcome like infection averted, susceptibility prevented, life-year gained, or death prevented, and the expenses and well-being results of at least one or more interventions. See, e.g., \cite{seidu2020optimal,omame2020analysis,asamoah2021sensitivity,okyere2020analysis, panja2019optimal, agusto2014optimal, agusto2019optimal, berhe2020optimal, momoh2018optimal, olaniyi2020modelling, tilahun2017modelling, berhe2019co, berhe2018optimal,abidemi2020optimal,asamoah2021non,seidu2021mathematical,khan2021robust} and some of the references therein. There are vast area of research on COVID-19 where author(s) did not either study optimal version of their proposed model or that of the economic impact of their model see for example (\cite{abidemi2021impact,iddrisu2021predictive,acheampong2021modelling,otoo2021estimating,asamoah2020mathematical,khan2020dynamics,khan2020modeling,moore2021global,ngonghala2020mathematical,sun2020transmission,eikenberry2020mask,lopez2021modified,abay2021mathematical,bassey2021global} e.t.c), therefore, we hope that this article encourages researchers to dive deeper in investigating the economic-health impacts of COVID-19 models without optimal control analysis and cost-effectiveness analysis as done in \cite{agusto2013optimal}.
 The rest of the paper is organised as follows: Section \ref{ssec1} presents the general description of the model states, and transition terms from Alqarni \cite{alqarni2020mathematical}, Section \ref{ssec2} gives the bases for the formulation of the optimal control model,  the proof of existence and the characterisation of the optimal control problem. Section \ref{ssec3} contains the numerical simulations for the various control strategies and cost-effectiveness analysis. Section \ref{ssec:6} contain the concluding remarks.
 \section{The autonomous model}\label{ssec1}
The formulated model is divided into five distinct human compartments, identified as, the susceptible, $S(t)$,
exposed, $E(t)$, asymptomatic infected (not showing symptoms but infected other healthy people) $A(t)$,
symptomatic infected (that have symptoms of disease and infect other people) $I(t)$, and the recovered individuals, $R(t)$, where the total population is given as $N(t) = S(t)+E(t)+A(t)+I(t)+R(t)$.
The assumed concentration of the SARS-CoV-2 in the environment is denoted by $B(t)$. Individuals in the infected classes $E(t), A(t), I(t)$ are assumed of transmitting the disease to the susceptible individuals at the rate $\beta_1, \beta_2, \beta_3$, respectively, and $\beta_4$ is the propensity rate of susceptible individuals getting the virus through the environment. The set of differential equations for the autonomous system is given as
\begin{align}\label{SEIR} \frac{dS}{dt}&=\Lambda-\big(\beta_{1}E+\beta_{2}I+\beta_{3}A\big)\frac{S}{N}-\beta_{4}B\frac{S}{N}-dS, \notag \\
\frac{dE}{dt}&=\big(\beta_{1}E+\beta_{2}I+\beta_{3}A\big)\frac{S}{N}+\beta_{4}B\frac{S}{N}-(\delta+d)E,\notag\\
	\frac{dI}{dt}&=(1-\tau)\delta E-(d+d_1+\gamma_{1})I,\\
	\frac{dA}{dt}&=\tau \delta E-(d+\gamma_{2})A,   \notag\\
	\frac{dR}{dt}&=\gamma_{1}I+ \gamma_{2}A-dR,\notag\\
	\frac{dB}{dt}&=\psi_{1}E +\psi_{2}I+\psi_{3}A-\phi B, \notag
\end{align}
with the initial conditions \[S(0) = S_0 > 0, E(0) = E_0 \geq 0, I(0) = I_0 \geq 0, A(0) = A_0 \geq 0, R(0) = R_0 \geq 0, B(0) = B_0 \geq 0.\]
The model's recruitment rate is given as $\Lambda$ with $d$ representing the natural death rate. The Greek symbols $\beta_1, \beta_2,\beta_3,$ are the respective direct transmission rates among exposed and susceptible individuals, infected (showing symptoms) and susceptible individuals, symptomatically infected (not showing symptoms) and susceptible individuals, and $\beta_4$ is the indirect transmission of the virus to the susceptible individuals. The rate at which the exposed individuals develops symptoms become infected is denoted as $(1-\tau)\delta$, where the rate of new asymptomatic infection is represented as $\tau\delta$. The disease-induced death rate is denoted as $d_1$. Here, the symptomatic, asymptomatic recovery rate is epidemiological assumed as $\gamma_1$  and $\gamma_2$, respectively. Furthermore, the epidemiological rates for shedding the virus into the environment by the exposed, infected and asymptomatically infected people is denoted as $\psi_1, \psi_2$ and $\psi_3$ respectively. The rate of natural removal of the virus from the environment is denoted as $\phi$. Alqarni et.al\cite{alqarni2020mathematical} gave the basic reproduction expression, detailed qualitative stability analysis and also determined the influential model parameters on the basic reproduction number, $\mathcal{R}_0$, using global sensitivity analysis. They further performed numerical simulations to support their theoretical results. The basic reproduction number from Alqarni et.al\cite{alqarni2020mathematical} is given as
\begin{equation}\label{k1}
  \mathcal{R}_0= \frac{k_2(\delta\tau(\beta_4\psi_3 + \beta_3\phi)+k_3(\beta_4\psi_1 +\beta_1\phi))+\delta k_3(1-\tau)(\beta_4\psi_2 + \beta_2 \phi)}{k_1k_2k_3\phi},
\end{equation}
where $k_1 = d+\delta, k_2 =\gamma_1 +d +d_1$ and $k_3 =\gamma_2+d$.
Following their global sensitivity result of the basic reproduction number, section \ref{ssec2} is conceived.
 \section{Optimal control problem formulation and analysis of COVID-19 model}\label{ssec2}
In section 4.1 of the work in Alqarni et al. \cite{alqarni2020mathematical}.\, They found out that the most sensitive parameters in their basic reproduction number are: Contact rate among exposed and susceptible, $\beta_1$, contact rate among environment and susceptible, $\beta_4$, virus contribution due to state $E$ to compartment $B$, $\psi_1$, and virus removal from the environment, $\theta$. Therefore, to contribute the research knowledge on COVID-19 in Saudi Arabia, we incorporated the following control terms to study the most effective economic and health outcomes in combating this disease which has caused economic hardship in many countries.
\subsection{Formulation of the non-autonomous COVID-19 model}
\begin{itemize}
	\item $u_1$: practising physical or social distancing protocols.
	\item $u_2$:  practising personal hygiene by cleaning contaminated surfaces with alcohol based detergents.
	\item $u_3$: practising proper and safety measures by exposed, asymptomatic infected and symptomatic infected individuals.
	\item $u_4$: fumigating schools in all levels of education, sports facilities, commercial areas and religious worship centres.
\end{itemize}
Hence, based on \cite{alqarni2020mathematical}, our optimal control model of  is given as
\begin{align}
	\label{optimalmodelSEIR}
	\frac{dS}{dt}&=\Lambda-(1-u_1(t))\big(\beta_{1}E+\beta_{2}I+\beta_{3}A\big)\frac{S}{N}-(1-u_1(t)-u_2(t))\beta_{4}B\frac{S}{N}-dS, \notag \\
	\frac{dE}{dt}&=(1-u_1(t))\big(\beta_{1}E+\beta_{2}I+\beta_{3}A\big)\frac{S}{N}+(1-u_1(t)-u_2(t))\beta_{4}B\frac{S}{N}-(\delta+d)E,\notag\\
	\frac{dI}{dt}&=(1-\tau)\delta E-(d+d_1+\gamma_{1})I,\\
	\frac{dA}{dt}&=\tau \delta E-(d+\gamma_{2})A,   \notag\\
	\frac{dR}{dt}&=\gamma_{1}I+ \gamma_{2}A-dR,\notag\\
	\frac{dB}{dt}&=(1-u_3(t))\psi_{1}E +(1-u_3(t))\psi_{2}I+(1-u_3(t))\psi_{3}A-(u_4(t)+\phi) B, \notag
\end{align}
\subsection{Formulation of the objective functional}
In line with the standard in literature \cite{asamoah2020global,agusto2019optimal,berhe2020optimal,abidemi2020optimal,abidemi2019global,asamoah2017modelling,olaniyi2020mathematical,asamoah2020deterministic,asamoah2020backward,asamoah2018mathematical}, the control cost is measures by implementing a quadratic performance index or objective functional in this work. Thus, our goal is to minimize the objective functional, $\mathcal J$, given as
\begin{equation}
	\label{objectivefunctional}
	\mathcal{J}(u_1, u_2, u_3, u_4 ):=\min \int_{0}^{T}\bigg[A_{1}E+A_{2}I+A_3 A+A_4 B+ \dfrac{1}{2}\sum_{i=1}^{4}D_{i}u^{2}_{i}(t)\bigg]dt
\end{equation}
subject to the non-autonomous system \eqref{optimalmodelSEIR}, where $A_i>0$ ($i=1,2,3,4$) are the balancing weight constants on the exposed, asymptomatic and symptomatic infected individuals, and the concentration of corona virus in the environment respectively, whereas $D_i>0$ are the balancing cost factors on the respective controls $u_i$ (for $i=1,\ldots,4$), $T$ is the final time for controls implementation.

Suppose $\mathcal U$ is a non-empty control set defined by
\begin{equation}
\label{u}
\mathcal U=\left\{(u_1,u_2,u_3,u_4):u_i \ \mbox{Lebesgue measurable}, \ 0\le u_i\le 1, \mbox{for $i=1,\ldots,4$}, \ t\in[0,T]\right\}.
\end{equation}
Then, it is of particular interest to seek an optimal control quadruple $u^*=(u_1^*,u_2^*,u_3^*,u_4^*)$ such that
\begin{equation}
\label{optimalj}
\mathcal J(u^*)=\min\left\{\mathcal J(u_1,u_2,u_3,u_4):u_1,u_2,u_3,u_4\in\mathcal U\right\}.
\end{equation}

\subsection{Existence of an optimal control}

\begin{theorem}
\label{theorem-exist}
Given the objective functional $\mathcal J$ defined on the control set
$\mathcal U$ in \eqref{u}, then there exists an optimal control quadruple $u^*=(u_1^*,u_2^*,u_3^*,u_4^*)$ such that \eqref{optimalj} holds when the following conditions are satisfied \cite{okyere2020analysis,berhe2018optimal,olaniyi2020modelling}:
\begin{enumerate}
\item[(i)] The admissible control set $\mathcal U$ is convex and closed.
\item[(ii)] The state system is bounded by a linear function in the state and control variables.
\item[(iii)] The integrand of the objective functional $\mathcal J$ in \eqref{objectivefunctional} is convex in respect of the controls.
\item[(iv)] The Lagrangian is bounded below by
\begin{equation*}
a_0\left(\sum_{i=1}^4|u_i|^2\right)^\frac{a_2}{2}-a_1,
\end{equation*}
where $a_0,a_1>0$ and $a_2>1$.
\end{enumerate}
\end{theorem}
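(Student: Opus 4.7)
The plan is to verify the four conditions (i)--(iv) in turn, since by the classical Fleming--Rishel existence theorem (cited via the references) their joint validity yields an optimal quadruple $u^*$. I expect three of the four items to be essentially book-keeping; the one requiring genuine work is condition (ii), because it presupposes that solutions of the state system exist and remain bounded on $[0,T]$, which is what justifies the linear majorisation.

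First, for (i), I would note that $\mathcal U$ is a finite Cartesian product of the closed interval $[0,1]$ intersected with the space of Lebesgue-measurable functions on $[0,T]$; closedness and convexity follow coordinatewise from closedness and convexity of $[0,1]$, and measurability is preserved under convex combinations. For (iii), the integrand of $\mathcal J$ splits as a function linear in the state variables $E,I,A,B$ plus the strictly convex quadratic form $\tfrac12\sum_{i=1}^{4}D_i u_i^2$ with $D_i>0$; hence for every fixed state it is a sum of an affine and a strictly convex function of $u$, and I would verify convexity directly via the second-derivative (Hessian) test, which produces the positive diagonal matrix $\mathrm{diag}(D_1,\dots,D_4)$. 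For (iv), the same quadratic term provides the required lower bound: choosing $a_0=\tfrac12\min_i D_i$, $a_2=2$, and $a_1$ equal to (a bound on) the absolute value of the worst-case linear part over the invariant region gives
\begin{equation*}
A_1 E+A_2 I+A_3 A+A_4 B+\tfrac12\sum_{i=1}^4 D_i u_i^2 \;\ge\; a_0\Bigl(\sum_{i=1}^4|u_i|^2\Bigr)^{a_2/2}-a_1,
\end{equation*}
since the non-negativity of $E,I,A,B$ on the biologically feasible region means the linear piece is actually non-negative and $a_1$ can be taken to be $0$.

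The main obstacle is condition (ii). Here I would first establish positive invariance and boundedness of the state system \eqref{optimalmodelSEIR}: adding the five human equations yields $\tfrac{dN}{dt}\le\Lambda-dN$, so $N(t)\le\max\{N(0),\Lambda/d\}$, and then the $B$-equation gives $\tfrac{dB}{dt}\le(\psi_1+\psi_2+\psi_3)N-\phi B$, producing an analogous bound on $B$. Once the state vector $X=(S,E,I,A,R,B)^\top$ is known to lie in a compact region $\Omega\subset\mathbb{R}_+^6$, I would rewrite the right-hand side of \eqref{optimalmodelSEIR} in the form $F(t,X,u)=M(u)X+b$, where the entries of $M(u)$ and $b$ depend on the bounded controls and the bounded nonlinear factor $S/N\le 1$; on $\Omega\times\mathcal U$ one then obtains an estimate $|F(t,X,u)|\le c_1|X|+c_2$ for suitable constants $c_1,c_2>0$, which is the desired linear majorisation. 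The nonlinearities $\beta_i\,X_j S/N$ are harmless because $S/N$ is bounded above by $1$, so they contribute at most a linear term in the remaining state components.

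Having verified (i)--(iv), I would invoke the cited existence theorem to conclude that an optimal $u^*\in\mathcal U$ realising \eqref{optimalj} exists, and state this as the content of Theorem \ref{theorem-exist}. The characterisation via Pontryagin's maximum principle would then be carried out in a separate step of the paper.
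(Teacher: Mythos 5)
Your proposal is correct and follows essentially the same route as the paper: both verify conditions (i)--(iv) and then invoke the cited Fleming--Rishel-type existence theorem, with (i), (iii) and (iv) handled in the same way. The only divergence is in (ii), where the paper uses the control-affine decomposition $f_0(t,x,u)=f_1(t,x)+f_2(t,x)u$ and bounds $\parallel f_0\parallel\le b_1+b_2\parallel u\parallel$ with explicit constants, whereas you first establish boundedness of the states via $\tfrac{dN}{dt}\le\Lambda-dN$ and then bound linearly in the state with the controls absorbed into constants --- both are valid readings of the condition --- and your choice $a_0=\tfrac12\min_i D_i$ in (iv) is in fact the correct constant (the paper's $\tfrac12\max\left\{D_1,D_2,D_3,D_4\right\}$ does not yield the claimed inequality).
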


\begin{proof}
Let the control set $\mathcal U=[0,u_{\max}]^4, u_{\max}\leq 1$ , $u=(u_1,u_2,u_3,u_4)\in\mathcal U$, $x=(S,E,I,A,R,B)$ and $f_0(t,x,u)$ be the right-hand side of the non-autonomous system \eqref{optimalmodelSEIR} given by
\begin{equation}
\label{rhs}
f_0(t,x,u)=\left(
\begin{array}{c}
\Lambda-(1-u_1)\frac{(\beta_1E+\beta_2I+\beta_3A)S}{N}-(1-u_1-u_2)\frac{\beta_4BS}{N}-dS\\
(1-u_1)\frac{(\beta_1E+\beta_2I+\beta_3A)S}{N}+(1-u_1-u_2)\frac{\beta_4BS}{N}-(\delta+d)E\\
(1-\tau)\delta E-(d+d_1+\gamma_1)I\\
\tau\delta E-(d+\gamma_2)A\\
\gamma_1I+\gamma_2A-dR\\
(1-u_3)\psi_1E+(1-u_3)\psi_2I+(1-u_3)\psi_3A-(u_4+\phi)B
\end{array}
\right).
\end{equation}
Then, we proceed by verifying the four properties presented by Theorem \ref{theorem-exist}.
\begin{enumerate}
\item [(i)] Given the control set $\mathcal U=[0,u_{max}]^4$. Then, by definition, $\mathcal U$ is closed. Further, let $v,w\in\mathcal U$, where $v=(v_1,v_2,v_3,v_4)$ and $w=(w_1,w_2,w_3,w_4)$, be any two arbitrary points. It then follows from the definition of a convex set \cite{rector2005principles}, that
\begin{equation*}
\lambda v+(1-\lambda)w\in[0,u_{max}]^4, \ \mbox{for all} \ \lambda\in[0,u_{max}].
\end{equation*}
Consequently, $\lambda v+(1-\lambda)w\in\mathcal U$, implying the convexity of $\mathcal U$.

\item[(ii)] This property is verified by adopting the ideas of the previous authors \cite{olaniyi2020modelling,romero2018optimal}. Obviously, $f_0(t,x,u)$ in \eqref{rhs} can be written as
\begin{equation*}
f_0(t,x,u)=f_1(t,x)+f_2(t,x)u,
\end{equation*}
where
\begin{equation*}
f_1(t,x)=\left(
\begin{array}{c}
\Lambda-\frac{(\beta_1E+\beta_2I+\beta_3A)S}{N}-\frac{\beta_4BS}{N}-dS\\
\frac{(\beta_1E+\beta_2I+\beta_3A)S}{N}+\frac{\beta_4BS}{N}-(\delta+d)E\\
(1-\tau)\delta E-(d+d_1+\gamma_1)I\\
\tau\delta E-(d+\gamma_2)A\\
\gamma_1I+\gamma_2A-dR\\
\psi_1E+\psi_2I+\psi_3A-\phi B
\end{array}
\right)
\end{equation*}
and
\begin{equation*}
f_2(t,x)=\left(
\begin{array}{cccc}
\frac{(\beta_1E+\beta_2I+\beta_3A)S}{N}+\frac{\beta_4BS}{N} & \frac{\beta_4BS}{N} & 0 & 0\\
-\frac{(\beta_1E+\beta_2I+\beta_3A)S}{N}-\frac{\beta_4BS}{N} & -\frac{\beta_4BS}{N} & 0 & 0\\
0 & 0 & 0 & 0\\
0 & 0 & 0 & 0\\
0 & 0 & 0 & 0\\
0 & 0 & -(\psi_1E+\psi_2I+\psi_3A) & -B
\end{array}
\right).
\end{equation*}
Hence,
\begin{align*}
\parallel f_0(t,x,u)\parallel&\le\parallel f_1(t,x)\parallel+f_2(t,x)\parallel\parallel u\parallel\\
&\le b_1+b_2\parallel u\parallel,
\end{align*}
where $b_1$ and $b_2$ are positive constants given as
\begin{equation*}
b_1=\sqrt{\max\left\{c_1,c_2\right\}\left(\Lambda^2+\Lambda^2(\psi_1+\psi_2+\psi_3)^2\right)},
\end{equation*}
and
\begin{equation*}
b_2=\sqrt{\max\left\{d_1,d_2\right\}\left(\Lambda^2+\Lambda^2(\psi_1+\psi_2+\psi_3)^2\right)},
\end{equation*}
with
\begin{align*}
c_0&=d^2+\beta_3(2\beta_1+2\beta_2+\beta_3)+\left(\beta_1+\beta_2\right)^2+(1-2\tau+2\tau^2)\delta^2+\left(\gamma_1+\gamma_2\right)^2+\psi_3(2\psi_1+2\psi_2+\psi_3)\\
&\quad+(\psi_1+\psi_2)^2,\\
c_1&=\frac{c_0}{d^2}, \\
c_2&=\frac{2\beta_4(\beta_1\phi+\beta_2\phi+\beta_3\phi)+\beta_4^2(\psi_1+\psi_2+\psi_3)}{d^2\phi^2(\psi_1+\psi_2+\psi_3)},\\
d_1&=\frac{2\beta_3(2\beta_1+2\beta_2+\beta_3)+2(\beta_1+\beta_2)^2+(\psi_1+\psi_3)^2+\psi_2(2\psi_1+\psi_2+2\psi_3)}{d^2},\\
d_2&=\frac{4\beta_4\phi(\beta_1+\beta_2+\beta_3)+(1+4\beta_4^2)(\psi_1+\psi_2+\psi_3)}{d^2\phi^2(\psi_1+\psi_2+\psi_3)}.
\end{align*}

\item[(iii)] First note that the objective functional $\mathcal J$ in \eqref{optimalj} has an integrand of the Lagrangian form defined as
\begin{equation}
\label{lang}
\mathcal L(t,x,u)=A_1E+A_2I+A_3A+A_4B=\frac{1}{2}\sum_{i=1}^4D_iu_i^2.
\end{equation}
Let $v=(v_1,v_2,v_3,v_4)\in\mathcal U$, $w=(w_1.w_2,w_3,w_4)\in\mathcal U$ and $\lambda\in[0,u_{max}]$, then it suffices to prove that
\begin{equation}
\label{ineq}
\mathcal L(t,x,(1-\lambda)v+\lambda w)\le(1-\lambda)\mathcal L(t,x,v)+\lambda\mathcal L(t,x,w).
\end{equation}
From \eqref{lang},
\begin{equation}
\label{lang1}
\mathcal L(t,x,(1-\lambda)v+\lambda w)=A_1E+A_2I+A_3A+A_4B+\frac{1}{2}\sum_{i=1}^4\left[D_i((1-\lambda)v_i+\lambda w_i)^2\right],
\end{equation}
and
\begin{equation}
\label{lang2}
(1-\lambda)\mathcal L(t,x,v)+\lambda\mathcal L(t,x,w)=A_1E+A_2I+A_3A+A_4B+\frac{1}{2}(1-\lambda)\sum_{i=1}^4D_iv_i^2+\frac{1}{2}\lambda\sum_{i=1}^4D_iw_i^2.
\end{equation}
Applying the inequality \eqref{ineq} to the results in \eqref{lang1} and \eqref{lang2} leads to
\begin{equation*}
\mathcal L(t,x,(1-\lambda)v+\lambda w)-((1-\lambda)\mathcal L(t,x,v)+\lambda\mathcal L(t,x,w))=\frac{1}{2}(\lambda^2-\lambda)\sum_{i=1}^4D_i(v_i-w_i)^2\le0, \ \mbox{since} \ \lambda\in[0,u_{max}],
\end{equation*}
implying that the integrand $\mathcal L(t,x,u)$ of the objective functional $\mathcal J$ is convex.
\item [(iv)] Lastly, the fourth property is verified as follows:
\begin{align*}
\mathcal L(t,x,u)&=A_1E+A_2I+A_3A+A_4B=\frac{1}{2}\sum_{i=1}^4D_iu_i^2,\\
&\ge\frac{1}{2}\sum_{i=1}^4D_iu_i^2,\\
&\ge a_0\left(|u_1|^2+|u_2|^2+|u_3|^2+|u_4|^2\right)^\frac{a_2}{2}-a_1,
\end{align*}
where $a_0=\frac{1}{2}\max\left\{D_1,D_2,D_3,D_4\right\}$, $a_1>0$ and $a_2=2$.
\end{enumerate}
\end{proof}

\subsection{Characterization of the optimal controls}

Pontryagin's maximum principle (PMP) provides the necessary conditions that an optimal control quadruple must satisfy. This principle converts the optimal control problem consisting of the non-autonomous system \eqref{optimalmodelSEIR} and the objective functional $\mathcal J$ in \eqref{objectivefunctional} into an issue of minimizing pointwise a
Hamiltonian, denoted as $\mathcal H$, with respect to controls $u=(u_1,u_2,u_3,u_4)$. First, the Hamiltonian $\mathcal H$ associated with the optimal control problem is formulated as
\begin{align}
	\label{eqnhamiltonianSIRD}
	\mathcal{H}&=A_{1}E+A_{2}I+A_3 A+A_4 B+ \dfrac{1}{2}\bigg(D_{1}u^{2}_{1}+D_{2}u^{2}_{2}+D_{3}u^{2}_{3}+D_{4}u^{2}_{4}\bigg)\notag\\
	&+\lambda_{1}\bigg[\Lambda-(1-u_1)\big(\beta_{1}E+\beta_{2}I+\beta_{3}A\big)\frac{S}{N}-(1-u_1-u_2)\beta_{4}B\frac{S}{N}-dS\bigg]\notag\\
	&+\lambda_{2}\bigg[(1-u_1)\big(\beta_{1}E+\beta_{2}I+\beta_{3}A\big)\frac{S}{N}+(1-u_1-u_2)\beta_{4}B\frac{S}{N}-(\delta+d)E\bigg]\notag\\
	&+\lambda_{3}\big[(1-\tau)\delta E-(d+d_1+\gamma_{1})I\big]\\
	&+\lambda_{4}\big[\tau \delta E-(d+\gamma_{2})A \big]\notag\\
	&+\lambda_{5}\big[\gamma_{1}I+ \gamma_{2}A-dR\big]\notag\\
	&+\lambda_{6}\big[(1-u_3)\psi_{1}E +(1-u_3)\psi_{2}I+(1-u_3)\psi_{3}A-(u_4+\phi) B\big]\notag,
\end{align}
where $\lambda_i$ (with $i=1,2,\ldots,6$) are the adjoint variables corresponding to the state variables $S$, $E$, $I$, $A$, $R$ and $B$ respectively.

\begin{theorem}
If $u^*=(u_i^*), \ i=1,\ldots,4$ is an optimal control quadruple and $S^*$, $E^*$, $I^*$, $A^*$, $R^*$, $B^*$ are the solutions of the corresponding state system \eqref{optimalmodelSEIR} that minimizes $\mathcal J(u^*)$ over the control set $\mathcal U$ defined by \eqref{u}, then there exist adjoint variables $\lambda_i \ (i=1,2,\ldots,6)$ satisfying
\begin{align}\label{adjointequation}
\dfrac{d\lambda_{1}}{dt}&=(\lambda_{1}-\lambda_{2})\bigg[(1-u_1)\big( \beta_{1}{E^*}+\beta_{2}{I^*}+\beta_{3}{A^*}\big)\bigg]\frac{({E^*}+{I^*}+{A^*}+{R^*})}{N^2}\notag\\ &+(\lambda_{1}-\lambda_{2})(1-u_1-u_2)\beta_{4}{B^*}\frac{({E^*}+{I^*}+{A^*}+{R^*})}{N^2}+\lambda_{1}d,\notag\\
\dfrac{d\lambda_{2}}{dt}&=-A_1+(\lambda_{1}-\lambda_{2})(1-u_1){S^*}\bigg[\dfrac{({S^*}+{I^*}+{A^*}+{R^*})\beta_{1}-(\beta_{2}{I^*}+\beta_{3}{A^*})}{N^2}\bigg]\notag\\
&+(\lambda_{2}-\lambda_{1})(1-u_1-u_2)\beta_{4}{B^*}\frac{{S^*}}{N^2}+(\delta+d)\lambda_{2}-\lambda_{2}(1-\tau)\delta\lambda_{3}-\tau\delta\lambda_{4}-(1-u_3)\psi_{1}\lambda_{6},\\
\dfrac{d\lambda_{3}}{dt}&=-A_2+(\lambda_{1}-\lambda_{2})(1-u_1){S^*}\bigg[\dfrac{({S^*}+{E^*}+{A^*}+{R^*})\beta_{2}-(\beta_{1}{E^*}+\beta_{3}{A^*})}{N^2}\bigg]\notag\\
&+(\lambda_{2}-\lambda_{1})(1-u_1-u_2)\beta_{4}{B^*}\frac{{S^*}}{N^2}+(\delta+d_1+\gamma_{1})\lambda_{3}-\gamma_{1}\lambda_{5}-(1-u_3)\psi_{2}\lambda_{6},\notag\\
\dfrac{d\lambda_{4}}{dt}&=-A_3+(\lambda_{1}-\lambda_{2})(1-u_1){S^*}\bigg[\dfrac{({S^*}+{E^*}+{I^*}+{R^*})\beta_{3}-(\beta_{1}{E^*}+\beta_{2}{I^*})}{N^2}\bigg]\notag\\
&+(\lambda_{2}-\lambda_{1})(1-u_1-u_2)\beta_{4}{B^*}\frac{{S^*}}{N^2}+(d_1+\gamma_{2})\lambda_{4}-\gamma_{2}\lambda_{5}-(1-u_3)\psi_{3}\lambda_{6},\notag\\
\dfrac{d\lambda_{5}}{dt}&=(\lambda_{2}-\lambda_{1})(1-u_1)\bigg(\beta_{1}{E^*}+\beta_{2}{I^*}+\beta_{3}{A^*}\bigg)\frac{{S^*}}{N^2} +(\lambda_{2}-\lambda_{1})(1-u_1-u_2)\beta_{4}\frac{{S^*}}{N^2}+\lambda_{5}d, \notag\\
\dfrac{d\lambda_{6}}{dt}&=-A_4+(\lambda_{1}-\lambda_{2})(1-u_1-u_2)\beta_{4}\frac{{S^*}}{N} +(u_4+\phi)\lambda_{6}d .\notag
\end{align}
with transversality conditions

\begin{equation}
\label{trans}
\lambda_i(T)=0, \quad \ \mbox{for} \ i=1,2,\ldots,6
\end{equation}

and
\begin{equation}\label{OPTIMALcontFUNCTIONS}
\begin{cases}
u^{*}_{1} (t)=\min\Bigg\{\max\Bigg\{0, \dfrac{(\lambda_{2}-\lambda_{1})(\beta_{1}{E^*}+\beta_{2}I^*+\beta_{3}A^*+\beta_{4}B^*)S}{D_{1}N}\Bigg\},u_{1_{max}} \Bigg\},\\
u^{*}_{2} (t)=\min\Bigg\{\max\Bigg\{0, \dfrac{(\lambda_{2}-\lambda_{1})\beta_{4}{B^*}S^*}{D_{2}N}\Bigg\}, u_{2_{max}}\Bigg\},\\
u^{*}_{3} (t)=\min\Bigg\{\max\Bigg\{0, \dfrac{\lambda_6 (\psi_{1}E^*+\psi_{2}I^*+\psi_{3}A^*)}{D_{3}}\Bigg\},u_{3_{max}} \Bigg\},\\
u^{*}_{4} (t)=\min\Bigg\{\max\Bigg\{0, \dfrac{\phi {B^*}\lambda_6}{D_4}\Bigg\},u_{4_{\max}}\Bigg\}.
\end{cases}
\end{equation}
\end{theorem}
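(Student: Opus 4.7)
The plan is to invoke Pontryagin's Maximum Principle (PMP), whose applicability is guaranteed by the existence result in Theorem \ref{theorem-exist}. With the Hamiltonian $\mathcal{H}$ already assembled in \eqref{eqnhamiltonianSIRD}, PMP supplies three pieces that need to be verified in turn: the adjoint dynamics $\dot\lambda_i=-\partial\mathcal{H}/\partial x_i$ evaluated along the optimal trajectory, the transversality conditions at $t=T$, and the pointwise minimality conditions on the controls. The first step, then, is to write down each $-\partial\mathcal{H}/\partial x_i$ carefully, substituting $(S^*,E^*,I^*,A^*,R^*,B^*)$ for the states.

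The principal computational care is that $N=S+E+I+A+R$ depends on five of the six state variables, so the mass-action-like terms $(\beta_1E+\beta_2I+\beta_3A)S/N$ and $\beta_4BS/N$ must be differentiated with the quotient rule. For instance, $\partial(\beta_1ES/N)/\partial S=\beta_1E(E+I+A+R)/N^{2}$, which accounts for the factor $(E^*+I^*+A^*+R^*)/N^{2}$ that multiplies $(\lambda_1-\lambda_2)$ in $\dot\lambda_1$; similarly $\partial(\beta_1ES/N)/\partial E=\beta_1S(S+I+A+R)/N^{2}$ while $\partial(\beta_2IS/N)/\partial E=-\beta_2IS/N^{2}$, which combine to give the bracketed $(S^*+I^*+A^*+R^*)\beta_1-(\beta_2I^*+\beta_3A^*)$ expression in $\dot\lambda_2$, and the analogous blocks in $\dot\lambda_3$ and $\dot\lambda_4$. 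The linear outflow and inflow terms of the state equations contribute the expected $d\lambda_i$, $(\delta+d)\lambda_i$, $(u_4+\phi)\lambda_6$, etc., while the running-cost terms $A_1E,\,A_2I,\,A_3A,\,A_4B$ yield the $-A_j$ constants on the right-hand sides of $\dot\lambda_2,\dot\lambda_3,\dot\lambda_4,\dot\lambda_6$. Since $\mathcal{J}$ in \eqref{objectivefunctional} carries no terminal payoff on the final state, PMP gives the free endpoint transversality conditions \eqref{trans}.

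For the characterization of the controls, I would apply the interior first-order condition $\partial\mathcal{H}/\partial u_i=0$ on the interior of $\mathcal{U}$; the strict convexity of $\mathcal{H}$ in $u$ (each $u_i$ enters only through the quadratic $\tfrac{1}{2}D_i u_i^{2}$ plus linear terms) guarantees that this stationary point is the pointwise minimizer. Solving $D_1u_1+(\lambda_1-\lambda_2)[(\beta_1E^*+\beta_2I^*+\beta_3A^*)+\beta_4B^*]S^*/N=0$ yields the unconstrained form of $u_1^*$, and analogous one-line computations give $u_2^*,u_3^*,u_4^*$. Since the admissible set enforces $0\le u_i\le u_{i_{\max}}$, the unconstrained minimizers are projected onto $[0,u_{i_{\max}}]$ via the standard case distinction, producing the $\min\{\max\{0,\cdot\},u_{i_{\max}}\}$ expressions displayed in \eqref{OPTIMALcontFUNCTIONS}.

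The main obstacle I anticipate is purely bookkeeping: the nonlinear $S/N$ incidence generates several cross-terms when differentiated with respect to $E,I,A,R$, and the signs of the coefficients of $\lambda_1-\lambda_2$ versus $\lambda_2-\lambda_1$ must be tracked so that the $S(S+\cdot+\cdot+R)/N^{2}$ pieces separate cleanly from the $\beta_jI/N^{2}$ and $\beta_jA/N^{2}$ corrections. Once that bookkeeping is done correctly, the adjoint system \eqref{adjointequation}, the transversality \eqref{trans}, and the characterization \eqref{OPTIMALcontFUNCTIONS} all fall out of standard PMP with no further obstruction.
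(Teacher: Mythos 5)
Your proposal follows exactly the route the paper takes: invoke Pontryagin's maximum principle with the Hamiltonian \eqref{eqnhamiltonianSIRD}, obtain the adjoint system from $\dot\lambda_i=-\partial\mathcal H/\partial x_i$ with free-endpoint transversality $\lambda_i(T)=0$, and characterize the controls by solving $\partial\mathcal H/\partial u_i=0$ and projecting onto the admissible bounds. Your treatment of the quotient-rule bookkeeping for the $S/N$ incidence terms is in fact more explicit than the paper's own proof, which merely states the recipe; there is no gap.
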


\begin{proof}
The form of the adjoint system and the transversality conditions associated with this optimal control problem follows the widely used standard results obtained from work done by Pontryagin et al. \cite{pontryagin1962mathematical}. For this purpose, we partially differentiate the formulated Hamiltonian function~(\ref{eqnhamiltonianSIRD}) with respect to $S, E, I, A, R$ and $B$ as follows;

\begin{equation}\label{COVIDy2k}
\begin{cases}
\frac{d\lambda_{1}}{dt}=-\frac{\partial \mathcal{H}}{\partial S}, \qquad \lambda_1(T)=0,\\
\frac{d\lambda_{2}}{dt}=-\frac{\partial \mathcal{H}}{\partial E},  \qquad \lambda_2(T)=0,\\
\frac{d\lambda_{3}}{dt}=-\frac{\partial \mathcal{H}}{\partial I},  \qquad \lambda_3(T)=0,\\
\frac{d\lambda_{4}}{dt}=-\frac{\partial \mathcal{H}}{\partial A},  \qquad \lambda_4(T)=0,\\
\frac{d\lambda_{3}}{dt}=-\frac{\partial \mathcal{H}}{\partial R}, \qquad \lambda_5(T)=0, \\
\frac{d\lambda_{4}}{dt}=-\frac{\partial \mathcal{H}}{\partial B}, \qquad \lambda_6(T)=0.
\end{cases}
\end{equation}

Finally, to obtain the desired results for the characterizations of the optimal
control, we need to partially differentiate the Hamiltonian function~(\ref{eqnhamiltonianSIRD}) with respect to the four time-dependent control functions ($u_1, u_2, u_3, u_4$), thus, further, the optimal control characterization in \eqref{OPTIMALcontFUNCTIONS} is obtained by solving
\begin{equation*}
\frac{\partial\mathcal H}{\partial u_i}=0, \qquad \mbox{for} \ u_i^* \qquad \mbox{(where $i=1,2,3,4$)}.
\end{equation*}

Lastly, it follows from standard control arguments involving bounds on the control that
\begin{equation*}
u_i^*=\begin{cases}
0 &\quad \mbox{if} \quad \theta_i^*\le0,\\
\theta_i^* &\quad \mbox{if} \quad 0\le\theta_i^*\le u_{\max},\\
 u_{\max} &\quad \mbox{if} \quad \theta_i^*\ge  u_{\max},
\end{cases}
\end{equation*}	
where $i=1,2,3,4$ and with
\begin{align*}
	\theta_1&=\frac{(\lambda_{2}-\lambda_{1})(\beta_{1}E+\beta_{2}I+\beta_{3}A+\beta_{4}B)S}{D_{1}N},\\
	\theta_2&=\frac{(\lambda_{2}-\lambda_{1})\beta_{4}BS}{D_{2}N},\\
	\theta_3&=\frac{\lambda_6 (\psi_{1}E+\psi_{2}I+\psi_{3}A)}{D_{3}},\\
	\theta_4&=\frac{\phi B\lambda_6}{D_4}.
\end{align*}

\end{proof}

%%%%%%%%%%%%%%%%%%%%%%%%%%%%%%%%%%%%%%%%%%%%%%%%%%%%%%%%%%%%%%%%%%%%%%%%%%%%%%%%%%%
%\newpage
\section{Numerical simulation and cost-effectiveness analysis}\label{ssec3}

\subsection{Numerical simulation}
\label{sec-numericsim}
Numerical simulations are vital in dynamical modelling; they give the proposed model's pictorial view to the theoretical analysis. Hence, we provide the numerical outcomes of our study by simulating 14 possible strategic combinations of the control measures. This is done by simulating the constraint system \eqref{optimalmodelSEIR} froward in time and the adjoint system \eqref{eqnhamiltonianSIRD} backward in time until convergence is reached. The model parameters can be found in \cite{alqarni2020mathematical}, but restated here for easy reference see Table \ref{TW}.
 This simulation procedure is popularly known as fourth-order Runge-Kutta forward-backward sweep simulations. The 14 possible strategic combination strategies are divided into four scenarios, thus, the implementation of single control (Scenario A), the use of dual controls (Scenario B), the performance of triple controls (Scenario C) and lastly, the implementation of quadruplet control measures (Scenario D). Iterated below as
\begin{itemize}
	\item[$\spadesuit$] Scenario A (implementation of single control)
	\begin{itemize}
		\item[$\maltese$] Strategy 1: practising physical or social distancing protocols only\\
		$(u_1\neq 0, u_2=u_3=u_4=0).$
		\item[$\maltese$] Strategy 2: practising personal hygiene by cleaning contaminated surfaces with alcohol based detergents only ($u_1=0, u_2\neq0, u_3=u_4=0).$
		\item[$\maltese$] Strategy 3: practising proper and safety measures by exposed, asymptomatic infected and symptomatic infected individuals only \\
		$(u_1=0, u_2=0, u_3\neq 0, u_4=0).$
		\item[$\maltese$] Strategy 4: Fumigating schools in all levels of education, sports facilities, commercial areas and religious worship centres only $(u_1= 0, u_2=0,u_3=0,u_4\neq0).$
	\end{itemize}
\end{itemize}
\begin{itemize}
	\item[$\spadesuit$] Scenario B (the use of double controls)
	\begin{itemize}
		\item[$\maltese$] Strategy 5: practising physical or social distancing protocols + practising personal hygiene by cleaning contaminated surfaces with alcohol based detergents  \\
		$(u_1, u_2\neq 0, u_3=u_4=0).$
		\item[$\maltese$] Strategy 6: practising physical or social distancing protocols +  practising proper and safety measures by exposed, asymptomatic infected and symptomatic infected individuals ($u_1, u_3\neq0, u_2=u_4=0).$
		\item[$\maltese$] Strategy 7: practising physical or social distancing protocols + fumigating schools in all levels of education, sports facilities, commercial areas and religious worship centres
		$(u_1, u_4\neq 0, u_2=0, u_3=0).$
		\item[$\maltese$] Strategy 8: practising personal hygiene by cleaning contaminated surfaces with alcohol based detergents  +  practising proper and safety measures by exposed, asymptomatic infected and symptomatic infected individuals
		$(u_2, u_3\neq0 ,u_1=0,u_4=0).$
		\item[$\maltese$] Strategy 9: practising personal hygiene by cleaning contaminated surfaces with alcohol based detergents  +  fumigating schools in all levels of education, sports facilities, commercial areas and religious worship centres
		$(u_2, u_4\neq0 ,u_1=0,u_3=0).$
		\item[$\maltese$] Strategy 10: practising proper and safety measures by exposed, asymptomatic infected and symptomatic infected individuals  +  fumigating schools in all levels of education, sports facilities, commercial areas and religious worship centres
		$(u_3, u_4\neq0 ,u_1=0,u_2=0).$
	\end{itemize}
\end{itemize}
\begin{itemize}
	\item[$\spadesuit$] Scenario C (the use of triple controls)
	\begin{itemize}
		\item[$\maltese$] Strategy 11: practising physical or social distancing protocols + practising personal hygiene by cleaning contaminated surfaces with alcohol based detergents + practising proper and safety measures by exposed, asymptomatic infected and symptomatic infected individuals
		$(u_1, u_2,u_3\neq 0, u_4=0).$
		\item[$\maltese$] Strategy 12: practising physical or social distancing protocols +  practising personal hygiene by cleaning contaminated surfaces with alcohol based detergents +  fumigating schools in all levels of education, sports facilities, commercial areas and religious worship centres ($u_1, u_2, u_4\neq0, u_3=0).$
		\item[$\maltese$] Strategy 13: practising personal hygiene by cleaning contaminated surfaces with alcohol based detergents + practising proper and safety measures by exposed, asymptomatic infected and symptomatic infected individuals + fumigating schools in all levels of education, sports facilities, commercial areas and religious worship centres
		$(u_2, u_3, u_4\neq 0, u_1=0).$
	\end{itemize}
\end{itemize}
\begin{itemize}
	\item[$\spadesuit$] Scenario D (implementation of quadruplet)
	\begin{itemize}
		\item[$\maltese$] Strategy 14: practising physical or social distancing protocols + practising personal hygiene by cleaning contaminated surfaces with alcohol based detergents + practising proper and safety measures by exposed, asymptomatic infected and symptomatic infected individuals +  fumigating schools in all levels of education, sports facilities, commercial areas and religious worship centres
		$(u_1, u_2,u_3, u_4\neq 0).$
	\end{itemize}
\end{itemize}
\begin{center}
\begin{table}[h!]\center\caption{Model's parameter descriptions and values. \label{TW}}
\resizebox{15cm}{!}{\setlength{\tabcolsep}{11pt}
\renewcommand{\arraystretch}{1} \begin{tabular}{c l c c}
 \toprule
  % after \\: \hline or \cline{col1-col2} \cline{col3-col4} ...
 \bf{Parameter}& \bf{Definition}&\bf{Value} &\bf{Source}  \\
 \toprule
$\Lambda$ & Recruitment rate & $d \times N(0)$& \cite{alqarni2020mathematical}\\
$d$& Natural mortality rate& $\frac{1}{74.87\times 365}$ &\cite{alqarni2020mathematical}\\
$\beta_1$ &Contact rate among exposed and susceptible& 0.1233 &\cite{alqarni2020mathematical}\\
$\beta_2$& Contact rate among infected (symptomatic) and susceptible& 0.0542 &\cite{alqarni2020mathematical}\\
$\beta_3$ &Contact rate among infected (asymptomatic) and susceptible &0.0020& \cite{alqarni2020mathematical}\\
$\beta_4$ &Contact rate among environment and susceptible& 0.1101 &\cite{alqarni2020mathematical}\\
$\delta$ &Incubation period &0.1980 &\cite{alqarni2020mathematical}\\
$\tau$ & fraction that transient to $A$ &0.3085& \cite{alqarni2020mathematical}\\
$d_1$ &Natural death rate due to Infection at I &0.0104 &\cite{alqarni2020mathematical}\\
$\gamma_1$& Recovery from I &0.3680 &\cite{alqarni2020mathematical}\\
$\gamma_2$& Recovery from A &0.2945 &\cite{alqarni2020mathematical}\\
$\psi_1$& Virus contribution due to E to B& 0.2574 &\cite{alqarni2020mathematical}\\
$\psi_2$ &Virus contribution due to I to B & 0.2798 &\cite{alqarni2020mathematical}\\
$\psi_3$ &Virus contribution due to A to B &0.1584 &\cite{alqarni2020mathematical}\\
$\phi$ &Virus removal from environment& 0.3820& \cite{alqarni2020mathematical}\\
\bottomrule
\end{tabular}}
\end{table}
\end{center}
\subsubsection{Scenario A: use of single control}
\begin{figure}[h!]
	\centering
	\begin{subfigure}{.5\textwidth}
		\centering
		\includegraphics[width=1\linewidth, height=2in]{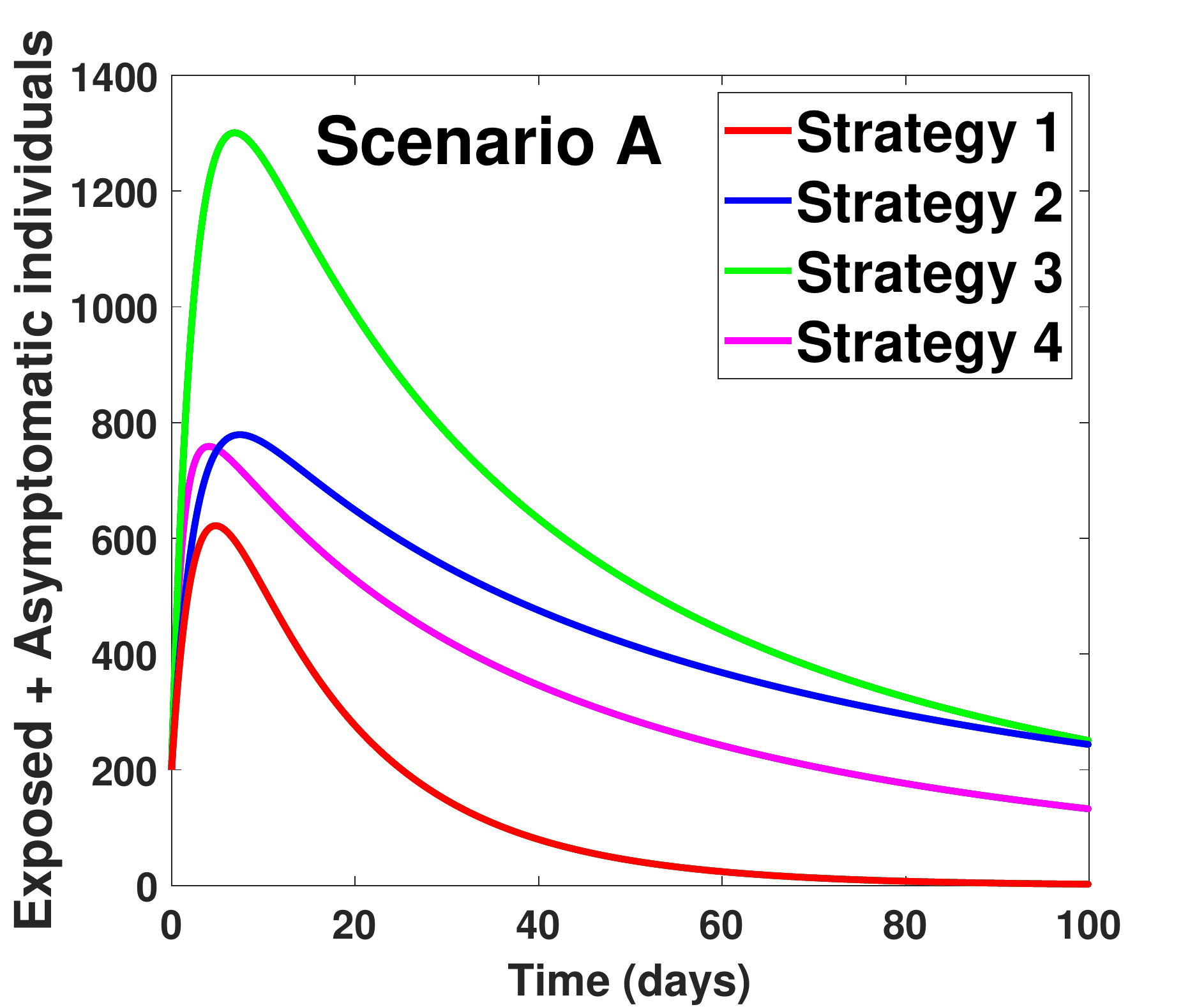}
		\caption{}\label{SA1}
	\end{subfigure}%
	\begin{subfigure}{.5\textwidth}
		\centering
		\includegraphics[width=1\linewidth, height=2in]{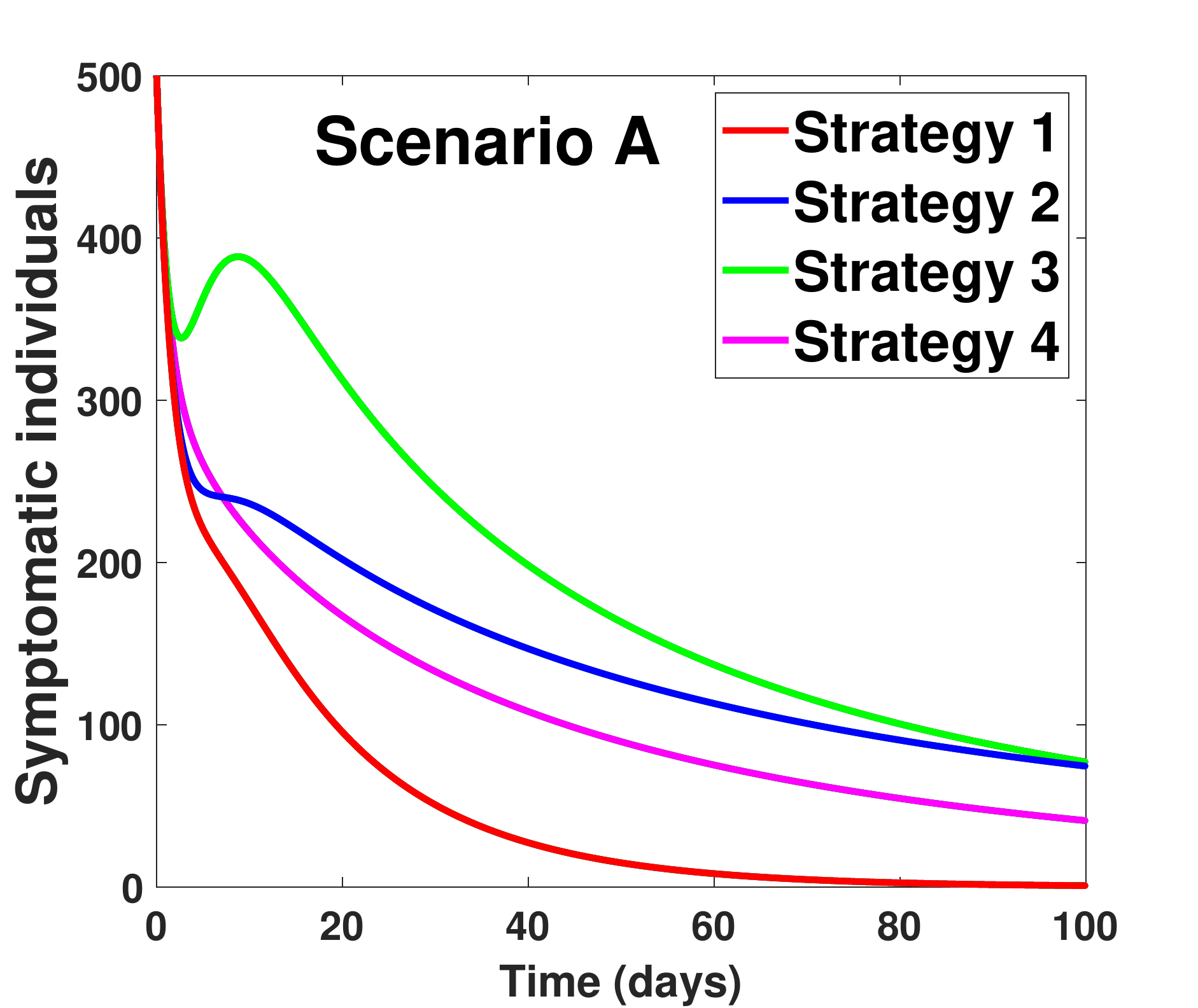}
		\caption{}\label{SA2}
	\end{subfigure}
	\begin{subfigure}{.5\textwidth}
		\centering
		\includegraphics[width=1\linewidth,  height=2in]{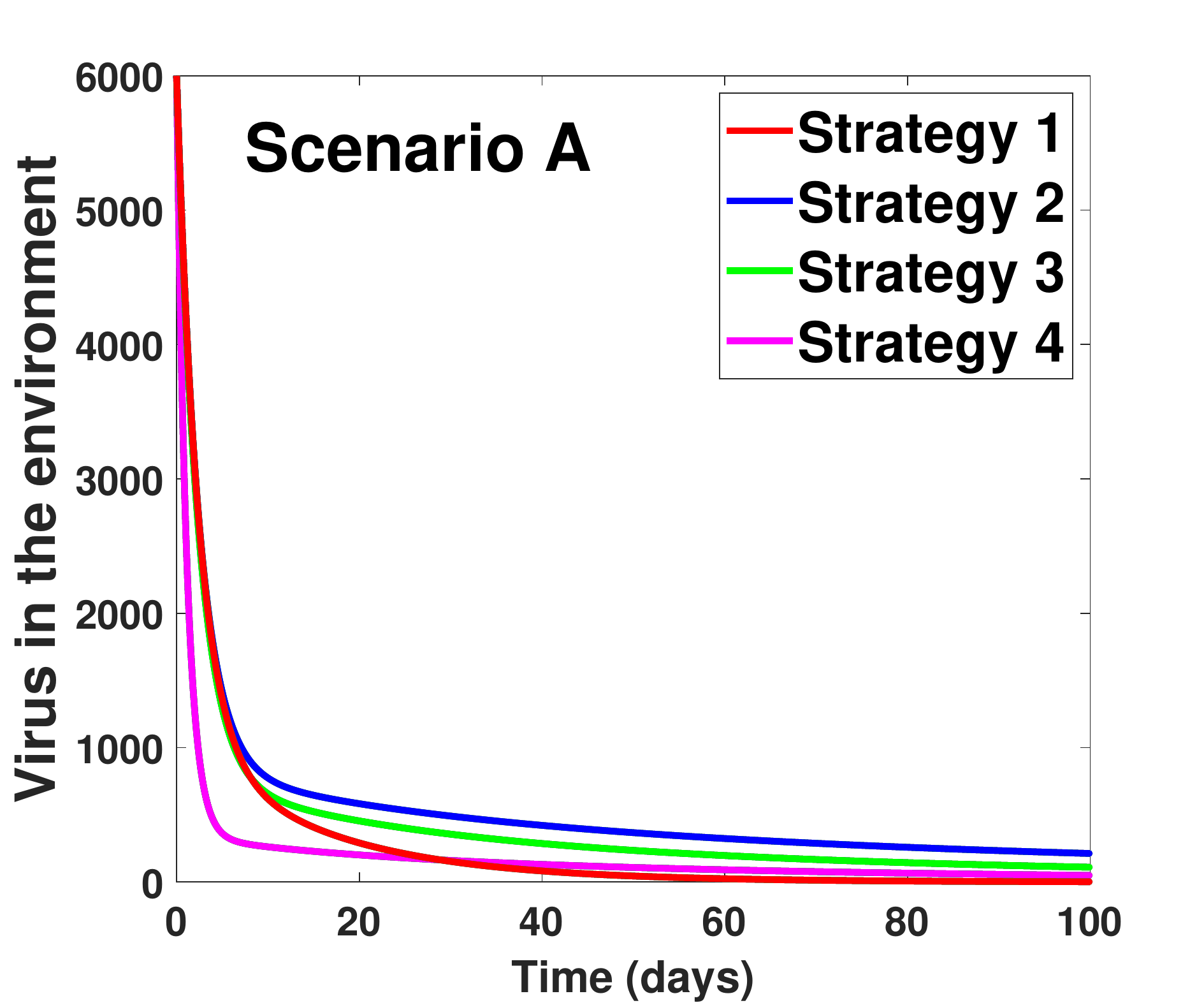}
		\caption{}\label{SA3}
	\end{subfigure}%
	\begin{subfigure}{.5\textwidth}
		\centering
		\includegraphics[width=1\linewidth, height=2in ]{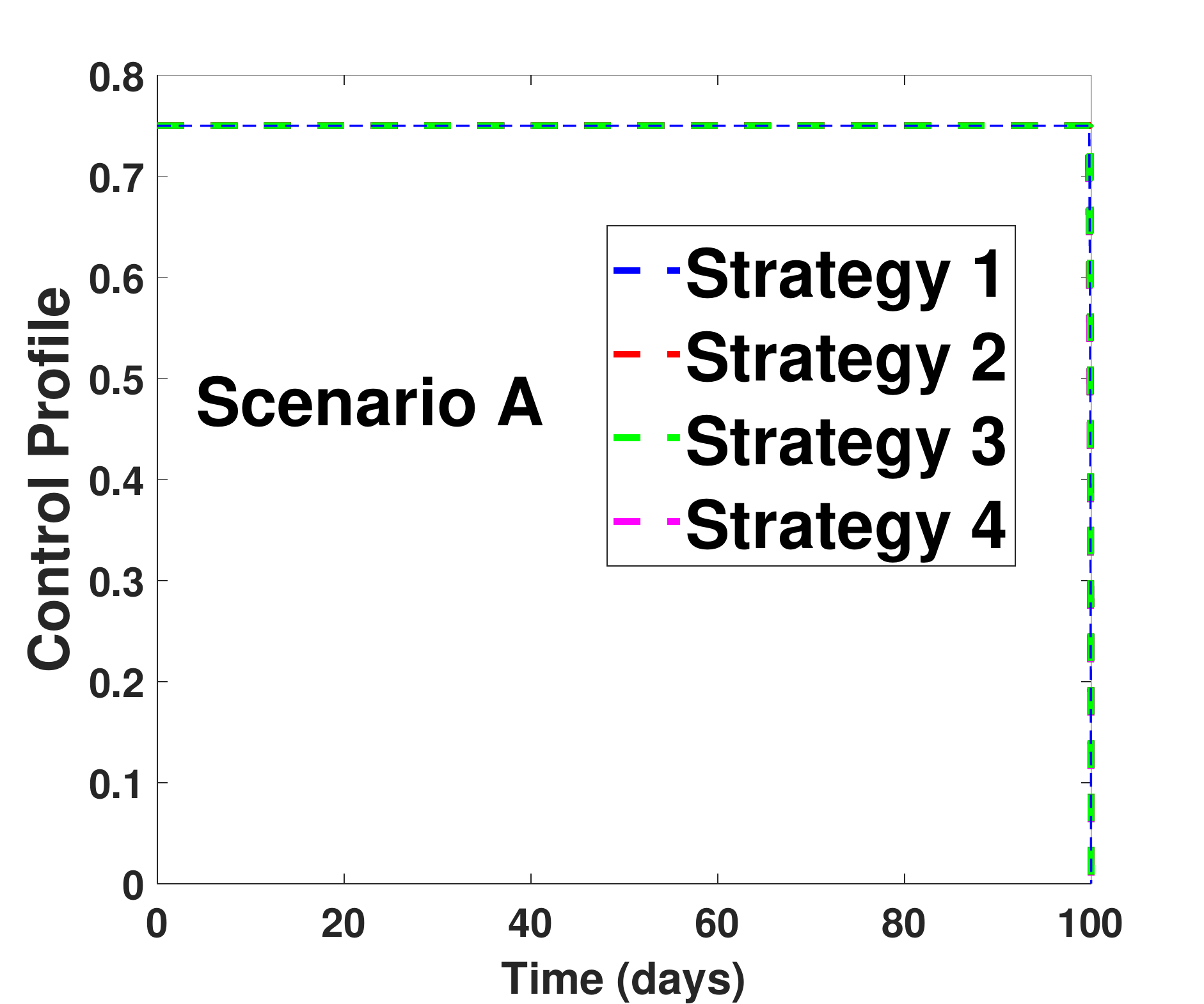}
		\caption{Control profile}\label{SA4}
	\end{subfigure}
	\begin{subfigure}{.5\textwidth}
		\centering
		\includegraphics[width=1\linewidth,  height=2in]{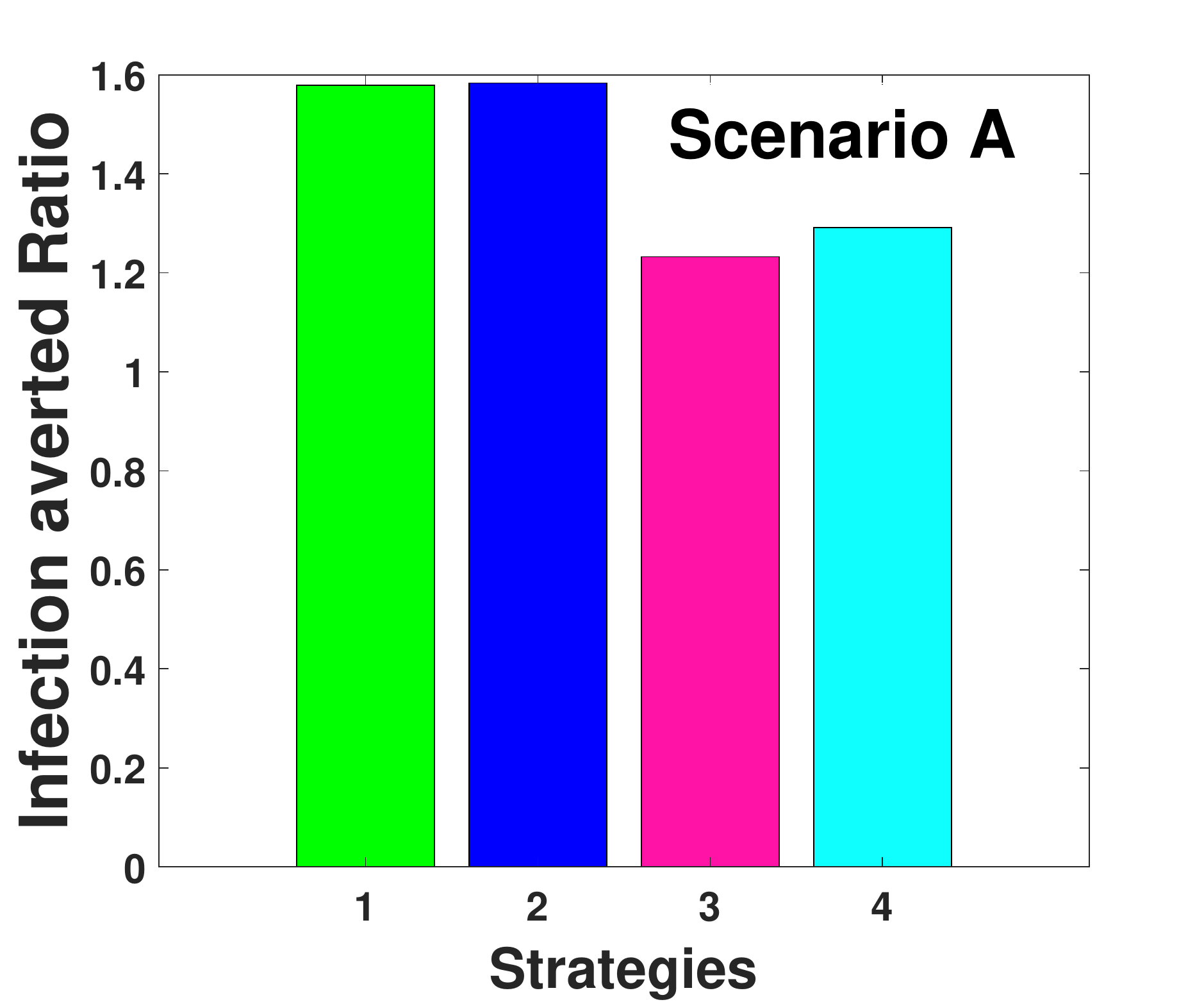}
		\caption{Infection Averted Ratio}\label{SA5}
	\end{subfigure}%
	\begin{subfigure}{.5\textwidth}
		\centering
		\includegraphics[width=1\linewidth, height=2in ]{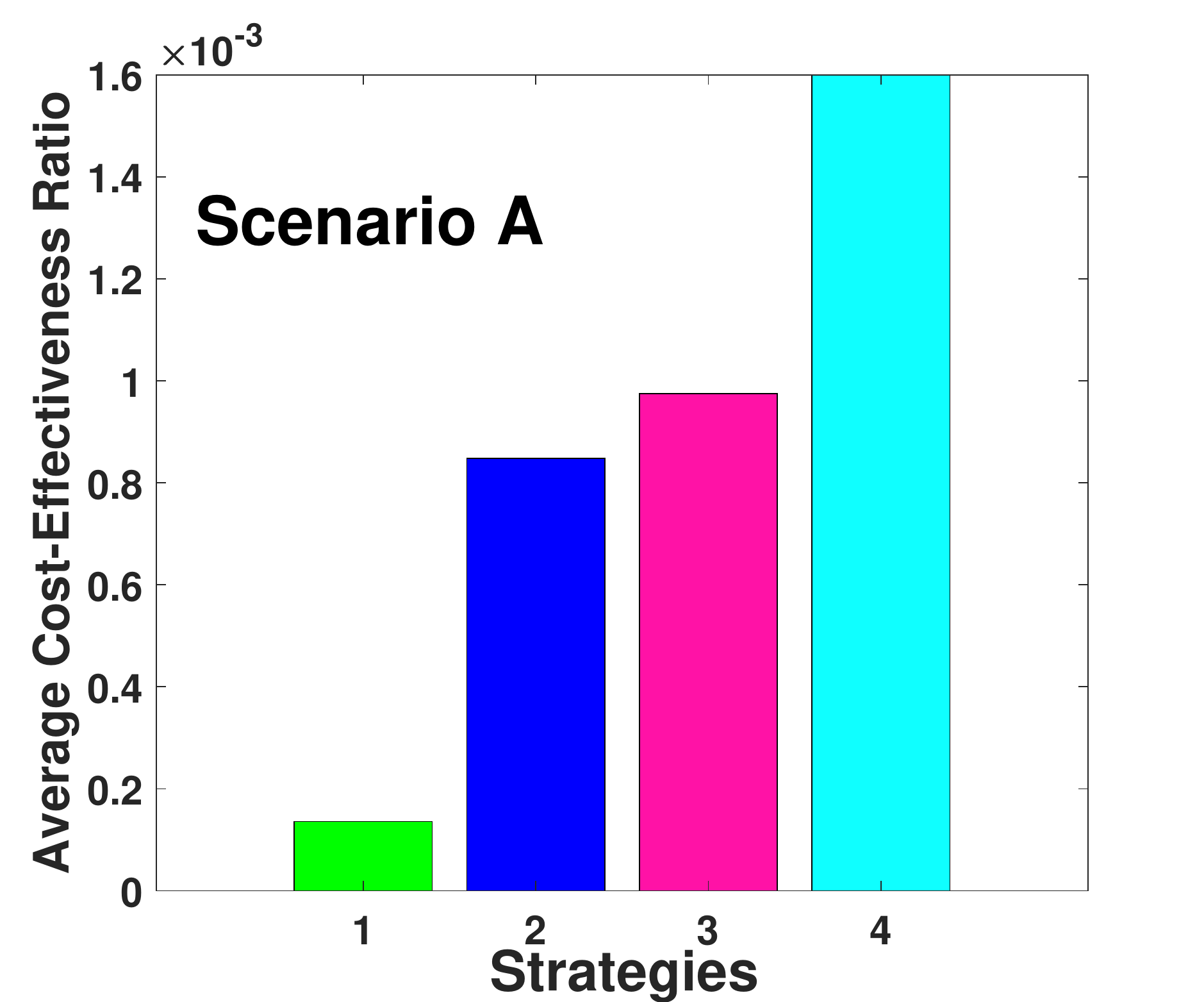}
		\caption{Average Cost-Effectiveness Ratio}\label{SA6}
	\end{subfigure}
	\caption{ Single control strategy.}\label{Singlecontrol}
\end{figure}
In Figure \ref{SA1}, we noticed that strategy 1 has the highest number of exposed and asymptomatic averted individuals, followed by strategy 4, strategy 3, and then strategy 1. Likewise, in Figure \ref{SA2} we noticed that the dynamical importance of the strategy is of equal usefulness on the number of symptomatic individuals. In Figure \ref{SA3}, we saw that the strategy with the highest number of virus removal from the environment is strategy 4, with strategy 2 having the lowest viral removal effect. In Figure \ref{SA4}, the control profiles suggest that the optimal strategy for scenario A should be implemented on the same control level. Figure \ref{SA5} shows the infection averted ratio of the various control strategies. It shows that strategy 2 (practising personal hygiene by cleaning contaminated surfaces with alcohol-based
detergents only), is the most effective strategy if health officials stick to scenario A only in controlling COVID-19 in the Kingdom of Saudi Arabia. Figure \ref{SA6} shows the average cost-effectiveness ratio, which also supports that strategy 1 is the most effective and cost-saving strategy in scenario A.  The mathematical extraction of the infection averted ratio and the average cost-effectiveness ratio can be found in subsection \ref{costeffective} where we validate the claim on Figure \ref{SA5} and \ref{SA6} respectively.
\subsubsection{Scenario B: use of double controls}
In Figure \ref{SB1}-\ref{SB4}, we carried out numerical simulations with the notion that an individual may apply two of the suggested controls simultaneously. We noticed in Figure \ref{SB1} that strategy 5 has the highest number of exposed and asymptomatic averted individuals, in the long run, followed by strategy 6, strategy 7, strategy 9, strategy 10 and then strategy 8. Likewise, in Figure \ref{SB2} we noticed that the dynamical importance of the strategy is of equal usefulness on the number of symptomatic individuals. In Figure \ref{SB3}, we saw that the strategy with the highest number of virus removal from the environment is strategy 7 and 9, with strategy 8 having the most minimal viral removal effect. In Figure \ref{SB4}, the control profiles suggest that the optimal strategy for scenario B should be implemented on a control level of 0.75 for each control term in strategy 5 and 8 for the entire simulation period. For the control strategy 9 in Figure \ref{SB4}, we noticed that the control terms in strategy 9, should be kept at 0.75 for 95 days and then reduced to 0.5 for each of the control terms for the rest of the simulation time. The control profile for strategy 10 shows that each control term should be kept for 0.75 for 92 days and then reduced to 0.5 for the rest of the simulation period. The control profile for strategy 6 shows that, with the combined effort of the two controls, the strategy control level should be kept at 1.5 for 65 days and then gradually reduced to 0.98 for the entire simulation time. We also noticed in Figure \ref{SB4} that the control profile of strategy 7 shows that the control level for the two controls in strategy 7 should be kept at 1.5, thus 0.75 each for 41 days and then gradually reduced to 1 for the entire simulation time. Figure \ref{SB5} shows the infection averted ratio of the various control strategies. It shows that strategy 5 is the most effective when one uses the infection averted ratio (IAR). Figure \ref{SB6} shows the average cost-effectiveness ratio, which indicates that strategy 6 is the most effective and cost-saving strategy in scenario B. The mathematical extraction of the infection averted ratio and the average cost-effectiveness ratio can be found in subsection \ref{costeffective} where we validate the claim on Figure \ref{SB5} and \ref{SB6} respectively.
\begin{figure}[h!]
	\centering
	\begin{subfigure}{.5\textwidth}
		\centering
		\includegraphics[width=1\linewidth, height=2in]{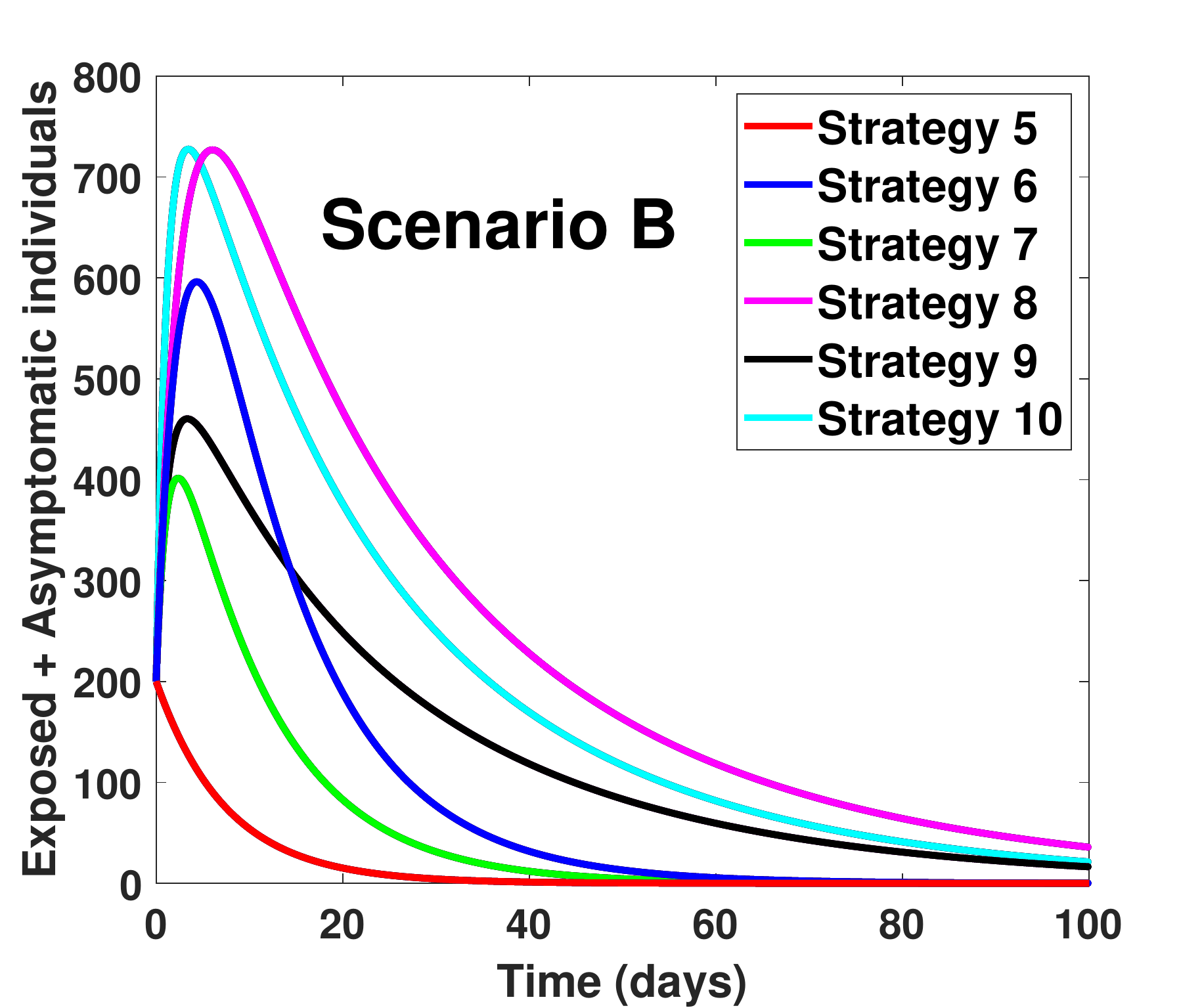}
		\caption{}\label{SB1}
	\end{subfigure}%
	\begin{subfigure}{.5\textwidth}
		\centering
		\includegraphics[width=1\linewidth, height=2in]{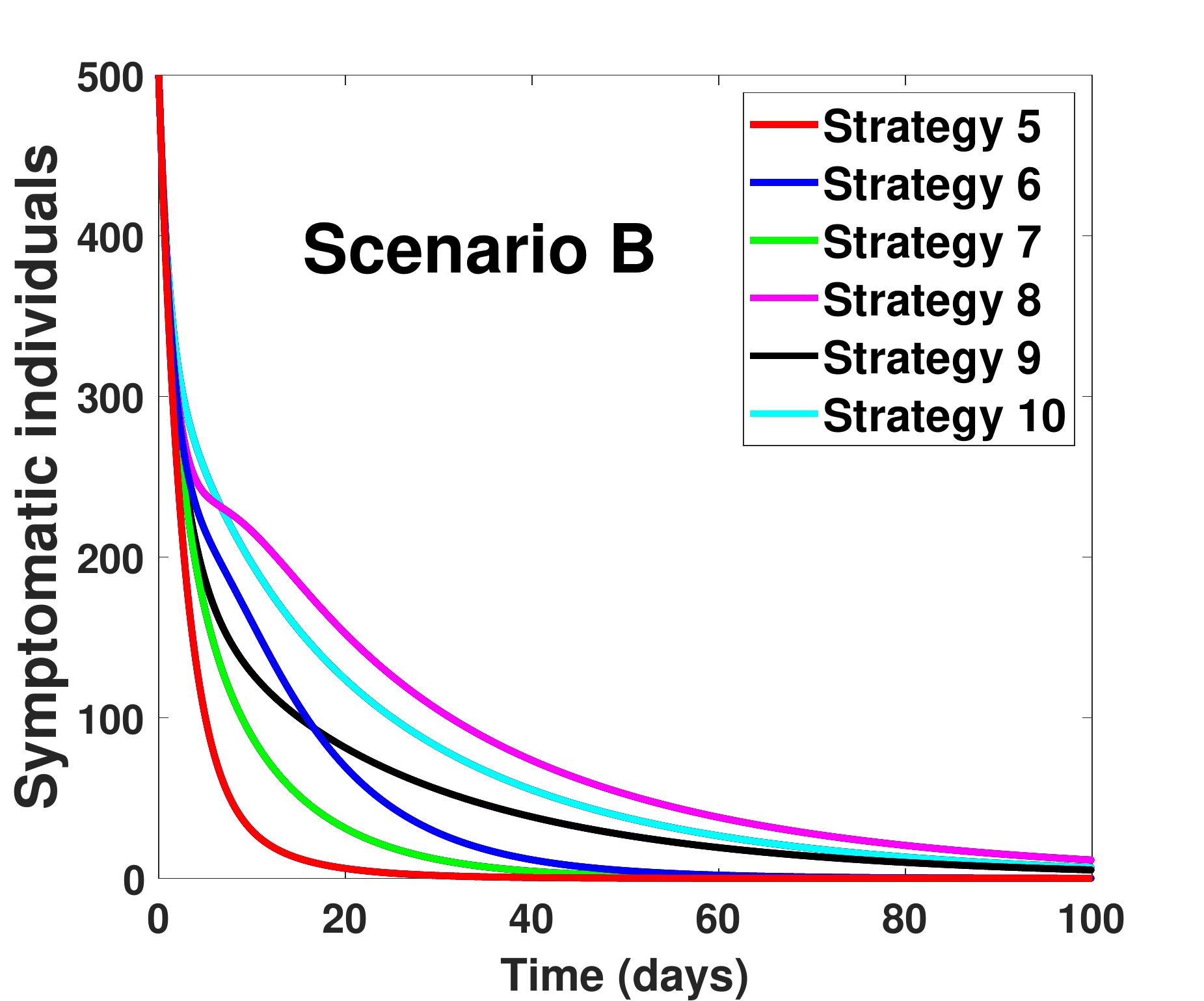}
		\caption{}\label{SB2}
	\end{subfigure}
	\begin{subfigure}{.5\textwidth}
		\centering
		\includegraphics[width=1\linewidth,  height=2in]{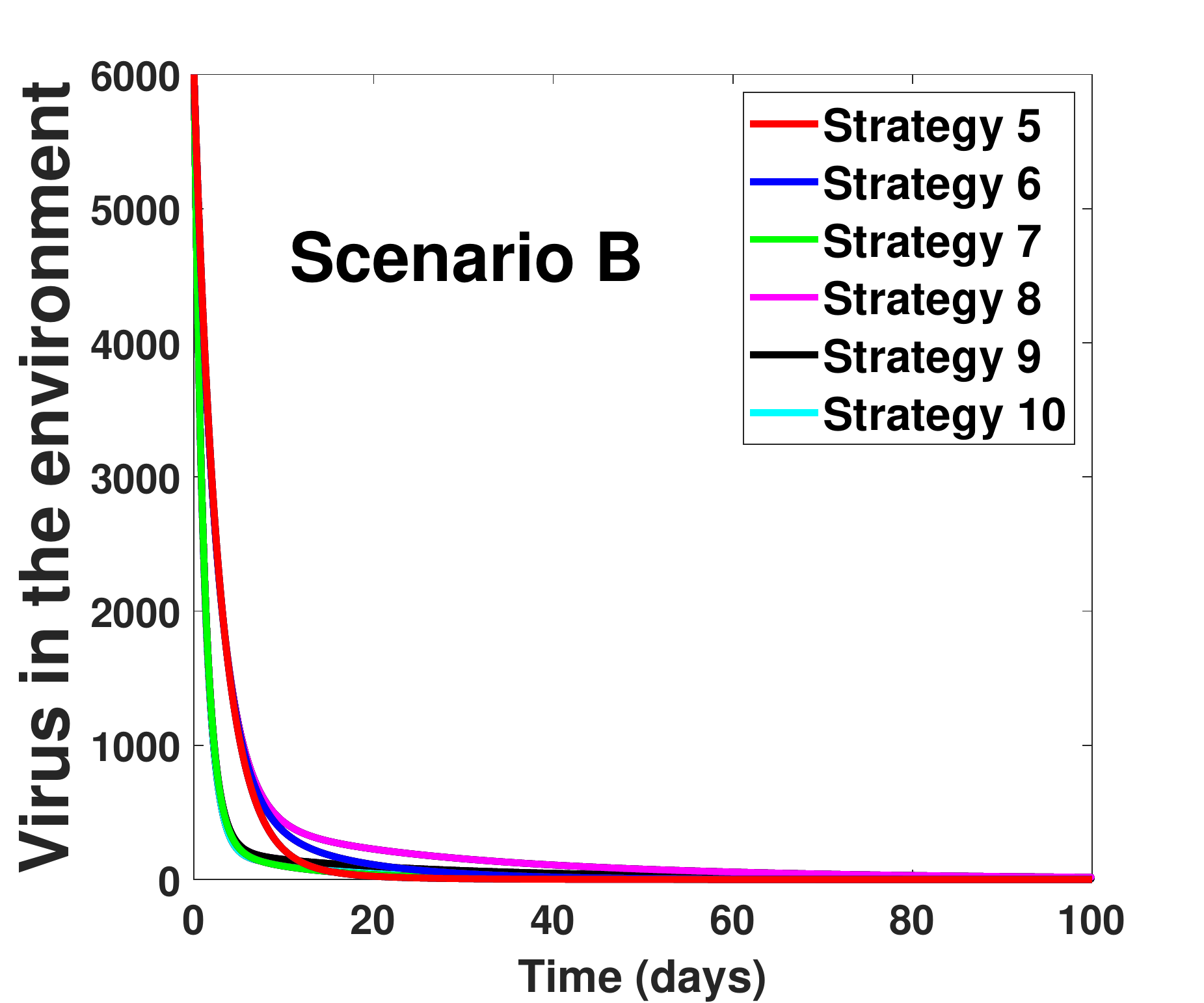}
		\caption{}\label{SB3}
	\end{subfigure}%
	\begin{subfigure}{.5\textwidth}
		\centering
		\includegraphics[width=1\linewidth, height=2in ]{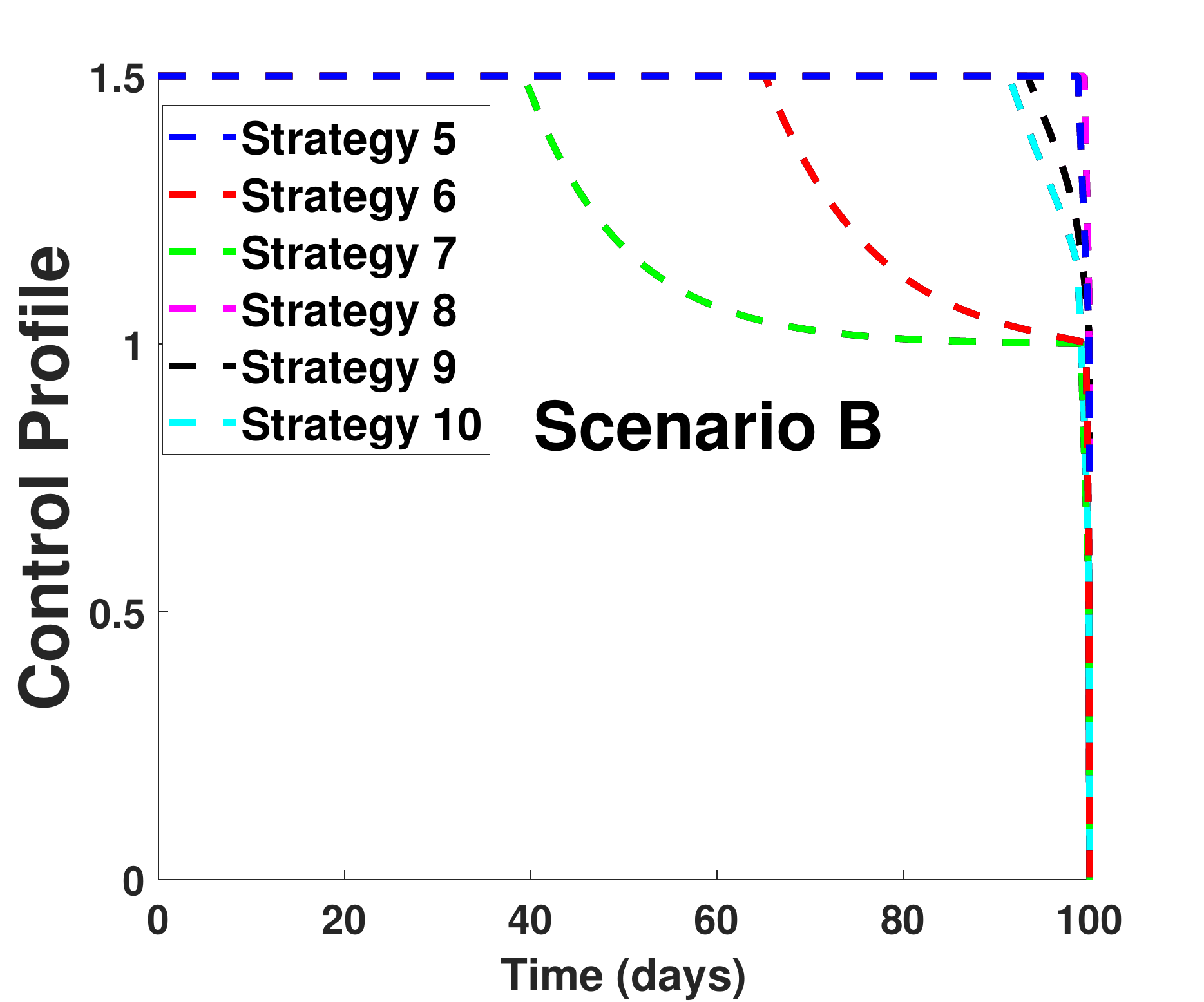}
		\caption{Control profile}\label{SB4}
	\end{subfigure}
	\begin{subfigure}{.5\textwidth}
		\centering
		\includegraphics[width=1\linewidth,  height=2in]{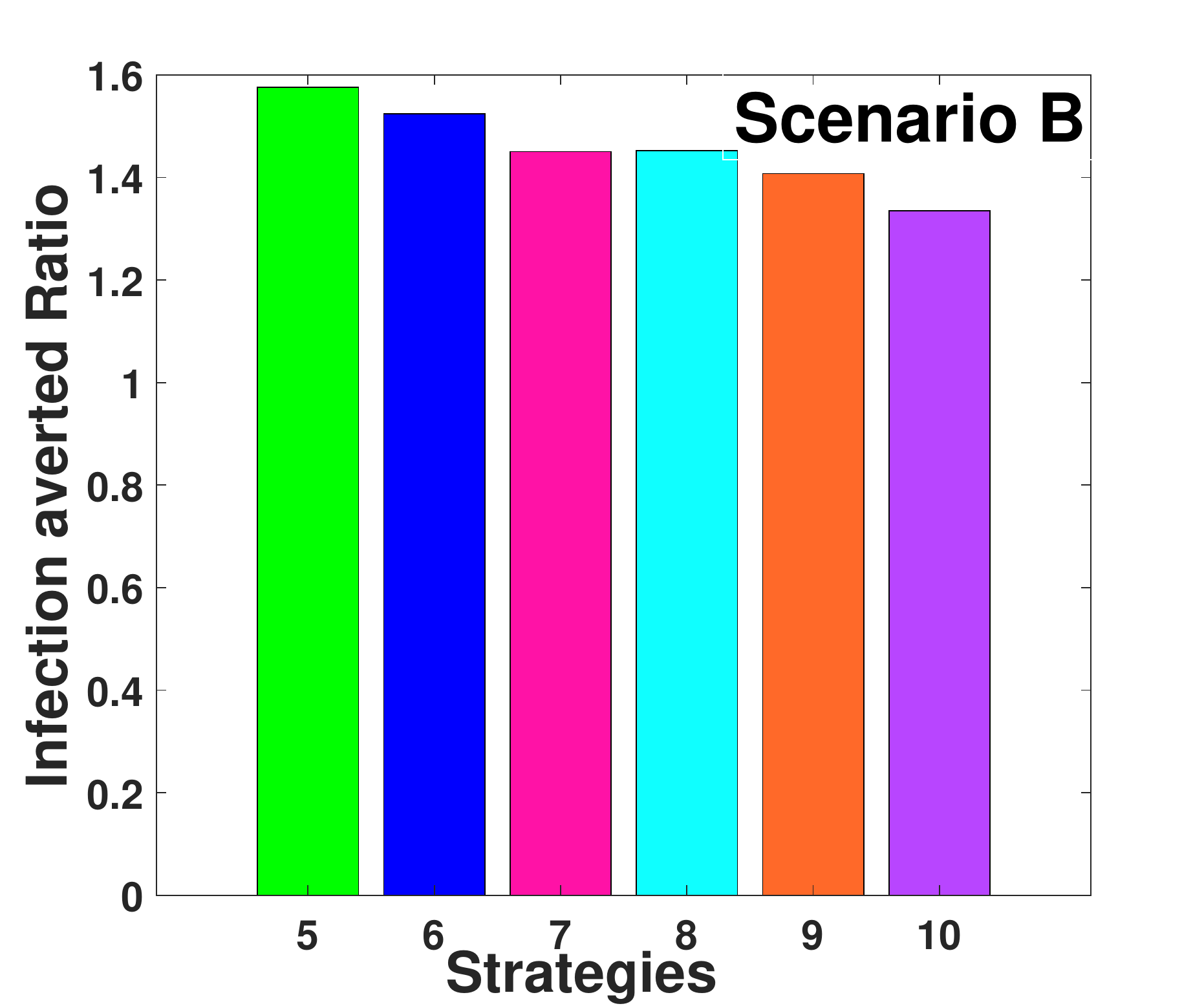}
		\caption{Infection Averted Ratio}\label{SB5}
	\end{subfigure}%
	\begin{subfigure}{.5\textwidth}
		\centering
		\includegraphics[width=1\linewidth, height=2in ]{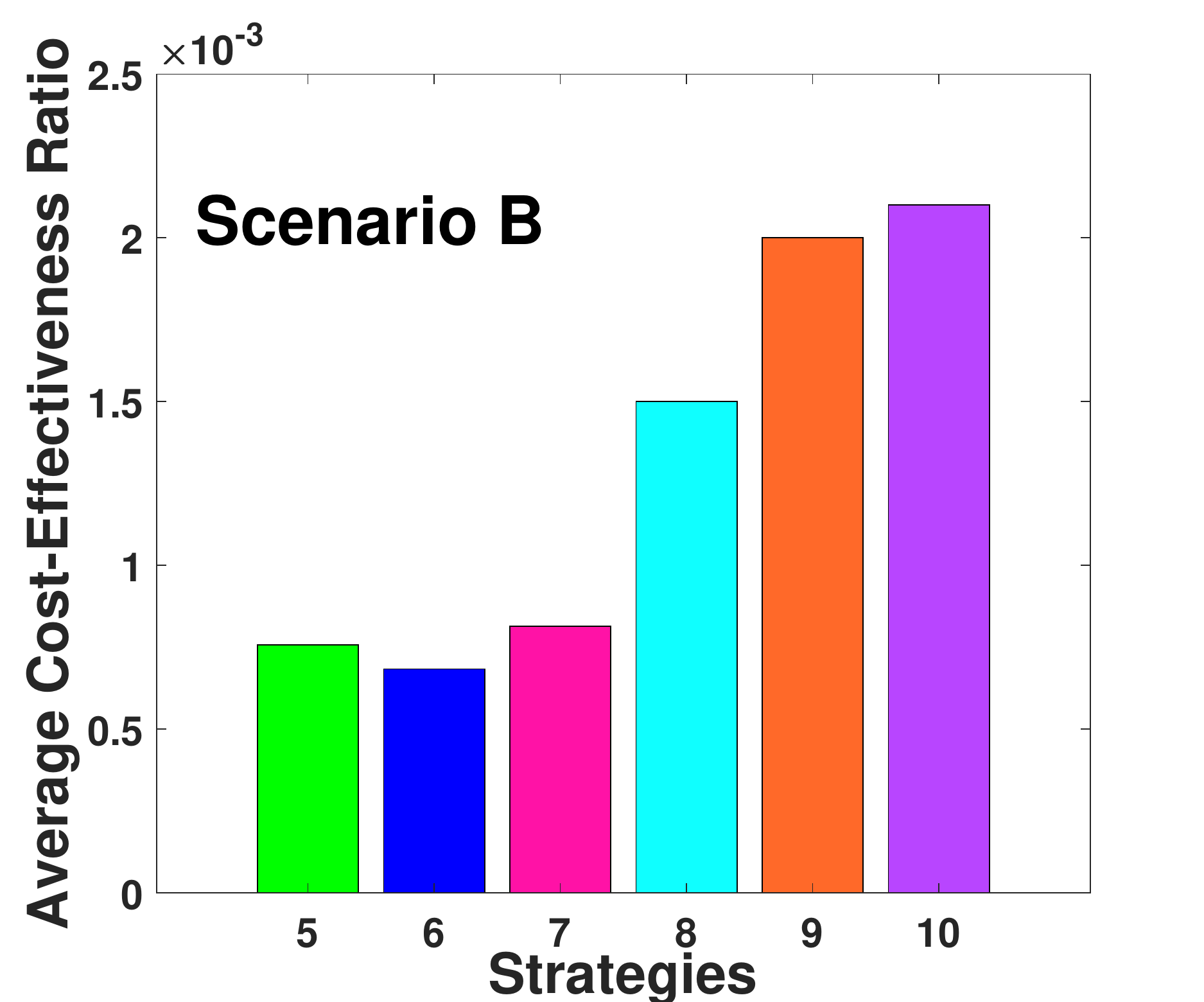}
		\caption{Average Cost-Effectiveness Ratio}\label{SB6}
	\end{subfigure}
	\caption{ Double control strategies.}\label{Doublecontrol}
\end{figure}

\subsubsection{Scenario C: use of triple controls}
In Figure \ref{SC1}-\ref{SC4}, we carried out numerical simulations with the notion that an individual may apply three of the suggested controls simultaneously. We noticed in Figure \ref{SC1} that strategy 11 has the highest number of exposed and asymptomatic averted individuals, in the long run, followed by strategy 12 and then strategy 13. Likewise, in Figure \ref{SC2} we noticed that the dynamical importance of the strategy is of equal usefulness on the number of symptomatic individuals.
\begin{figure}[h!]
	\centering
	\begin{subfigure}{.5\textwidth}
		\centering
		\includegraphics[width=1\linewidth, height=2in]{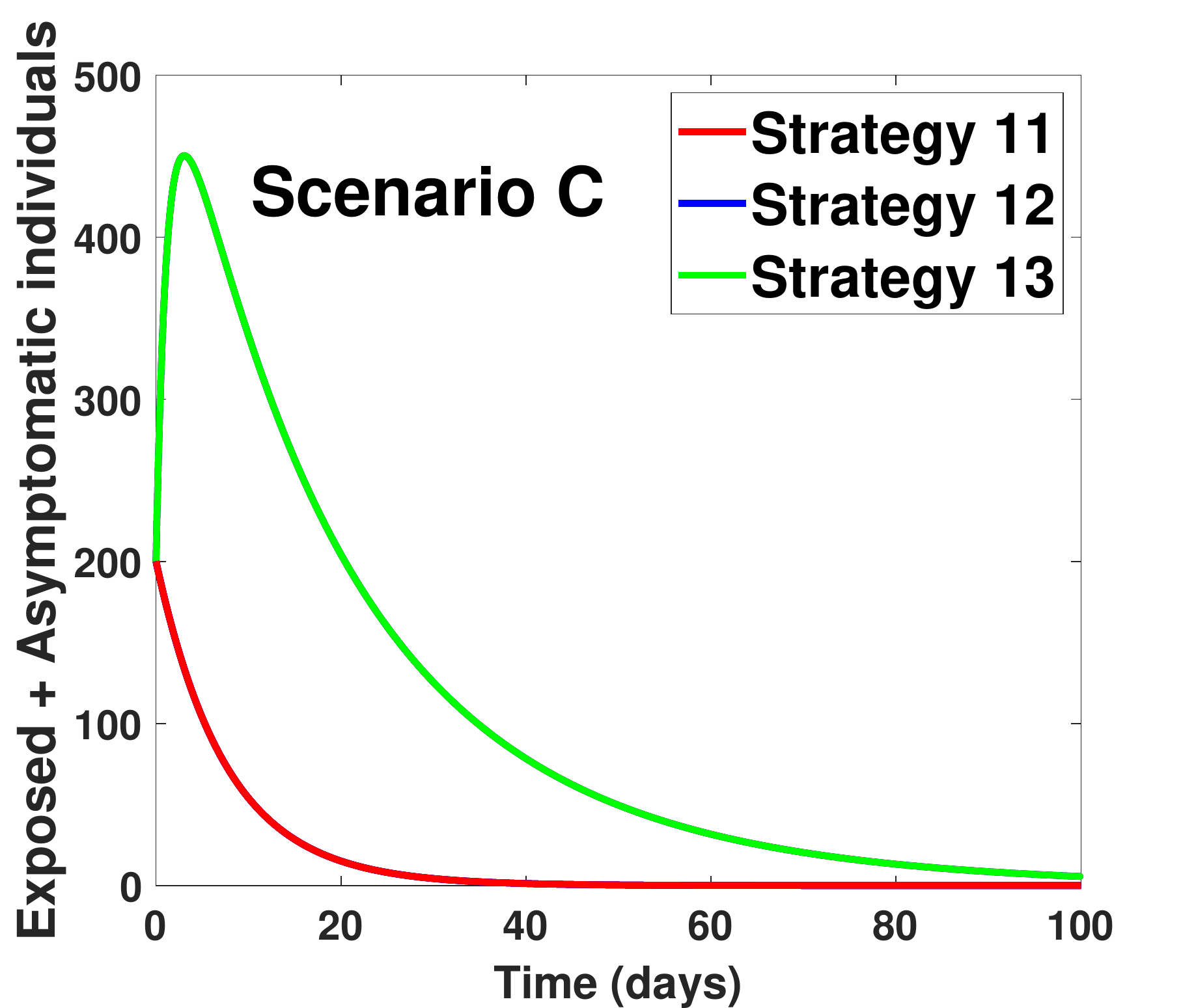}
		\caption{}\label{SC1}
	\end{subfigure}%
	\begin{subfigure}{.5\textwidth}
		\centering
		\includegraphics[width=1\linewidth, height=2in]{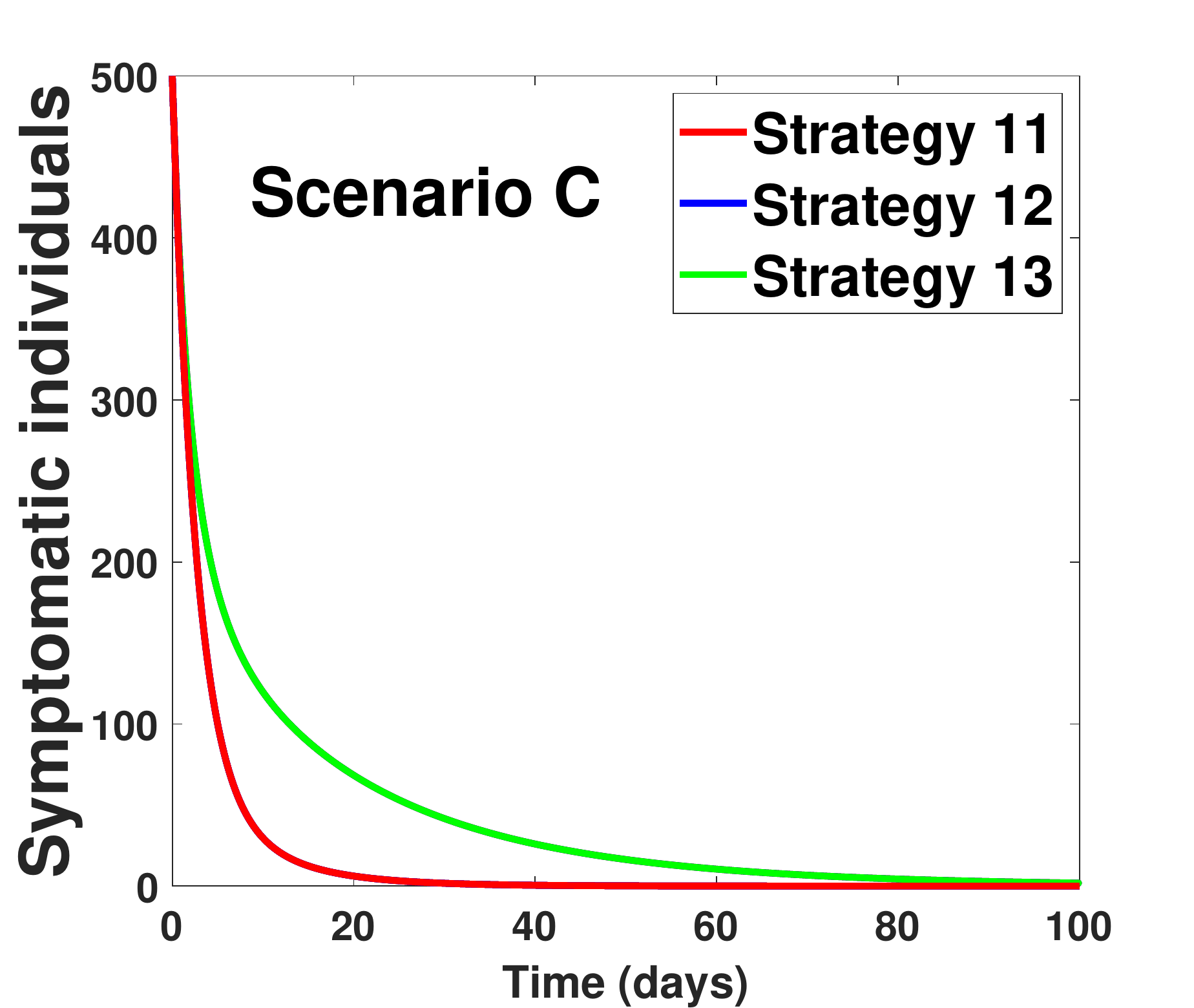}
		\caption{}\label{SC2}
	\end{subfigure}
	\begin{subfigure}{.5\textwidth}
		\centering
		\includegraphics[width=1\linewidth,  height=2in]{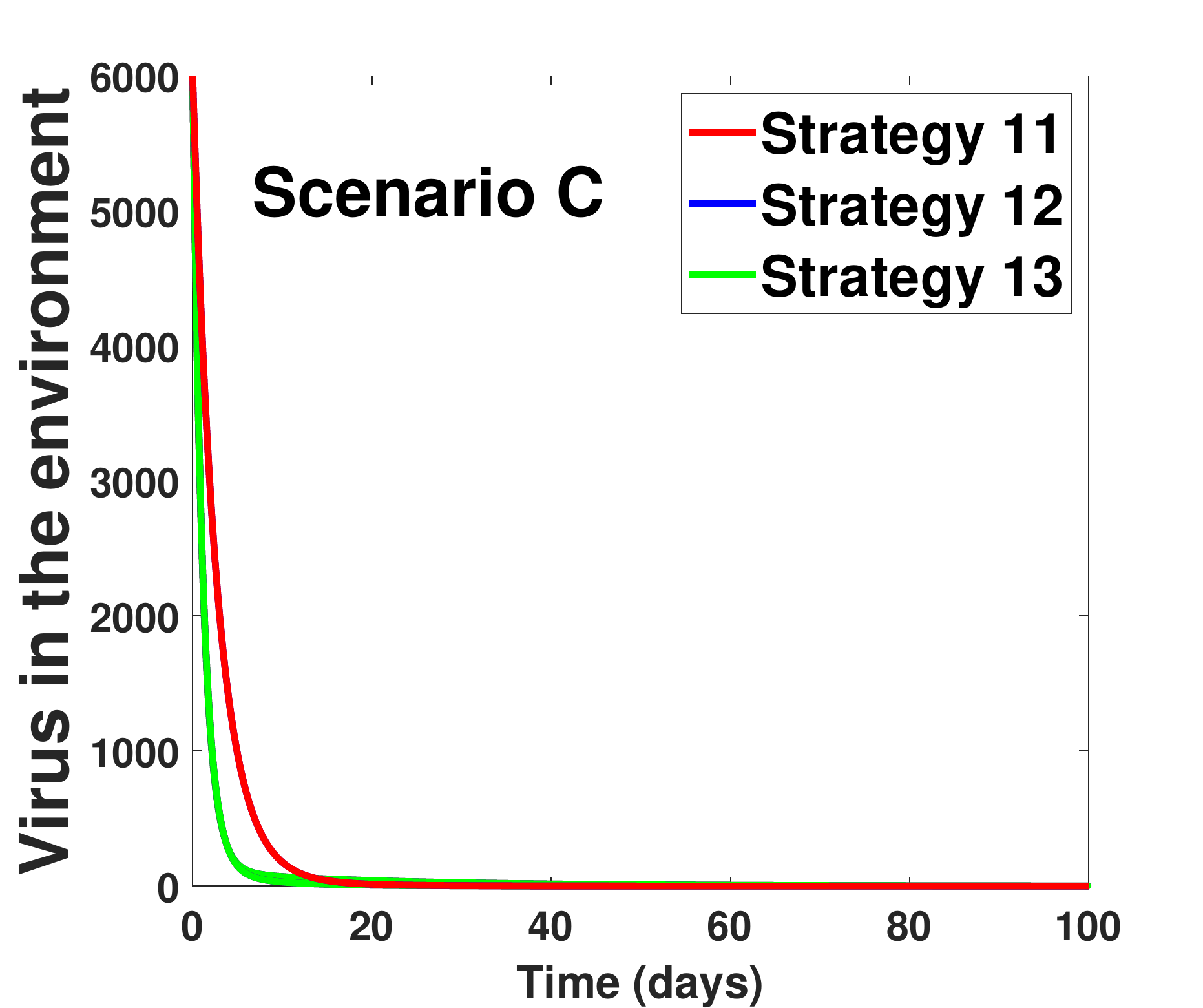}
		\caption{}\label{SC3}
	\end{subfigure}%
	\begin{subfigure}{.5\textwidth}
		\centering
		\includegraphics[width=1\linewidth, height=2in ]{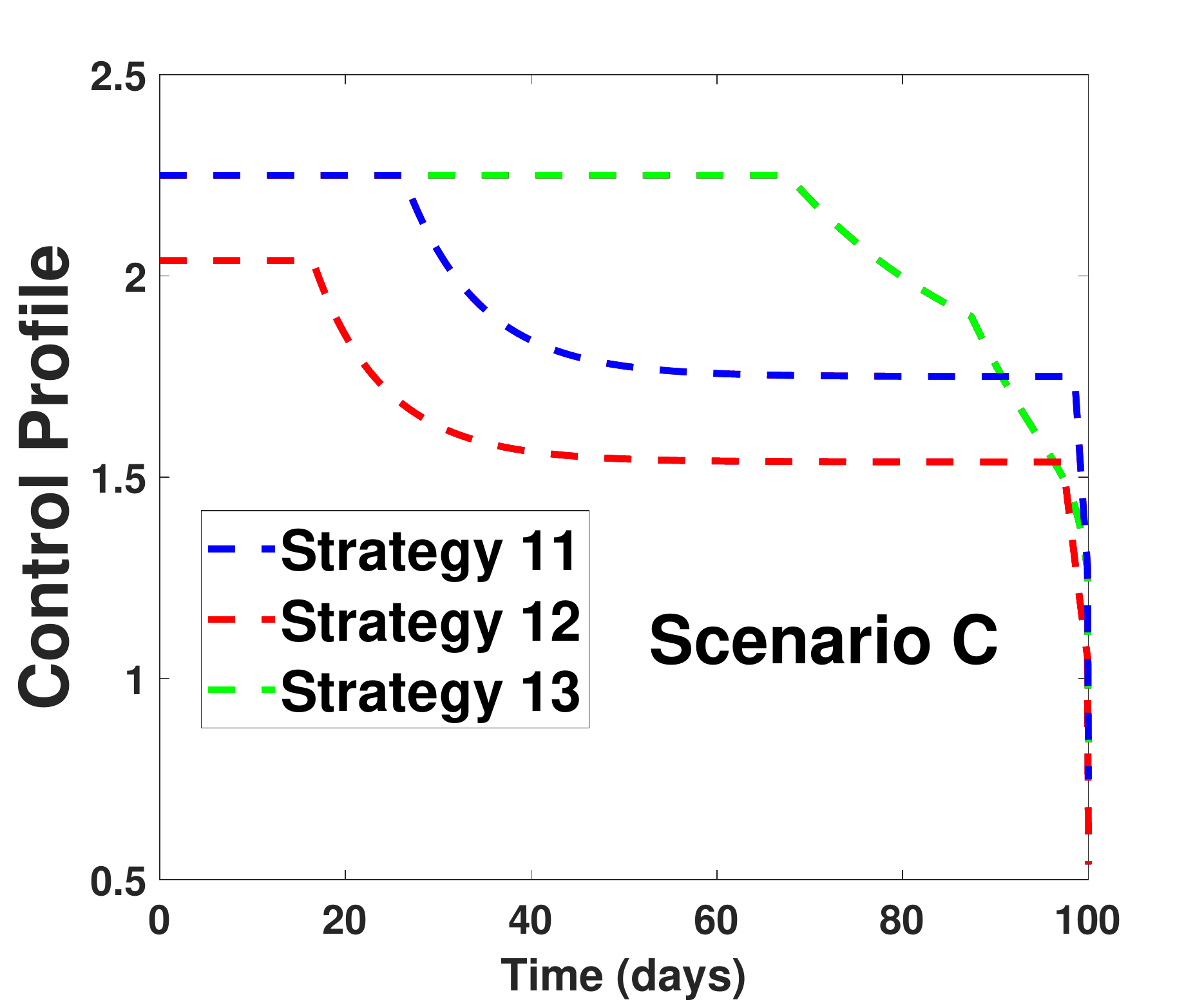}
		\caption{Control profile}\label{SC4}
	\end{subfigure}
	\begin{subfigure}{.5\textwidth}
		\centering
		\includegraphics[width=1\linewidth,  height=2in]{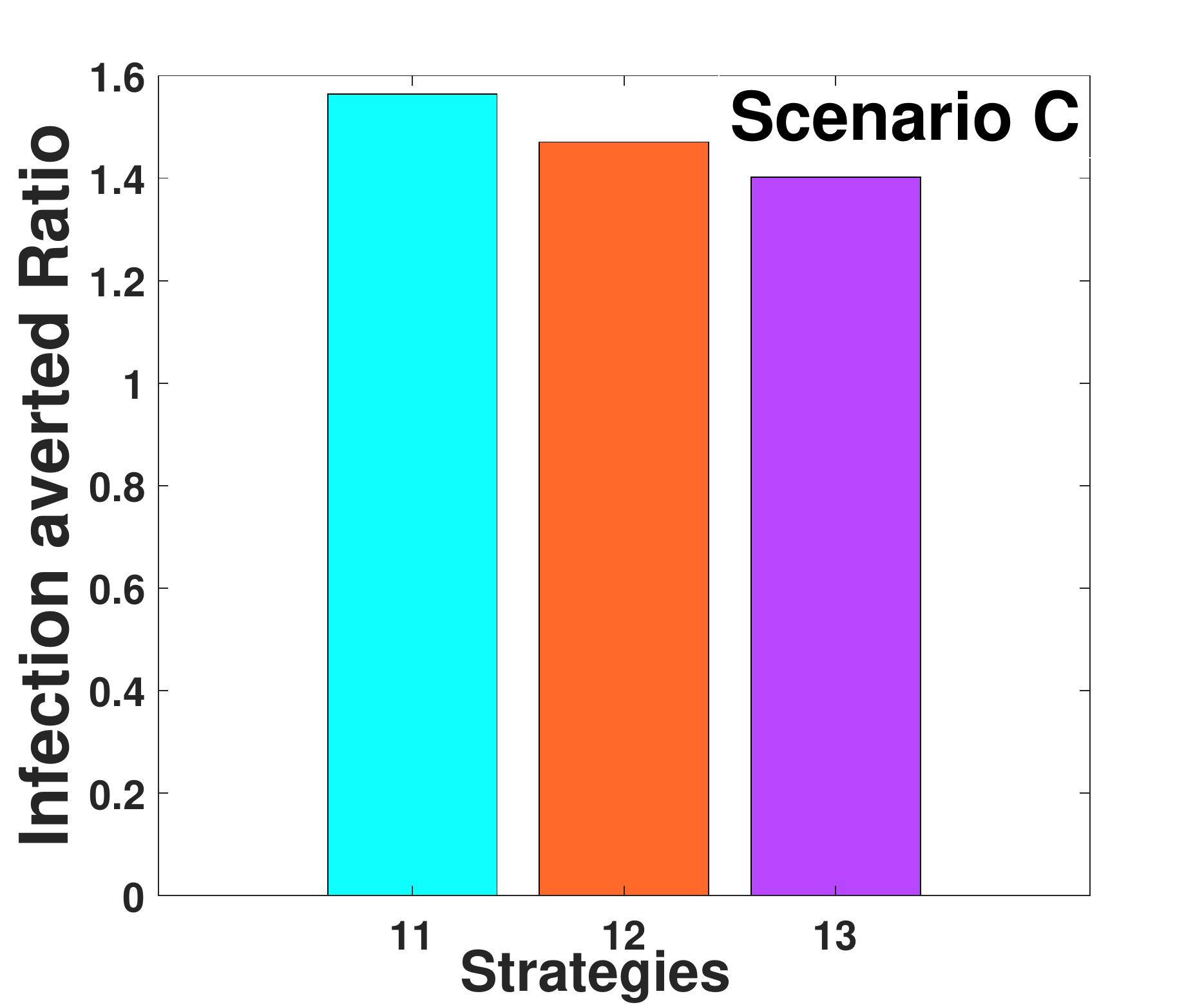}
		\caption{Infection Averted Ratio}\label{SC5}
	\end{subfigure}%
	\begin{subfigure}{.5\textwidth}
		\centering
		\includegraphics[width=1\linewidth, height=2in ]{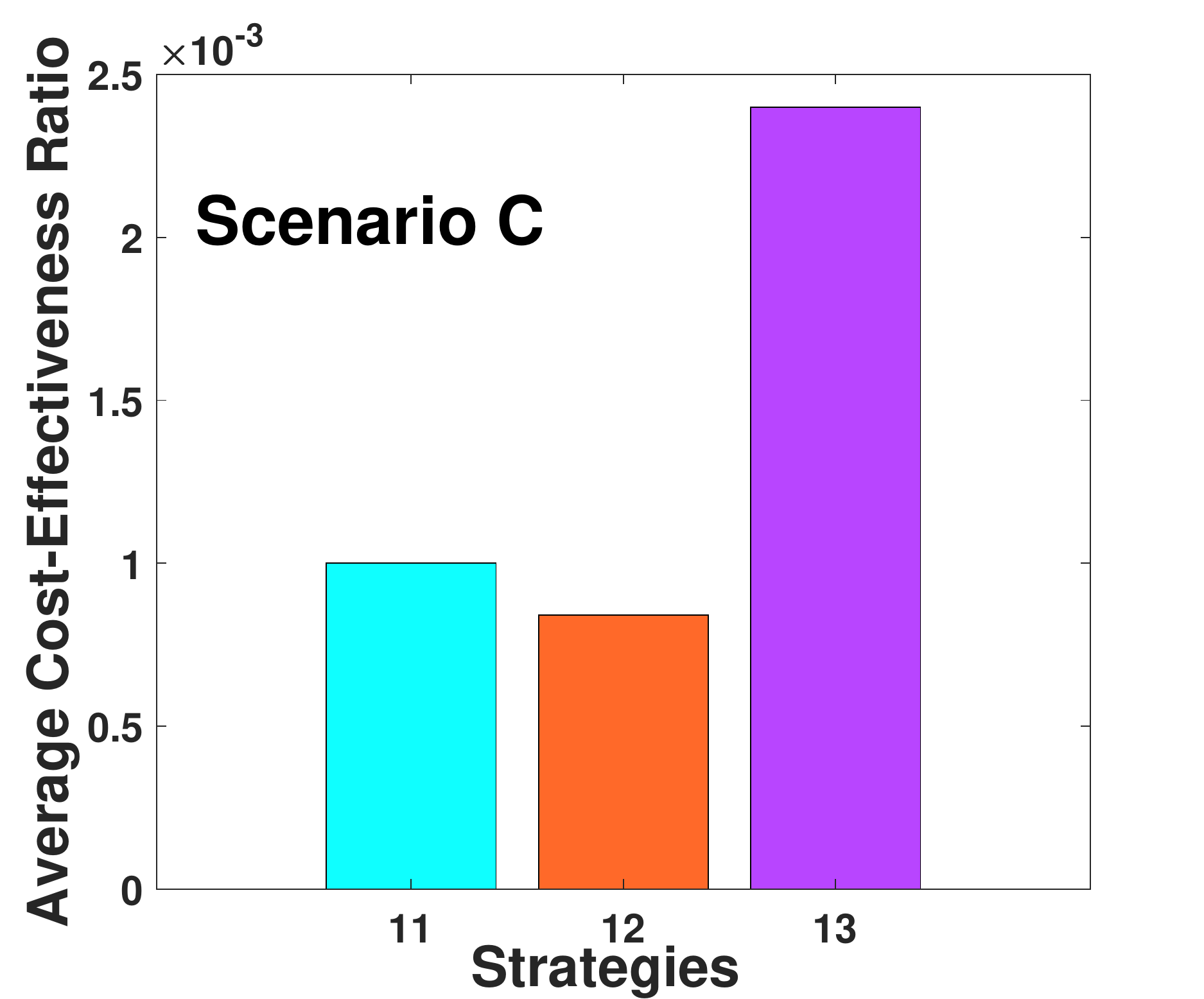}
		\caption{Average Cost-Effectiveness Ratio}\label{SC6}
	\end{subfigure}
	\caption{Implementation of quadruplet controls.}\label{3control}
\end{figure}
 In Figure \ref{SC3}, we noticed that the strategy with the highest number of virus removal from the environment is strategy 13 and 12, with strategy 11 having the most minimum virus removal effect. In Figure \ref{SC4}, the control profiles suggest that the optimal strategies for scenario C should be implemented on a control level of 0.75 for each control term in strategy 11, for 30 days and then reduced to 0.60 for the entire simulation period. For the control strategy 12 in Figure \ref{SC4}, we noticed that the control terms in strategy 12 should be kept at  0.7 for 18 days and then reduced to 0.5 for each of the control terms for the rest of the simulation time. The control profile for strategy 13 shows that each control term should be kept for 0.75 for 70 days and then reduced to 0.47 for the rest of the simulation period.  Figure \ref{SC5} shows the infection averted ratio (IAR) of the various control strategies. The IAR shows that strategy 11 is the most effective. Figure \ref{SC6} shows the average cost-effectiveness ratio (ACER), which indicates that strategy 12 is the most effective and cost-saving strategy in scenario C. The mathematical extraction of the infection averted ratio and the average cost-effectiveness ratio can be found in subsection \ref{costeffective}, where we validate the claim on Figure \ref{SC5} and \ref{SC6} respectively.

\subsubsection{Scenario D: use of quadruplet controls}
In Figure \ref{SD1}-\ref{SD4}, we carried out numerical simulations with the notion that an individual may apply all of the suggested controls simultaneously. We noticed in Figure \ref{SD1} that the number of exposed and asymptomatic individuals drastically reduces when the four controls are applied simultaneously. Figure \ref{SD2} shows that the disease in the symptomatic individuals can be eliminated within 21 days when one chooses to implement all the controls simultaneously.
\begin{figure}[h!]
	\centering
	\begin{subfigure}{.5\textwidth}
		\centering
		\includegraphics[width=1\linewidth, height=2in]{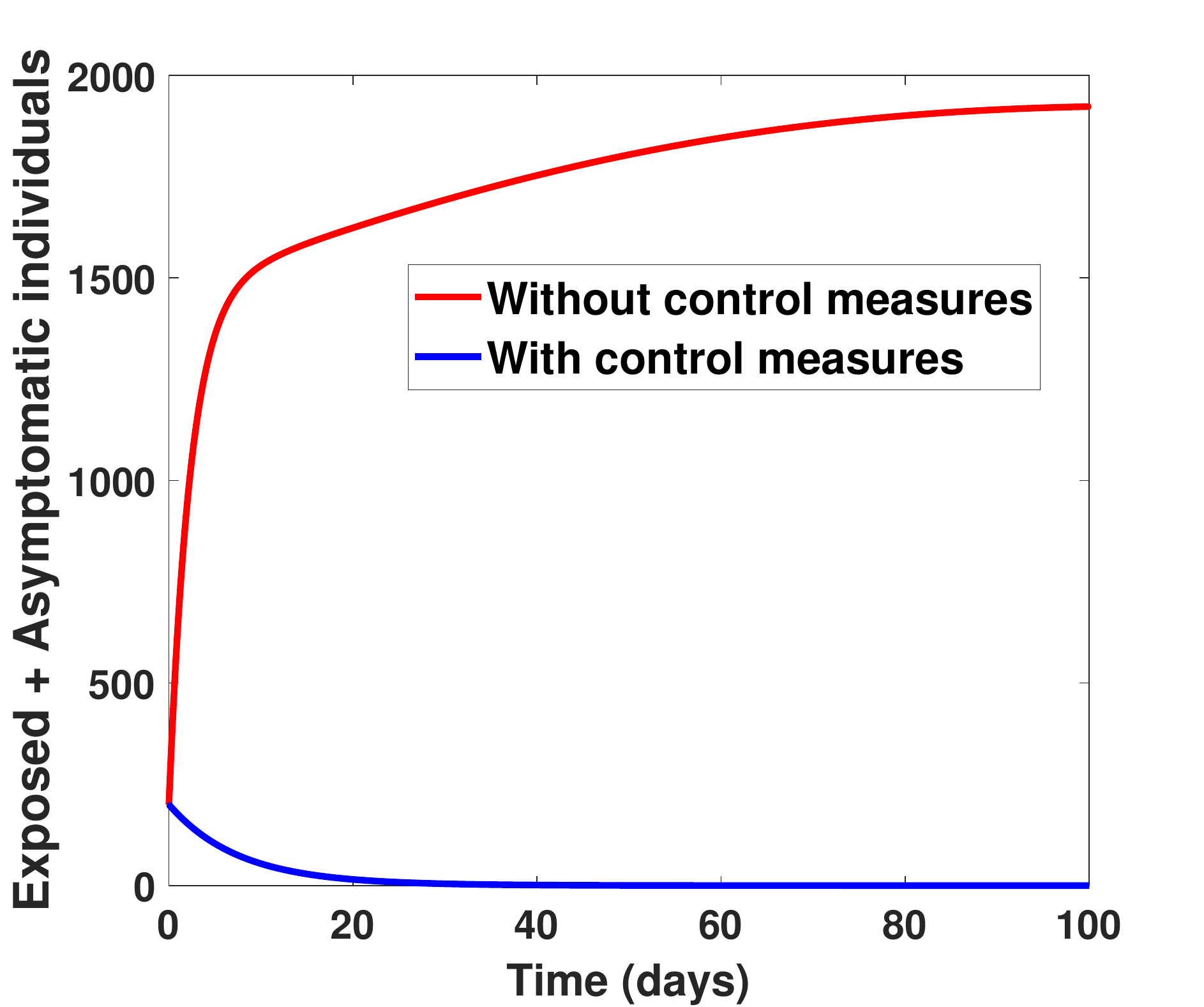}
		\caption{}\label{SD1}
	\end{subfigure}%
	\begin{subfigure}{.5\textwidth}
		\centering
		\includegraphics[width=1\linewidth, height=2in]{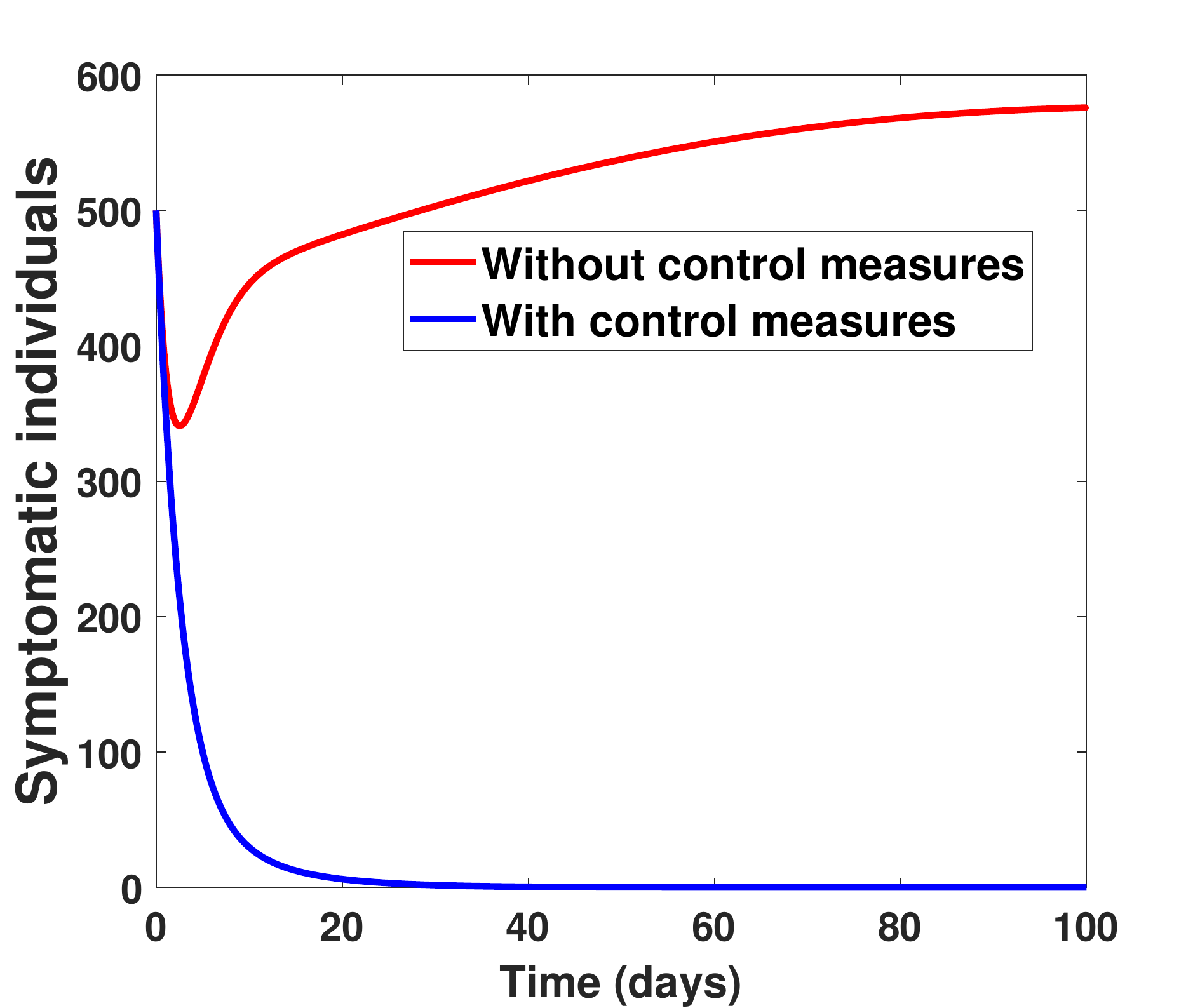}
		\caption{}\label{SD2}
	\end{subfigure}
	\begin{subfigure}{.5\textwidth}
		\centering
		\includegraphics[width=1\linewidth, height=2in]{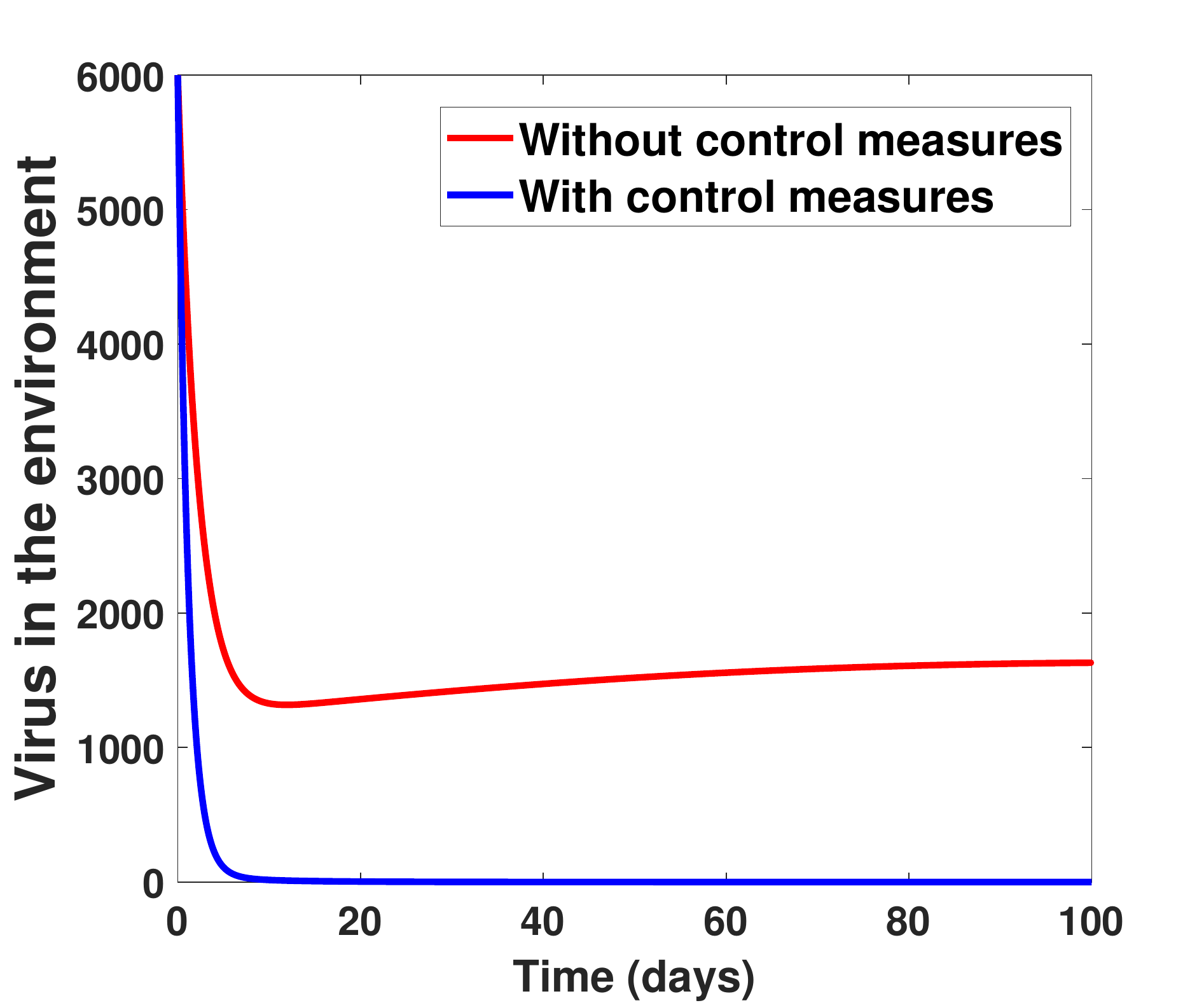}
		\caption{}\label{SD3}
	\end{subfigure}
\vspace{-0.4cm}
	\caption{With and without control strategies.}\label{Quadruplet}
\end{figure}
Figure \ref{SD3} shows that the virus in the environment can be eliminated within 10 days when one chooses to implement all the controls simultaneously. In Figure \ref{SD4} we showed the dynamical changes of each control considered in this work. We noticed that, in the pool of the four controls, control $u_1$ (practising physical or social distancing protocols) and control $u_2$ (practising personal hygiene by cleaning contaminated surfaces with alcohol-based detergents) should be applied at a constant level throughout, with much effort placed on control $u_1$ for 98 days. For the control $u_3$ in Figure \ref{SD4}, we noticed that the control should be kept at  0.75 for 25 days and then gradually reduced to 0.29 for the rest of the simulation time. The control profile for control $u_4$ shows that the control term should be kept for 0.75 for 18 days and then gradually reduced to 0.29 for the rest of the simulation period. Finally, Figure \ref{SD5} shows the efficacies plot for the number of exposed, asymptomatic, symptomatic individuals and the number of viruses removed from the environment, respectively, when one uses all the proposed control simultaneously. We noticed from the efficacies plot that the controls are more efficient on the number of viral removed from the environment, followed by the number of symptomatic individuals, asymptomatic individuals, and exposed individuals. The efficacy plots are obtained from using the following functions: $$E_{E} = \frac{E(0)-E^{*}(t)}{E(0)},  E_{I} = \frac{I(0)-I^{*}(t)}{I(0)}, E_{A} = \frac{A(0)-A^{*}(t)}{A(0)}, E_{B} = \frac{B(0)-B^{*}(t)}{B(0)}$$
where $E(0), I(0), A(0), B(0)$ are the initial data and $E^{*}(t),I^{*}(t), A^{*}(t),B^{*}(t)$ are the function relating to the ``optimal states associated" with the controls \cite{agusto2017optimal}. Figure \ref{SD5} shows that the controls attain $100\%$ efficacy on the disease induced compartment after 39 days.
\begin{figure}[h!]
	\centering
	\begin{subfigure}{.5\textwidth}
		\centering
		\includegraphics[width=1\linewidth,  height=2in]{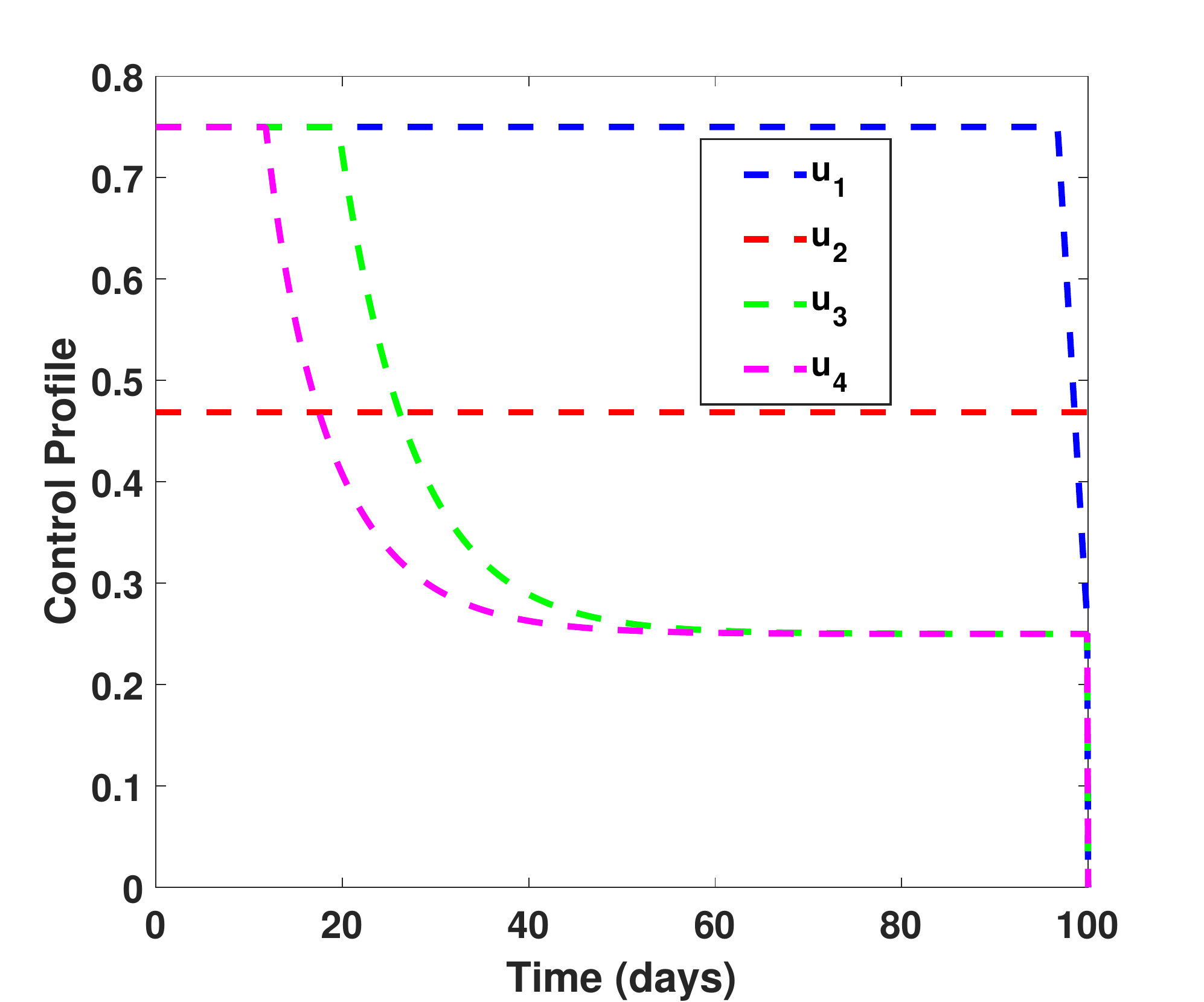}
		\caption{Control profile}\label{SD4}
	\end{subfigure}%
	\begin{subfigure}{.5\textwidth}
		\centering
		\includegraphics[width=1\linewidth, height=2in ]{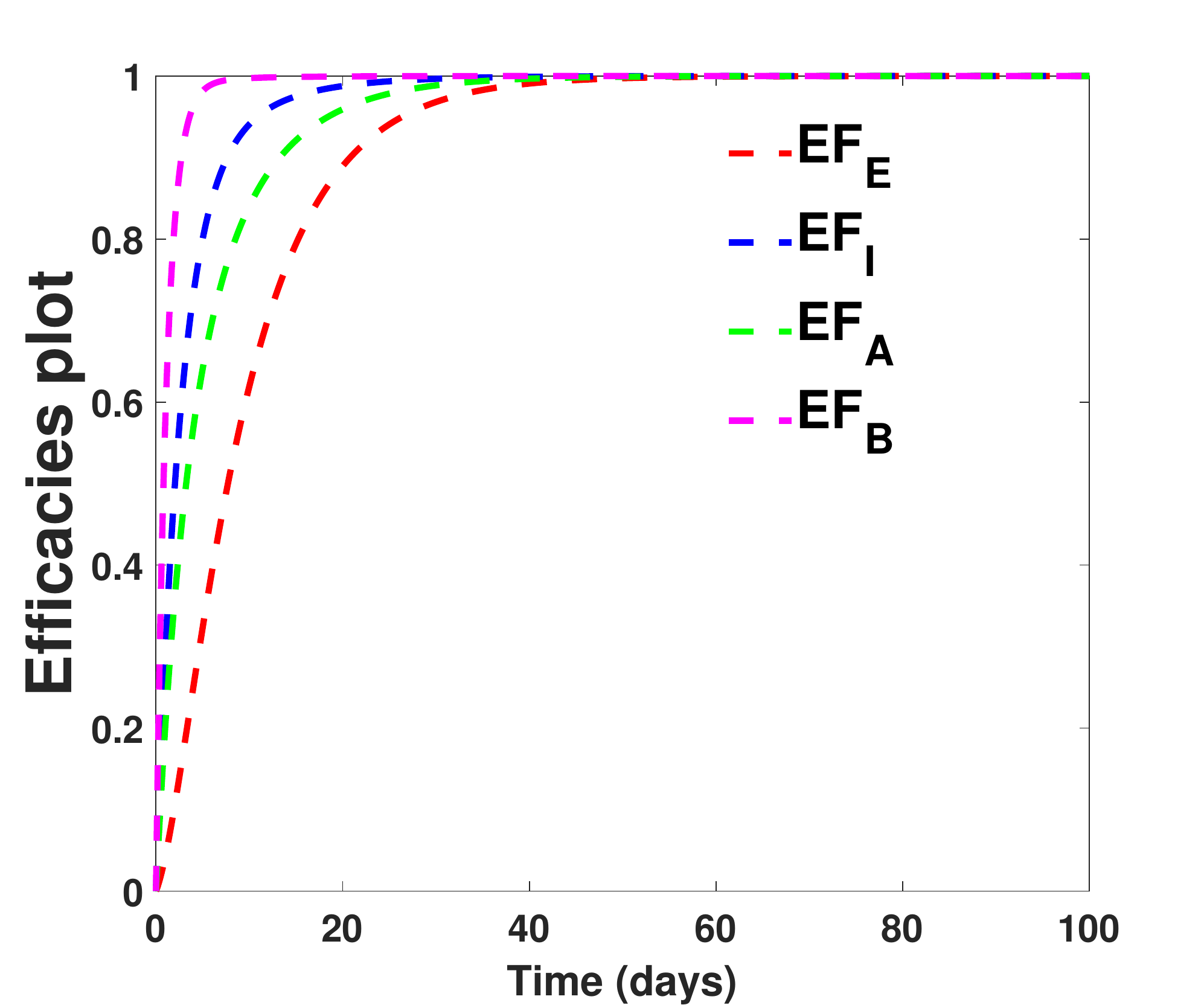}
		\caption{Efficacies  ratio}\label{SD5}
	\end{subfigure}
	\vspace{-0.4cm}
	\caption{With and without control strategies.}\label{different strategies}
\end{figure}
\subsection{Cost-effectiveness analysis}\label{costeffective}

Given the four different scenarios considered for the implementation of optimal control problem in section \ref{sec-numericsim}, cost-effectiveness analysis is employed to decide on the most cost-effective control intervention strategy from other strategies for each of scenarios A--D, under investigation. To implement the cost-effectiveness analysis, we use three approaches. These are: infection averted ratio (IAR) \cite{agusto2019optimal}, average cost-effectiveness ratio (ACER) and incremental cost-effectiveness ratio (ICER) \cite{agusto2019optimal,agusto2013optimal,agusto2017optimal}. Definitions of the three approaches are given as follows:

\subsubsection*{Infection averted ratio (IAR)}

Infection averted ratio (IAR) can be expressed as
\begin{equation*}
\mbox{IAR}=\frac{\mbox{Number of infections averted}}{\mbox{Number of individuals recovered from the infection}},
\end{equation*}
where the number of infections averted represents the difference between the total number of infected individuals without any control implementation and the total number of infected individuals with control throughout the simulation, a control strategy with the highest IAR value is considered as the most cost-effective \cite{agusto2019optimal,okyere2020analysis,agusto2013optimal}.

\subsubsection*{Average cost-effectiveness ratio (ACER)}

Average cost-effectiveness ratio (ACER) is stated as
\begin{equation*}
\mbox{ACER}=\frac{\mbox{Total cost incurred on the implementation of a particular intervention strategy}}{\mbox{Total number of infections averted by the intervention strategy}}.
\end{equation*}
The total cost incurred on implementing a particular intervention strategy is estimated from
\begin{equation}
\label{tcost}
\mathcal C(u)=\frac{1}{2}\int_0^T\sum_{i=1}^4D_iu_i^2dt.
\end{equation}

\subsubsection*{Incremental cost-effectiveness ratio (ICER)}

Usually, the incremental cost-effectiveness ratio (ICER) measures the changes between the costs and health benefits of any two different intervention strategies competing for the same limited resources. Considering strategies p and q as two competing control intervention strategies, then ICER is stated as
\begin{equation*}
\mbox{ICER}=\frac{\mbox{Change in total costs in strategies p and q}}{\mbox{Change in control benefits in strategies p and q}}.
\end{equation*}
ICER numerator includes the differences in disease averted costs, costs of prevented cases, intervention costs, among others. While the denominator of ICER accounts for the differences in health outcome, including the total number of infections averted or the total number of susceptibility cases prevented.

\subsubsection{Scenario A: use of single control}

Owing to the simulated results of the optimality system under scenario A (when only one control is implemented with considerations of strategies 1--4) as shown in Figure \ref{Singlecontrol}, we calculate IAR, ACER and ICER for each of the four control strategies.

For IAR, the fourth column of Table \ref{ICERT1} summarizes the calculated values for the implemented strategies. Accordingly, strategy 2 (practising personal hygiene by cleaning contaminated surfaces with alcohol-based detergents only) has the highest IAR value, followed by strategy 1 (practising physical or social distancing protocol only), strategy 4 (fumigating schools in all levels of education, sports facilities and commercial areas such as markets and public toilet facilities only), and lastly strategy 3 (practising proper and safety measures by the exposed, asymptomatic infected and symptomatic infected individuals only). Consequently, the most cost-effective strategy according to this cost-effectiveness analysis approach is strategy 2. The next most cost-effective strategy is strategy 1, followed by strategy 4, then strategy 3.

According to the ACER cost-effectiveness analysis method, strategy 4 has the highest ACER value, followed by strategy 3, strategy 2 and strategy 1 as shown in the fifth column of Table \ref{ICERT1}. Therefore, the cost-effectiveness of the four strategies implemented, ranging from the most cost-effective to the least cost-effective strategy, is given as strategy 1, strategy 2, strategy 3, and strategy 4.

Next, ICER values are computed for the four control intervention strategies under scenario A to further affirm the most economical strategy among them. Based on the results obtained for the numerical simulations of optimal control problem in scenario A (see Figure \ref{Singlecontrol}), strategies 1--4 are ranked according to their increasing order in respect of the total number of COVID-19 infections averted in the community. We have that Strategy 3 averts the least number of the disease infections, followed by Strategy 2, Strategy 4 and Strategy 1 as shown in Table \ref{ICERT1}.

\begin{table}[h!]
	\centering
	\caption{Incremental cost-effectiveness ratio for scenario A \label{ICERT1}}
	%	{\def\arraystretch{1.2}	
	\begin{tabular}{lccccc}
		\toprule
		{\bf Strategy} & {\bf Infection averted} & {\bf Cost} & {\bf IAR} & {\bf ACER}& {\bf ICER}\\ \toprule
		Strategy 3: $u_3(t)$ & $1.4423\times 10^{6}$ & $1.4063\times 10^{3}$ & $1.2325$ & $9.7498\times 10^{-4}$&$9.7498\times 10^{-4}$\\
		Strategy 2: $u_2(t)$ & $1.6603\times 10^{6}$ & $1.4077\times 10^{3}$ & $1.5835$ & $8.4784\times 10^{-4}$&$6.4220\times 10^{-6}$\\
		Strategy 4: $ u_4(t)$ & $1.8000\times 10^{6}$ & $2.8098\times 10^{3}$ & $1.2914$ & $0.0016$&0.0100\\
		Strategy  1: $u_1(t),$ & $2.0679\times 10^{6}$ & $281.1135$ & $1.5793$ & $1.3594\times 10^{-4}$&$-0.0004$\\
		\bottomrule
	\end{tabular}
	%	}
\end{table}

Thus, ICER is computed for the competing control Strategy 1, Strategy 2, Strategy 3 and Strategy 4 as follows:
\begin{align*}
	\mbox{ICER(3)}&=\frac{1.4063\times 10^3-0}{1.4423\times10^6-0}= 9.7504\times 10^{-4},\\
	\mbox{ICER(2)}&=\frac{1.4077\times 10^3 - 1.4063\times 10^3}{1.6603\times10^6 -1.4423\times10^6}=6.4220\times 10^{-6},\\
	\mbox{ICER(4)}&=\frac{2.8098\times 10^3 - 1.4077\times 10^3 }{1.8000\times 10^6-1.6603\times10^6  }=0.0100,\\
	\mbox{ICER(1)}&=\frac{281.1135-2.8098\times 10^3}{2.0679\times 10^6-1.8000\times 10^6 }=-0.0004.
\end{align*}

The computed results (as presented in Table \ref{ICERT1}) indicate that the ICER value of Strategy 4, ICER(4), is higher than that of Strategy 3. This means that the singular application of control $u_4$ (fumigating schools in all levels of education, sports facilities and commercial areas such as markets and public toilet facilities) is more costly and less effective than when only control $u_3$ (practising proper and safety measures by the exposed, asymptomatic infected and symptomatic infected individuals) is applied. Thus, Strategy 4 is eliminated from the list of alternative control strategies.

Then, ICER is further calculated for the competing Strategy 3 with Strategies 1 and 2. The computation is as follows:
\begin{align*}
	\mbox{ICER(3)}&=\frac{1.4063\times 10^3-0}{1.4423\times10^6-0}= 9.7504\times 10^{-4},\\
	\mbox{ICER(2)}&=\frac{1.4077\times 10^3 - 1.4063\times 10^3}{1.6603\times10^6 -1.4423\times10^6}=6.4220\times 10^{-6},\\
	\mbox{ICER(1)}&=\frac{281.1135-1.4077\times 10^3}{2.0679\times 10^6-1.6603\times10^6  }=-0.0028.
\end{align*}
The summary of ICER calculations is summarized in Table \ref{ICERT2}. Looking at Table \ref{ICERT2}, it is seen that there is a cost-saving of $6.4220\times10^{-6}$ for Strategy 2 over Strategy 3 following the comparison of ICER(2) and ICER(3). The obtained lower ICER for Strategy 2 indicates that Strategy 3 strongly dominates Strategy 2, implying that Strategy 3 is more costly and less effective to implement than Strategy 2. Thus, it is better to eliminate Strategy 3 from the control intervention strategies and focus on the alternative control interventions to implement for limited resources preservation. Consequently, Strategy 3 is excluded, and Strategy 2 is further compared with Strategy 1.

\begin{table}[h!]
	\centering
	\caption{Incremental cost-effectiveness ratio for scenario A \label{ICERT2}}
	%	{\def\arraystretch{1.2}	
	\begin{tabular}{lccccc}
		\toprule
		{\bf Strategy} & {\bf Infection averted} & {\bf Cost} & {\bf IAR} & {\bf ACER}& {\bf ICER}\\ \toprule
		Strategy 3: $u_3(t)$ & $1.4423\times 10^{6}$ & $1.4063\times 10^{3}$ & $1.2325$ & $9.7498\times 10^{-4}$&$9.7504\times 10^{-4}$\\
		Strategy 2: $u_2(t)$ & $1.6603\times 10^{6}$ & $1.4077\times 10^{3}$ & $1.5835$ & $8.4784\times 10^{-4}$&$6.4220\times 10^{-6}$\\
		Strategy  1: $u_1(t),$ & $2.0679\times 10^{6}$ & $281.1135$ & $1.5793$ & $1.3594\times 10^{-4}$&$-0.0028$\\
		\bottomrule
	\end{tabular}
	%	}
\end{table}

We now face the re-calculation of the ICER for Strategies 1 and 2. The calculations are made as follows:
\begin{align*}
	\mbox{ICER(2)}&=\frac{1.4077\times 10^3 -0}{1.6603\times10^6 -0}=8.4784\times 10^{-4},\\
	\mbox{ICER(1)}&=\frac{281.1135-1.4077\times 10^3}{2.0679\times 10^6-1.6603\times10^6  }=-0.0028.
\end{align*}
The results obtained from ICER computations are presented in Table \ref{ICERT3}. From Table \ref{ICERT3}, it is shown that ICER(2) is greater than ICER(1). The implication of the lower ICER value obtained for Strategy 1 is that Strategy 2 strongly dominates, implying that Strategy 2 is more costly and less effective to implement than Strategy 1. Therefore, Strategy 1 (practising physical or social distancing protocol only) is considered the most cost-effective among the four strategies in Scenario A analysed in this work, which confirms the results in Figure \ref{SA6}.

\begin{table}[h!]
	\centering
	\caption{Incremental cost-effectiveness ratio for scenario A \label{ICERT3}}
	%	{\def\arraystretch{1.2}	
	\begin{tabular}{lccccc}
		\toprule
		{\bf Strategy} & {\bf Infection averted} & {\bf Cost} & {\bf IAR} & {\bf ACER}& {\bf ICER}\\ \toprule
		Strategy 2: $u_2(t)$ & $1.6603\times 10^{6}$ & $1.4077\times 10^{3}$ & $1.5835$ & $8.4784\times 10^{-4}$&$8.4784\times 10^{-4}$\\
		Strategy  1: $u_1(t),$ & $2.0679\times 10^{6}$ & $281.1135$ & $1.5793$ & $1.3594\times 10^{-4}$&$-0.0028$\\
		\bottomrule
	\end{tabular}
	%	}
\end{table}

\subsubsection{Scenario B: use of double controls}

According to the results obtained from the numerical implementation of the optimality system under Scenario B (when only two different controls are implemented with considerations of Strategies 5--10) as illustrated in Figure \ref{Doublecontrol}, we discuss the IAR, ACER and ICER cost analysis techniques for Strategies 5--10 here.

To compare Strategies 5--10 using the IAR cost analysis approach, the computed values for the six control strategies are as presented in the fourth column of Table \ref{ICERB1}. A look at Table \ref{ICERB1} shows that Strategy 5 has the highest IAR. This is followed by Strategy 6, then Strategies 8, 7, 9 and 10. Therefore, it follows that Strategy 5 (which combines practising physical or social distancing protocols with practising personal hygiene by cleaning contaminated surfaces with alcohol-based detergents) is considered most cost-effective among the six strategies in Scenario B as analysed according to the IAR cost analysis technique.

Also, we use the ACER technique to determine the most cost-effective strategy among the various intervention strategies considered in Scenario B. From the results obtained (as shown contained in the fifth column of Table \ref{ICERB1}), it is clear that Strategy 6 has the least ACER value, followed by Strategies 5, 7, 8, 9 and 10. Hence, Strategy 6 (which combines practising physical or social distancing protocols with practising proper and safety measures by exposed, asymptomatic infected and asymptomatic infected individuals) is the most cost-effective among the set of control strategies considered in Scenario B based on the ACER cost-effective analysis method.

To further affirm the most cost-effective strategy among Strategies 5--10, we implement ICER cost analysis approach on the six intervention strategies. Using the simulated results (as demonstrated in Figure \ref{Doublecontrol}), the six control strategies are ranked from least to most effective according to the number of COVID-19 infections averted as shown in Table \ref{ICERB1}. So, Strategy 8 averts the least number of infections, followed by Strategy 10, Strategy 9, Strategy 6, Strategy 7 and Strategy 5, averting the most number of infections in the population.

\begin{table}[h!]
	\centering
	\caption{Incremental cost-effectiveness ratio for scenario B\label{ICERB1}}
	%	{\def\arraystretch{1.2}	
	\begin{tabular}{lccccc}
		\toprule
		{\bf Strategy} & {\bf Infection averted} & {\bf Cost} & {\bf IAR} & {\bf ACER}& {\bf ICER} \\ \toprule
		Strategy 8: $u_2(t), u_3(t)$ & $1.9253\times 10^{6}$ & $2.8104\times 10^{3}$ & $1.4524$ & $0.0015$&$0.0015$\\
		Strategy 10: $u_3(t), u_4(t)$ & $1.9809\times 10^{6}$ & $4.1019\times 10^{3}$ & $ 1.3350$ & $0.0021$&$0.0232$\\
		Strategy 9: $ u_2(t), u_4(t)$ & $2.0684\times 10^{6}$ & $4.1495\times 10^{3}$ & $1.4077$ & $0.0020$&$5.4400\times10^{-4}$\\
		Strategy 6: $u_1(t), u_3(t)$ & $2.1128\times 10^{6}$ & $1.3487\times 10^{3}$ & $1.5239$ & $6.3834\times 10^{-4}$&$-0.0631$\\
		Strategy 7: $u_1(t), u_4(t)$ & $2.1751\times 10^{6}$ & $1.7708\times 10^{3}$ & $1.4506$ & $8.1410\times 10^{-4}$&$0.0068$\\
		Strategy 5: $u_1(t), u_2(t)$ & $2.2265\times 10^{6}$ & $1.6871\times 10^{3}$ & $1.5759$ & $7.5775\times 10^{-4}$&$-0.0016$\\
		\bottomrule
	\end{tabular}
	%	}
\end{table}
The ICER value for each strategy is computed as follows:
\begin{align*}
	\mbox{ICER(8)}&=\frac{2.8104\times 10^3-0}{1.9253\times10^6-0}= 0.0015,\\
	\mbox{ICER(10)}&=\frac{4.1019\times 10^3 - 2.8104\times 10^3}{1.9809\times10^6 -1.9253\times10^6}=0.0232,\\
	\mbox{ICER(9)}&=\frac{4.1495\times 10^3 - 4.1019\times 10^3 }{2.0684\times 10^6-1.9809\times10^6  }=5.4400\times10^{-4},\\
	\mbox{ICER(6)}&=\frac{1.3487\times10^3-4.1495\times 10^3}{2.1128\times 10^6-2.0684\times 10^6 }=-00631,\\
	\mbox{ICER(7)}&=\frac{1.7708\times 10^3 - 1.3487\times 10^3 }{2.1751\times 10^6-2.1128\times10^6  }=0.0068,\\
	\mbox{ICER(5)}&=\frac{1.6871\times10^{3}-1.7708\times 10^3}{2.2265\times 10^{6}-2.1751\times 10^6 }=-0.0016.
\end{align*}
From Table \ref{ICERB1}, it is observed that there is a cost-saving of \$0.0068 for Strategy 7 over Strategy 10. This follows the comparison of ICER(7) and ICER(10). The indication of the lower ICER value obtained for Strategy 7 is that Strategy 10 strongly dominates Strategy 7. By implication, Strategy 10 is more costly and less effective to implement when compared with Strategy 7. Therefore, it is better to exclude Strategy 10 from the set of alternative intervention strategies. At this point, Strategy 7 is compared with Strategies 5, 6, 8 and 9.

The ICER is computed as
\begin{align*}
	\mbox{ICER(8)}&=\frac{2.8104\times 10^3-0}{1.9253\times10^6-0}= 0.0015,\\
	\mbox{ICER(9)}&=\frac{4.1495\times 10^3 - 2.8104\times 10^3 }{2.0684\times 10^6-1.9253\times10^6  }=0.0094,\\
	\mbox{ICER(6)}&=\frac{1.3487\times10^3-4.1495\times 10^3}{2.1128\times 10^6-2.0684\times 10^6 }=-00631,\\
\mbox{ICER(7)}&=\frac{1.7708\times 10^3 - 1.3487\times 10^3 }{2.1751\times 10^6-2.1128\times10^6  }=0.0068,\\
	\mbox{ICER(5)}&=\frac{1.6871\times10^{3}-1.7708\times 10^3}{2.2265\times 10^{6}-2.1751\times 10^6 }=-0.0016.
\end{align*}
The results obtained are summarized in Table \ref{ICERB2}.
\begin{table}[h!]
	\centering
	\caption{Incremental cost-effectiveness ratio for scenario B\label{ICERB2}}
	%	{\def\arraystretch{1.2}	
	\begin{tabular}{lccccc}
		\toprule
		{\bf Strategy} & {\bf Infection averted} & {\bf Cost} & {\bf IAR} & {\bf ACER}& {\bf ICER} \\ \toprule
		Strategy 8: $u_2(t), u_3(t)$ & $1.9253\times 10^{6}$ & $2.8104\times 10^{3}$ & $1.4524$ & $0.0015$&$0.0015$\\
			Strategy 9: $ u_2(t), u_4(t)$ & $2.0684\times 10^{6}$ & $4.1495\times 10^{3}$ & $1.4077$ & $0.0020$&$0.0094$\\
		Strategy 6: $u_1(t), u_3(t)$ & $2.1128\times 10^{6}$ & $1.3487\times 10^{3}$ & $1.5239$ & $6.3834\times 10^{-4}$&$-0.0631$\\
		Strategy 7: $u_1(t), u_4(t)$ & $2.1751\times 10^{6}$ & $1.7708\times 10^{3}$ & $1.4506$ & $8.1410\times 10^{-4}$&$0.0068$\\
		Strategy 5: $u_1(t), u_2(t)$ & $2.2265\times 10^{6}$ & $1.6871\times 10^{3}$ & $1.5759$ & $7.5775\times 10^{-4}$&$-0.0016$\\
		\bottomrule
	\end{tabular}
	%	}
\end{table}
Table \ref{ICERB2} shows a cost-saving of \$0.0068 for Strategy 7 over Strategy 9 by comparing ICER(7) and ICER(9). The higher ICER value obtained for Strategy 9 implies that Strategy 9 strongly dominated, more costly and less effective to implement when compared with Strategy 7. Therefore, Strategy 9 is left out of the list of alternative control interventions to implement for the purpose of preserving the limited resources. We further compare Strategy 7 with Strategies 5, 6 and 8.

The computation of ICER for Strategies 5, 6, 7 and 8 is as follows:
\begin{align*}
	\mbox{ICER(8)}&=\frac{2.8104\times 10^3-0}{1.9253\times10^6-0}= 0.0015,\\
	\mbox{ICER(6)}&=\frac{1.3487\times10^3-2.8104\times 10^3}{2.1128\times 10^6-1.9253\times 10^6 }= -0.0078,\\
\mbox{ICER(7)}&=\frac{1.7708\times 10^3 - 1.3487\times 10^3 }{2.1751\times 10^6-2.1128\times10^6  }=0.0068,\\
	\mbox{ICER(5)}&=\frac{1.6871\times10^{3}-1.3487\times 10^3}{2.2265\times 10^{6}-2.1128\times 10^6 }=0.0030.
\end{align*}
The summary of the results obtained is presented in Table \ref{ICERB3}.
\begin{table}[h!]
	\centering
	\caption{Incremental cost-effectiveness ratio for scenario B\label{ICERB3}}
	%	{\def\arraystretch{1.2}	
	\begin{tabular}{lccccc}
		\toprule
		{\bf Strategy} & {\bf Infection averted} & {\bf Cost} & {\bf IAR} & {\bf ACER}& {\bf ICER} \\ \toprule
		Strategy 8: $u_2(t), u_3(t)$ & $1.9253\times 10^{6}$ & $2.8104\times 10^{3}$ & $1.4524$ & $0.0015$&$0.0015$\\
			Strategy 6: $u_1(t), u_3(t)$ & $2.1128\times 10^{6}$ & $1.3487\times 10^{3}$ & $1.5239$ & $6.3834\times 10^{-4}$&$ -0.0078$\\
		Strategy 7: $u_1(t), u_4(t)$ & $2.1751\times 10^{6}$ & $1.7708\times 10^{3}$ & $1.4506$ & $8.1410\times 10^{-4}$&$0.0068$\\
		Strategy 5: $u_1(t), u_2(t)$ & $2.2265\times 10^{6}$ & $1.6871\times 10^{3}$ & $1.5759$ & $7.5775\times 10^{-4}$&$-0.0016$\\
	\bottomrule
	\end{tabular}
	%	}
\end{table}

Looking at Table \ref{ICERB3}, a comparison of ICER(7) and ICER(8) shows a cost-saving of \$0.0015 for Strategy 8 over Strategy 7. The lower ICER obtained for Strategy 8 is that Strategy 7 strongly dominated, more costly and less effective to implement than Strategy 8. Thus, it is better to discard Strategy 7 from the list of alternative intervention strategies. At this juncture, Strategy 8 is further compared with Strategies 5 and 6.

The calculation of ICER is given as
\begin{align*}
	\mbox{ICER(8)}&=\frac{2.8104\times 10^3-0}{1.9253\times10^6-0}= 0.0015,\\
	\mbox{ICER(6)}&=\frac{1.3487\times10^3-2.8104\times 10^3}{2.1128\times 10^6-1.9253\times 10^6 }=-0.0078,\\
	\mbox{ICER(5)}&=\frac{1.6871\times10^{3}-1.3487\times 10^3}{2.2265\times 10^{6}-2.1128\times 10^6 }=0.0030.
\end{align*}
Table \ref{ICERB4} summarizes the results obtained from the ICER computations.
\begin{table}[h!]
	\centering
	\caption{Incremental cost-effectiveness ratio for scenario B\label{ICERB4}}
	%	{\def\arraystretch{1.2}	
	\begin{tabular}{lccccc}
		\toprule
		{\bf Strategy} & {\bf Infection averted} & {\bf Cost} & {\bf IAR} & {\bf ACER}& {\bf ICER} \\ \toprule
		Strategy 8: $u_2(t), u_3(t)$ & $1.9253\times 10^{6}$ & $2.8104\times 10^{3}$ & $1.4524$ & $0.0015$&$0.0015$\\
		Strategy 6: $u_1(t), u_3(t)$ & $2.1128\times 10^{6}$ & $1.3487\times 10^{3}$ & $1.5239$ & $6.3834\times 10^{-4}$&$-0.0078$\\
		Strategy 5: $u_1(t), u_2(t)$ & $2.2265\times 10^{6}$ & $1.6871\times 10^{3}$ & $1.5759$ & $7.5775\times 10^{-4}$&$0.0030$\\
		\bottomrule
	\end{tabular}
	%	}
\end{table}
In Table \ref{ICERB4}, it is shown that there is a cost-saving of \$0.0015 for Strategy 8 over Strategy 5 following the comparison of ICER(5) with ICER(8). The higher ICER value for Strategy 5 suggests that Strategy 5 is dominated, more costly and less effective to implement than Strategy 8. Hence, Strategy 5 is discarded from the set of alternative intervention strategies. Finally, Strategy 8 is compared with Strategy 6.
The ICER is computed as follows:
\begin{align*}
	\mbox{ICER(8)}&=\frac{2.8104\times 10^3-0}{1.9253\times10^6-0}= 0.0015,\\
	\mbox{ICER(6)}&=\frac{1.3487\times10^3-2.8104\times 10^3}{2.1128\times 10^6-1.9253\times 10^6 }=-0.0078.
\end{align*}
We give the summary of the results in Table \ref{ICERB5}.
\begin{table}[h!]
	\centering
	\caption{Incremental cost-effectiveness ratio for scenario B\label{ICERB5}}
	%	{\def\arraystretch{1.2}	
	\begin{tabular}{lccccc}
		\toprule
		{\bf Strategy} & {\bf Infection averted} & {\bf Cost} & {\bf IAR} & {\bf ACER}& {\bf ICER} \\ \toprule
		Strategy 8: $u_2(t), u_3(t)$ & $1.9253\times 10^{6}$ & $2.8104\times 10^{3}$ & $1.4524$ & $0.0015$&$0.0015$\\
		Strategy 6: $u_1(t), u_3(t)$ & $2.1128\times 10^{6}$ & $1.3487\times 10^{3}$ & $1.5239$ & $6.3834\times 10^{-4}$&$-0.0078$\\
		\bottomrule
	\end{tabular}
	%	}
\end{table}

Table \ref{ICERB5} reveals that ICER(8) is greater than ICER(6), implying that Strategy 6 is dominated by Strategy 8. This indicates that Strategy 8 is more costly and less effective to implement when compared with Strategy 6. Therefore, Strategy 8 is excluded from the list of alternative intervention strategies. Consequently, Strategy 6 (which combines practising physical or social distancing protocols with practising proper and safety measures by exposed, asymptomatic infected and asymptomatic infected individuals) is considered most cost-effective among the six different control strategies in Scenario B under investigation in this study, which confirms the results in Figure \ref{SB6}.

\subsubsection{Scenario C: use of triple controls}

This part explores the implementation of IAR, ACER and ICER cost analysis techniques on Strategies 11, 12 and 13 using the results obtained from the numerical simulations of the optimality system under Scenario C as presented in Figure \ref{3control}.

To determine the most cost-effective strategy among strategies 11, 12 and 13 using the IAR method, the obtained IAR values for the three strategies are given in the fourth column of Table \ref{ICERC1}. It is shown that Strategy 11 has the highest IAR value, followed by Strategy 12, then Strategy 13, which has the lowest IAR value. Therefore, based on this cost analysis approach, Strategy 11 (which combines practising physical or social distancing protocols with the efforts of practising personal hygiene by cleaning contaminated surfaces with alcohol-based detergents and practising proper and safety measures by exposed, asymptomatic infected and asymptomatic infected individual) is the most cost-effective control strategy to implement in Scenario C.

Also, the ACER cost analysis approach is employed to determine the most cost-effective strategy among Strategies 11, 12 and 13. To do this, the ACER values obtained for these strategies are as given in the fifth column of Table \ref{ICERC1}. It is observed that Strategy 12 has the lowest ACER value. The successive strategy with the lowest ACER value is Strategy 11, followed by Strategy 13, which has the highest ACER value. Therefore, according to ACER cost analysis, Strategy 12 is the most cost-effective Strategy to implement in Scenario C.

\begin{table}[h!]
	\centering
	\caption{Incremental cost-effectiveness ratio for scenario C\label{ICERC1}}
	%	{\def\arraystretch{1.2}	
	\begin{tabular}{lccccc}
		\toprule
		{\bf Strategy} & {\bf Infection averted} & {\bf Cost} & {\bf IAR} & {\bf ACER}& {\bf ICER} \\ \toprule
		Strategy 13: $ u_2(t),u_3(t), u_4(t)$ & $2.1053\times 10^{6}$ & $5.0186\times 10^{3}$ & $1.4022$ & $0.0024$& $0.0024$\\
		Strategy 11: $u_1(t), u_2(t),u_3(t)$ & $2.2265\times 10^{6}$ & $2.2464\times 10^{3}$ & $1.5641$ & $0.0010$&$-0.0229$\\
		Strategy 12: $u_1(t), u_2(t), u_4(t)$ & $2.2265\times 10^{6}$ & $1.8726\times 10^{3}$ & $1.4706$ & $8.4107\times 10^{-4}$&$-$\\
		\bottomrule
	\end{tabular}
	%	}
\end{table}

The cost-effective strategy among Strategies 11, 12 and 13 is considered in Scenario C using ICER and cost-minimizing analysis technique due to the equal number of infection averted by Strategies 11, 12. To implement this technique, the three intervention strategies are ranked in increasing order based on the total number of COVID-19 infections averted.

The calculation of ICER in Table \ref{ICERC1} is demonstrated as follows:
\begin{align*}
	\mbox{ICER(13)}&=\frac{5.0186\times 10^3}{2.1053\times10^6}= 0.0024,\\
	\mbox{ICER(11)}&=\frac{2.2464\times10^3-5.0186\times 10^3}{2.2265\times 10^6-2.1053\times 10^6 }=-0.0229.
\end{align*}
Note that, due to the equal number of infection averted by Strategies 11, 12, the ICER is not compared between these strategies. It is shown in Table \ref{ICERC1} that ICER(13) is greater than ICER(11). Thus, Strategy 13 strongly dominates Strategy 11, implying that Strategy 13 is more costly and less effective to implement in comparison with Strategy 11. Therefore, Strategy 13 is eliminated from the list of alternative control strategies. At this point, there is no need to re-compute ICER further for the competing Strategies 11 and 12 because the two strategies avert the same total number of infections. However, the minimization cost technique is used to decide which of the strategies is more cost-effective. It is seen that Strategy 12 requires a lower cost to be implemented compared to Strategy 11. Therefore, Strategy 12 (which combines practising physical or social distancing protocols with the efforts of practising personal hygiene by cleaning contaminated surfaces with alcohol-based detergents and Fumigating schools in all levels of education, sports facilities and commercial areas such as markets and public toilet facilities) is considered the most cost-effective strategy in Scenario C.

\subsubsection{Scenario D: implementation of quadruplet}
Using the simulated results for the optimality system when Strategy 14 in Scenario D is implemented (see Figure \ref{Quadruplet}), the cost-effective analysis of this Strategy based on IAR, ACER shown.

Table \ref{ICERD} gives the summary of the results obtained from implementing the IAR and ACER cost analysis techniques.
\begin{table}[h!]
	\centering
	\caption{Application of optimal controls: scenario D\label{ICERD}}
	%	{\def\arraystretch{1.2}	
	\begin{tabular}{lcccc}
		\toprule
		{\bf Strategy} & {\bf Infection averted} & {\bf Cost} & {\bf IAR} & {\bf ACER} \\ \toprule
		Strategy 14: $u_1(t), u_2(t),u_3(t), u_4(t)$ & $2.2265\times 10^{6}$ & $2.0437\times 10^{3}$ & $1.4662$ & $9.1789\times 10^{-4}$\\
		\bottomrule
	\end{tabular}
	%	}
\end{table}

\subsubsection{Determination of the overall most cost-effective strategy}

So far, we have been able to obtain the most cost-effective strategy corresponding to each of the four scenarios considered in this study and also noticed from Figure \ref{Quadruplet} that using all the controls reduces the disease faster. Hence, it is also essential to determine the most cost-effective strategy from the four most cost-effective strategy corresponding to a particular scenario. Thus, IAR, ACER and ICER cost analysis techniques are implemented for Strategy 1 (from Scenario A), Strategy 6 (from Scenario B), Strategy 12 (from Scenario C) and Strategy 14 (from Scenario D).

To compare Strategies 1, 6, 12 and 14 using IAR cost analysis technique, it is observed in Figure \ref{S34} and Table \ref{ICERmost} that Strategy 1 has the highest IAR value, followed by Strategy 11, Strategy 6 and Strategy 14. It follows that Strategy 1 (practising physical or social distancing protocols only) is the overall most cost-effective strategy among all the strategies of Scenarios A to D combined as analysed in this work.

Based on the ACER cost analysis technique, and using the results illustrated in Figure \ref{S35} and Table \ref{ICERmost}, it is noted that Strategy 1 has the least ACER value. Strategy 6 is the next strategy with the least ACER value, followed by Strategy 14, then Strategy 11, which has the highest ACER value. Therefore, Strategy 1 is also the most cost-effective strategy among all the 14 control strategies considered in this paper.

\begin{figure}[h!]
	\begin{subfigure}{.5\textwidth}
		\centering
		\includegraphics[width=1\linewidth,  height=2in]{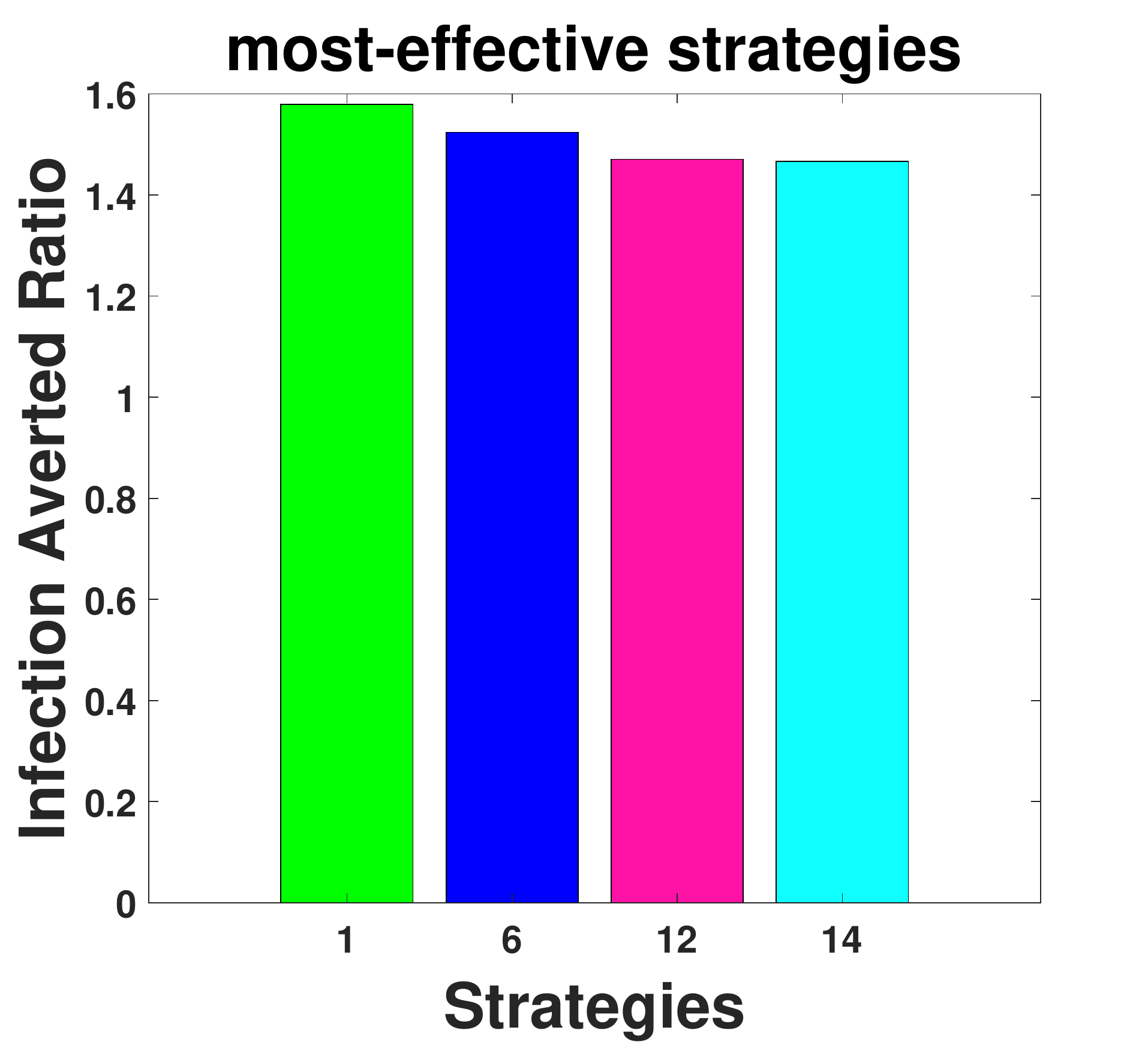}
		\caption{Infection Averted Ratio}\label{S34}
	\end{subfigure}%
	\begin{subfigure}{.5\textwidth}
		\centering
		\includegraphics[width=1\linewidth, height=2in ]{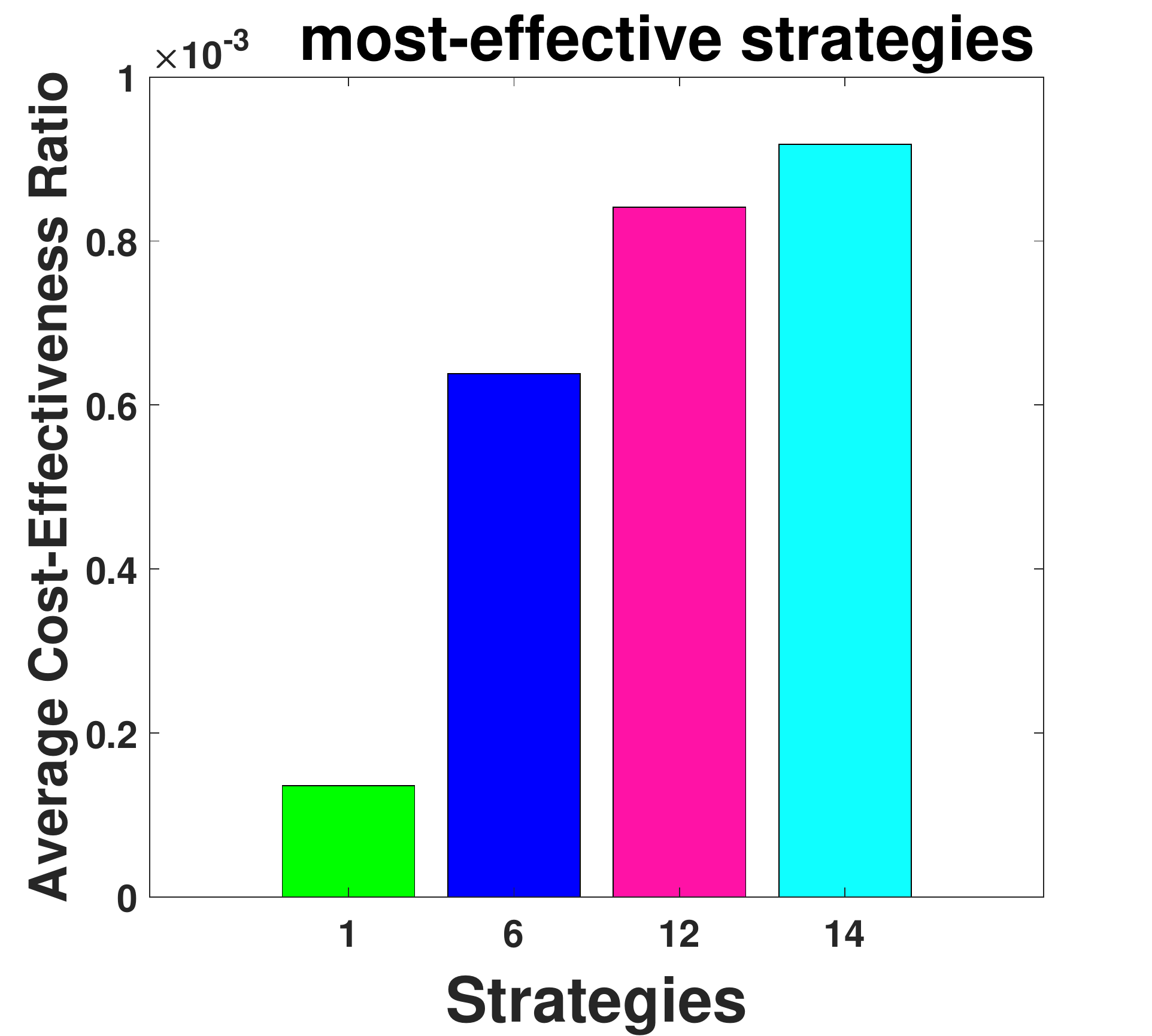}
		\caption{Average Cost-Effectiveness Ratio}\label{S35}
	\end{subfigure}
	\caption{IAR and ACER for the most-effective strategies in Scenarios A--D.}\label{de}
\end{figure}

It remains to compare Strategies 1, 6, 12 and 14 using the ICER cost analysis technique. To do this, the control strategies are ranked in increasing order of their effectiveness according to the total number of infections averted (IA) as given in Table \ref{ICERmost}.

The calculation of ICER is as follows:
\begin{align*}
\mbox{ICER(1)}&=\frac{281.1135}{2.0679\times 10^{6}}= 1.3594\times 10^{-4},\\
\mbox{ICER(6)}&=\frac{1.3487\times 10^3-281.1135}{2.1128\times10^6-2.0679\times10^{6}}= 0.0238,\\
\mbox{ICER(14)}&=\frac{2.0437\times10^3-1.3487\times10^3}{2.2265\times 10^6-2.1128\times10^{6} }= 0.0061.
\end{align*}
Note also that, due the equal number of infection averted by averted by Strategies 14 and 11, the ICER is not compared between these strategies.
The summary of the results is given in Table \ref{ICERmost}.
\begin{table}[h!]
	\centering
	\caption{Incremental cost-effectiveness ratio for the most-effective strategies\label{ICERmost}}
	%	{\def\arraystretch{1.2}	
	\begin{tabular}{lccccc}
		\toprule
		{\bf Strategy} & {\bf IA $\times 10^{6}$} & {\bf Cost} & {\bf IAR} & {\bf ACER}&{\bf ICER}  \\ \toprule
		Strategy  1: $u_1(t),$ & $2.0679$ & $281.1135$ & $1.5793$ & $1.3594\times 10^{-4}$&$1.3594\times 10^{-4}$\\
		Strategy 6: $u_1(t), u_3(t)$ & $2.1128$ & $1.3487\times 10^{3}$ & $1.5239$ & $6.3834\times 10^{-4}$&$0.0238$\\
		Strategy 14: $u_1(t), u_2(t),u_3(t), u_4(t)$ & $2.2265$ & $2.0437\times 10^{3}$ & $1.4662$ & $9.1789\times 10^{-4}$& $ 0.0061$\\
		Strategy 12: $u_1(t), u_2(t), u_4(t)$ & $2.2265$ & $1.8726\times 10^{3}$ & $1.4706$ & $8.4107\times 10^{-4}$&$-$\\
		\bottomrule
	\end{tabular}
	%	}
\end{table}
Table \ref{ICERmost} reveals a cost-saving of $\$ 0.0061$ for Strategy 14 over Strategy 6, following based on the comparison of ICER(14) and ICER(6). The lower ICER value obtained for Strategy 14 indicates that Strategy 6 is dominant, more costly and less effective to implement than Strategy 1. Thus, Strategy 6 is discarded from the set of alternative control interventions. The ICER is then calculated for Strategy 1 and Strategy 14 as iterated below and shown in Table \ref{ICERmost1}.
\begin{align*}
\mbox{ICER(1)}&=\frac{281.1135}{2.0679\times 10^{6}}= 1.3594\times 10^{-4},\\
\mbox{ICER(14)}&=\frac{2.0437\times10^3-281.1135}{2.2265\times 10^6-2.0679\times10^{6} }=0.0111.
\end{align*}
\begin{table}[h!]
	\centering
	\caption{Incremental cost-effectiveness ratio for the most-effective strategies\label{ICERmost1}}
	%	{\def\arraystretch{1.2}	
	\begin{tabular}{lccccc}
		\toprule
		{\bf Strategy} & {\bf IA $\times 10^{6}$} & {\bf Cost} & {\bf IAR} & {\bf ACER}&{\bf ICER}  \\ \toprule
		Strategy  1: $u_1(t),$ & $2.0679$ & $281.1135$ & $1.5793$ & $1.3594\times 10^{-4}$&$1.3594\times 10^{-4}$\\
		Strategy 14: $u_1(t), u_2(t),u_3(t), u_4(t)$ & $2.2265$ & $2.0437\times 10^{3}$ & $1.4662$ & $9.1789\times 10^{-4}$& $ 0.0111$\\
	Strategy 12: $u_1(t), u_2(t), u_4(t)$ & $2.2265$ & $1.8726\times 10^{3}$ & $1.4706$ & $8.4107\times 10^{-4}$&$-$\\
		\bottomrule
	\end{tabular}
	%	}
\end{table}
Table \ref{ICERmost1} reveals a cost-saving of $\$ 1.3594\times 10^{-4}$ for Strategy 1 over Strategy 14 based on the comparison of ICER(1) and ICER(14). The lower ICER value obtained for Strategy 1 indicates that Strategy 14 is dominant, more costly and less effective to implement than Strategy 1. Thus, Strategy 14 is discarded from the set of alternative control interventions.
The ICER is then recalculated for Strategy 1 and Strategy 12 as iterated below and shown in Table \ref{ICERmost2}.
\begin{align*}
\mbox{ICER(1)}&=\frac{281.1135}{2.0679\times 10^{6}}= 1.3594\times 10^{-4},\\
\mbox{ICER(12)}&=\frac{1.8726\times10^3-281.1135}{2.2265\times 10^6-2.0679\times10^{6} }=0.0100.
\end{align*}
\begin{table}[h!]
	\centering
	\caption{Incremental cost-effectiveness ratio for the most-effective strategies\label{ICERmost2}}
	%	{\def\arraystretch{1.2}	
	\begin{tabular}{lccccc}
		\toprule
		{\bf Strategy} & {\bf IA $\times 10^{6}$} & {\bf Cost} & {\bf IAR} & {\bf ACER}&{\bf ICER}  \\ \toprule
		Strategy  1: $u_1(t),$ & $2.0679$ & $281.1135$ & $1.5793$ & $1.3594\times 10^{-4}$&$1.3594\times 10^{-4}$\\
		Strategy 12: $u_1(t), u_2(t), u_4(t)$ & $2.2265$ & $1.8726\times 10^{3}$ & $1.4706$ & $8.4107\times 10^{-4}$&$0.0100$\\
		\bottomrule
	\end{tabular}
	%	}
\end{table}
Table \ref{ICERmost2} reveals a cost-saving of $\$ 1.3594\times 10^{-4}$ for Strategy 1 over Strategy 12 following based on the comparison of ICER(1) and ICER(14). The lower ICER value obtained for Strategy 1 indicates that Strategy 12 is dominant, more costly and less effective to implement than Strategy 1.
Therefore, comparing the strategies in scenarios A-D, we conclude that, Strategy 1 will be the most cost-saving and most effective control intervention in the Kingdom of Saudi Arabia. However, in terms of the infection averted, strategy 6, strategy 11, and strategy 12 and strategy 14 are just as good as strategy 1.

 Hence, from these analyses, we see that when one considers the following controls: $u_1$-practising physical or social distancing protocols; $u_2$-practising personal hygiene by cleaning contaminated surfaces with alcohol-based detergents; $u_3$-practising proper and safety measures by exposed, asymptomatic infected and asymptomatic infected individuals; $u_4$-fumigating schools in all levels of education, sports facilities and commercial areas such as markets and public toilet facilities in Kingdom of Saudi Arabia. $u_1$ (practising physical or social distancing protocols) has the lowest incremental cost-effectiveness and, therefore, gives the optimal cost on a large scale than all the other strategies.
\section{Concluding remarks}\label{ssec:6}
 We formulated an optimal control model for the model proposed in \cite{alqarni2020mathematical}. We used four COVID-19 controls in the absence of vaccination thus, practising physical or social distancing protocols; practising personal hygiene by cleaning contaminated surfaces with alcohol-based detergents; practising proper and safety measures by exposed, asymptomatic infected and asymptomatic infected individuals; and fumigating schools in all levels of education, sports facilities and commercial areas such as markets and public toilet facilities in Kingdom of Saudi Arabia. The implementation of all the control shows that the disease can be reduced when individuals strictly stick to the proposed controls in this work. The efficacy plots in Figure \ref{SD5} shows that the controls become much more effective after 39 days.  We also calculated the infection averted ratio (IAR), average cost-effectiveness ratio (ACER)  and the incremental cost-effectiveness ratio (ICER). We also utilized the cost-minimization analysis when it becomes evident that strategies 11, 12, and 14 had the same number of infection averted.
\section*{CRediT authorship contribution statement}
All authors contributed equally to the development of this work.%{\bf Joshua Kiddy Kwasi Asamoah}: Conceptualization, investigation, formal analysis, cost-effectiveness analysis, numerical simulations, writing - review \& editing. {\bf Eric Okyere}: Conceptualization, Formal analysis, writing - review \& editing. {\bf Afeez Abidemi}: Formal analysis, cost-effectiveness analysis, writing - review \& editing. {\bf Gui-Quan Sun}: Supervision, funding acquisition, writing - review \& editing. {\bf  Zhen Jin}: Supervision, writing - review \& editing. {\bf  Edward Acheampong}: writing - review \& editing. {\bf  Joseph Frank Gordon}: writing - review \& editing.
\section*{Funding}
This work was funded by the National Natural Science Foundation of China under the Grant number: 12022113, Henry Fok Foundation for Young Teachers (171002).
\section*{Declaration of Competing Interest}
The authors declare that they have no known competing financial interests or personal relationships that could have appeared to influence the findings reported in this paper.

%\section*{References}
\bibliography{covid19modelingreferences}

\begin{thebibliography}{10}
\expandafter\ifx\csname url\endcsname\relax
  \def\url#1{\texttt{#1}}\fi
\expandafter\ifx\csname urlprefix\endcsname\relax\def\urlprefix{URL }\fi
\expandafter\ifx\csname href\endcsname\relax
  \def\href#1#2{#2} \def\path#1{#1}\fi

\bibitem{piovella2020analytical}
N.~Piovella, Analytical solution of seir model describing the free spread of
  the {COVID-19} pandemic, Chaos, Solitons \& Fractals 140 (2020) 110243.

\bibitem{arino2020simple}
J.~Arino, S.~Portet, A simple model for {COVID-19}, Infectious Disease
  Modelling 5 (2020) 309.

\bibitem{wangping2020extended}
J.~Wangping, H.~Ke, S.~Yang, C.~Wenzhe, W.~Shengshu, Y.~Shanshan, W.~Jianwei,
  K.~Fuyin, T.~Penggang, L.~Jing, et~al., Extended sir prediction of the
  epidemics trend of {COVID-19} in italy and compared with hunan, china,
  Frontiers in medicine 7 (2020) 169.

\bibitem{cui2020dynamic}
Q.~Cui, Z.~Hu, Y.~Li, J.~Han, Z.~Teng, J.~Qian, Dynamic variations of the
  {COVID-19} disease at different quarantine strategies in wuhan and mainland
  china, Journal of Infection and Public Health 13 (2020) 849--855.

\bibitem{hu2020dynamic}
L.~Hu, L.-F. Nie, Dynamic modeling and analysis of {COVID-19} in different
  transmission process and control strategies, Mathematical Methods in the
  Applied Sciences (2020) 1--14.

\bibitem{mushayabasa2020role}
S.~Mushayabasa, E.~T. Ngarakana-Gwasira, J.~Mushanyu, On the role of
  governmental action and individual reaction on {COVID-19} dynamics in south
  africa: A mathematical modelling study, Informatics in Medicine Unlocked 20
  (2020) 100387.

\bibitem{garba2020modeling}
S.~M. Garba, J.~M.-S. Lubuma, B.~Tsanou, Modeling the transmission dynamics of
  the {COVID-19} pandemic in south africa, Mathematical biosciences 328 (2020)
  108441.

\bibitem{fatima2020modeling}
B.~Fatima, G.~Zaman, M.~A. Alqudah, T.~Abdeljawad, Modeling the pandemic trend
  of 2019 coronavirus with optimal control analysis, Results in Physics (2020)
  103660.

\bibitem{ali2020role}
M.~Ali, S.~T.~H. Shah, M.~Imran, A.~Khan, The role of asymptomatic class,
  quarantine and isolation in the transmission of {COVID-19}, Journal of
  Biological Dynamics 14~(1) (2020) 389--408.

\bibitem{lemecha2020optimal}
L.~Lemecha~Obsu, S.~Feyissa~Balcha, Optimal control strategies for the
  transmission risk of {COVID-19}, Journal of biological dynamics 14~(1) (2020)
  590--607.

\bibitem{aldila2020optimal}
D.~Aldila, M.~Z. Ndii, B.~M. Samiadji, Optimal control on {COVID-19}
  eradication program in indonesia under the effect of community awareness.

\bibitem{deressa2020modeling}
C.~T. Deressa, G.~F. Duressa, Modeling and optimal control analysis of
  transmission dynamics of {COVID-19}: The case of ethiopia, Alexandria
  Engineering Journal.

\bibitem{perkins2020optimal}
A.~Perkins, G.~Espana, Optimal control of the {COVID-19} pandemic with
  non-pharmaceutical interventions, Bulletin of Mathematical Biology 82 (2020)
  118.

\bibitem{oud2021fractional}
M.~A.~A. Oud, A.~Ali, H.~Alrabaiah, S.~Ullah, M.~A. Khan, S.~Islam, A
  fractional order mathematical model for {COVID-19} dynamics with quarantine,
  isolation, and environmental viral load, Advances in Difference Equations
  2021~(1) (2021) 1--19.

\bibitem{asamoah2020global}
J.~K.~K. Asamoah, M.~A. Owusu, Z.~Jin, F.~Oduro, A.~Abidemi, E.~O. Gyasi,
  Global stability and cost-effectiveness analysis of {COVID-19} considering
  the impact of the environment: using data from ghana, Chaos, Solitons \&
  Fractals 140 (2020) 110103.
\newblock \href {http://dx.doi.org/https://doi.org/10.1016/j.chaos.2020.110103}
  {\path{doi:https://doi.org/10.1016/j.chaos.2020.110103}}.

\bibitem{alqarni2020mathematical}
M.~S. Alqarni, M.~Alghamdi, T.~Muhammad, A.~S. Alshomrani, M.~A. Khan,
  Mathematical modeling for novel coronavirus ({COVID-19}) and control,
  Numerical Methods for Partial Differential Equations.

\bibitem{seidu2020optimal}
B.~Seidu, {Optimal Strategies for Control of COVID-19: A Mathematical
  Perspective}, Scientifica 2020.

\bibitem{omame2020analysis}
A.~Omame, N.~Sene, I.~Nometa, C.~I. Nwakanma, E.~U. Nwafor, N.~O. Iheonu,
  D.~Okuonghae, {Analysis of COVID-19 and comorbidity co-infection Model with
  Optimal Control}, medRxiv.

\bibitem{asamoah2021sensitivity}
J.~K.~K. Asamoah, Z.~Jin, G.-Q. Sun, B.~Seidu, E.~Yankson, A.~Abidemi,
  F.~Oduro, S.~E. Moore, E.~Okyere, Sensitivity assessment and optimal economic
  evaluation of a new {COVID-19} compartmental epidemic model with control
  interventions, Chaos, Solitons \& Fractals 146 (2021) 110885.
\newblock \href {http://dx.doi.org/https://doi.org/10.1016/j.chaos.2021.110885}
  {\path{doi:https://doi.org/10.1016/j.chaos.2021.110885}}.

\bibitem{okyere2020analysis}
E.~Okyere, S.~Olaniyi, E.~Bonyah, Analysis of {Zika} virus dynamics with sexual
  transmission route using multiple optimal controls, Scientific African 9
  (2020) e00532.

\bibitem{panja2019optimal}
P.~Panja, Optimal control analysis of a cholera epidemic model, Biophysical
  Reviews and Letters 14~(01) (2019) 27--48.

\bibitem{agusto2014optimal}
F.~Agusto, A.~Adekunle, {Optimal control of a two-strain tuberculosis-HIV/AIDS
  co-infection model}, Biosystems 119 (2014) 20--44.

\bibitem{agusto2019optimal}
F.~Agusto, M.~Leite, Optimal control and cost-effective analysis of the 2017
  meningitis outbreak in nigeria, Infectious Disease Modelling 4 (2019)
  161--187.

\bibitem{berhe2020optimal}
H.~W. Berhe, {Optimal Control Strategies and Cost-effectiveness Analysis
  Applied to Real Data of Cholera Outbreak in Ethiopia’s Oromia Region},
  Chaos, Solitons \& Fractals 138 (2020) 109933.

\bibitem{momoh2018optimal}
A.~A. Momoh, A.~F{\"u}genschuh, Optimal control of intervention strategies and
  cost effectiveness analysis for a {Zika} virus model, Operations Research for
  Health Care 18 (2018) 99--111.

\bibitem{olaniyi2020modelling}
S.~Olaniyi, K.~Okosun, S.~Adesanya, R.~Lebelo, Modelling malaria dynamics with
  partial immunity and protected travellers: optimal control and
  cost-effectiveness analysis, Journal of Biological Dynamics 14~(1) (2020)
  90--115.

\bibitem{tilahun2017modelling}
G.~T. Tilahun, O.~D. Makinde, D.~Malonza, Modelling and optimal control of
  pneumonia disease with cost-effective strategies, Journal of Biological
  Dynamics 11~(sup2) (2017) 400--426.

\bibitem{berhe2019co}
H.~W. Berhe, O.~D. Makinde, D.~M. Theuri, Co-dynamics of measles and dysentery
  diarrhea diseases with optimal control and cost-effectiveness analysis,
  Applied Mathematics and Computation 347 (2019) 903--921.

\bibitem{berhe2018optimal}
H.~W. Berhe, O.~D. Makinde, D.~M. Theuri, Optimal control and
  cost-effectiveness analysis for dysentery epidemic model, Applied Mathematics
  and Information Sciences 12 (2018) 1183--1195.

\bibitem{abidemi2020optimal}
A.~Abidemi, N.~A.~B. Aziz, Optimal control strategies for dengue fever spread
  in {Johor, Malaysia}, Computer Methods and Programs in Biomedicine (2020)
  105585.

\bibitem{asamoah2021non}
J.~K.~K. Asamoah, Z.~Jin, G.-Q. Sun, Non-seasonal and seasonal relapse model
  for {Q} fever disease with comprehensive cost-effectiveness analysis, Results
  in Physics (2021) 103889\href
  {http://dx.doi.org/https://doi.org/10.1016/j.rinp.2021.103889}
  {\path{doi:https://doi.org/10.1016/j.rinp.2021.103889}}.

\bibitem{seidu2021mathematical}
B.~Seidu, O.~Makinde, C.~S. Bornaa, Mathematical analysis of an industrial
  {HIV/AIDS} model that incorporates carefree attitude towards sex, Acta
  Biotheoretica (2021) 1--20.

\bibitem{khan2021robust}
M.~A. Khan, S.~Ullah, S.~Kumar, A robust study on 2019-ncov outbreaks through
  non-singular derivative, The European Physical Journal Plus 136~(2) (2021)
  1--20.
\newblock \href
  {http://dx.doi.org/https://doi.org/10.1140/epjp/s13360-021-01159-8}
  {\path{doi:https://doi.org/10.1140/epjp/s13360-021-01159-8}}.

\bibitem{abidemi2021impact}
A.~Abidemi, Z.~M. Zainuddin, N.~A.~B. Aziz, {Impact of control interventions on
  COVID-19 population dynamics in Malaysia: a mathematical study}, The European
  Physical Journal Plus 136~(2) (2021) 1--35.

\bibitem{iddrisu2021predictive}
A.-K. Iddrisu, E.~A. Amikiya, D.~Otoo, {A predictive model for daily cumulative
  COVID-19 cases in Ghana}, F1000Research 10~(343) (2021) 343.

\bibitem{acheampong2021modelling}
E.~Acheampong, E.~Okyere, S.~Iddi, J.~H. Bonney, J.~A. Wattis, R.~L. Gomes,
  {Modelling COVID-19 Transmission Dynamics in Ghana}, arXiv preprint
  arXiv:2102.02984.

\bibitem{otoo2021estimating}
D.~Otoo, E.~K. Donkoh, J.~A. Kessie, {Estimating the Basic Reproductive Number
  of COVID-19 Cases in Ghana}, European Journal of Pure and Applied Mathematics
  14~(1) (2021) 135--148.

\bibitem{asamoah2020mathematical}
J.~K.~K. Asamoah, C.~Bornaa, B.~Seidu, Z.~Jin, Mathematical analysis of the
  effects of controls on transmission dynamics of {SARS-CoV-2}, Alexandria
  Engineering Journal 59~(6) (2020) 5069--5078.
\newblock \href {http://dx.doi.org/https://doi.org/10.1016/j.aej.2020.09.033}
  {\path{doi:https://doi.org/10.1016/j.aej.2020.09.033}}.

\bibitem{khan2020dynamics}
M.~A. Khan, A.~Atangana, E.~Alzahrani, et~al., The dynamics of {COVID-19} with
  quarantined and isolation, Advances in Difference Equations 2020~(1) (2020)
  1--22.
\newblock \href {http://dx.doi.org/https://doi.org/10.1186/s13662-020-02882-9}
  {\path{doi:https://doi.org/10.1186/s13662-020-02882-9}}.

\bibitem{khan2020modeling}
M.~A. Khan, A.~Atangana, Modeling the dynamics of novel coronavirus
  {(2019-nCov)} with fractional derivative, Alexandria Engineering Journal
  59~(4) (2020) 2379--2389.
\newblock \href {http://dx.doi.org/https://doi.org/10.1016/j.aej.2020.02.033}
  {\path{doi:https://doi.org/10.1016/j.aej.2020.02.033}}.

\bibitem{moore2021global}
S.~E. Moore, H.~L.~N. Bamen, J.~K.~K. Asamoah, O.~Menoukeu-Pamen, Z.~Jin,
  {Global Stability and Sensitivity Assessment of COVID-19 with Timely and
  Delayed Diagnosis in Ghana}.

\bibitem{ngonghala2020mathematical}
C.~N. Ngonghala, E.~Iboi, S.~Eikenberry, M.~Scotch, C.~R. MacIntyre, M.~H.
  Bonds, A.~B. Gumel, Mathematical assessment of the impact of
  non-pharmaceutical interventions on curtailing the 2019 novel coronavirus,
  Mathematical biosciences 325 (2020) 108364.
\newblock \href {http://dx.doi.org/https://doi.org/10.1016/j.mbs.2020.108364}
  {\path{doi:https://doi.org/10.1016/j.mbs.2020.108364}}.

\bibitem{sun2020transmission}
G.-Q. Sun, S.-F. Wang, M.-T. Li, L.~Li, J.~Zhang, W.~Zhang, Z.~Jin, G.-L. Feng,
  Transmission dynamics of {COVID-19} in wuhan, china: effects of lockdown and
  medical resources, Nonlinear Dynamics 101~(3) (2020) 1981--1993.
\newblock \href {http://dx.doi.org/https://doi.org/10.1007/s11071-020-05770-9}
  {\path{doi:https://doi.org/10.1007/s11071-020-05770-9}}.

\bibitem{eikenberry2020mask}
S.~E. Eikenberry, M.~Mancuso, E.~Iboi, T.~Phan, K.~Eikenberry, Y.~Kuang,
  E.~Kostelich, A.~B. Gumel, To mask or not to mask: Modeling the potential for
  face mask use by the general public to curtail the covid-19 pandemic,
  Infectious Disease Modelling 5 (2020) 293--308.
\newblock \href {http://dx.doi.org/https://doi.org/10.1016/j.idm.2020.04.001}
  {\path{doi:https://doi.org/10.1016/j.idm.2020.04.001}}.

\bibitem{lopez2021modified}
L.~L{\'o}pez, X.~Rodo, A modified seir model to predict the covid-19 outbreak
  in spain and italy: simulating control scenarios and multi-scale epidemics,
  Results in Physics 21 (2021) 103746.
\newblock \href {http://dx.doi.org/https://doi.org/10.1016/j.rinp.2020.103746}
  {\path{doi:https://doi.org/10.1016/j.rinp.2020.103746}}.

\bibitem{abay2021mathematical}
A.~Abay, H.~W. Berhe, H.~A. Atsbaha, Mathematical modelling and analysis of
  {COVID-19} epidemic and predicting its future situation in ethiopia., Results
  in Physics (2021) 103853\href
  {http://dx.doi.org/https://doi.org/10.1016/j.rinp.2021.103853}
  {\path{doi:https://doi.org/10.1016/j.rinp.2021.103853}}.

\bibitem{bassey2021global}
B.~E. Bassey, J.~U. Atsu, Global stability analysis of the role of
  multi-therapies and non-pharmaceutical treatment protocols for {COVID-19}
  pandemic, Chaos, Solitons \& Fractals 143 (2021) 110574.

\bibitem{agusto2013optimal}
F.~Agusto, Optimal isolation control strategies and cost-effectiveness analysis
  of a two-strain avian influenza model, Biosystems 113~(3) (2013) 155--164.

\bibitem{abidemi2019global}
A.~Abidemi, R.~Ahmad, N.~A.~B. Aziz, {Global Stability and Optimal Control of
  Dengue with Two Coexisting Virus Serotypes}, MATEMATIKA: Malaysian Journal of
  Industrial and Applied Mathematics 35~(4) (2019) 149--170.

\bibitem{asamoah2017modelling}
J.~K.~K. Asamoah, F.~T. Oduro, E.~Bonyah, B.~Seidu, Modelling of rabies
  transmission dynamics using optimal control analysis, Journal of Applied
  Mathematics 2017.
\newblock \href {http://dx.doi.org/https://doi.org/10.1155/2017/2451237}
  {\path{doi:https://doi.org/10.1155/2017/2451237}}.

\bibitem{olaniyi2020mathematical}
S.~Olaniyi, O.~Obabiyi, K.~Okosun, A.~Oladipo, S.~Adewale, Mathematical
  modelling and optimal cost-effective control of {COVID-19} transmission
  dynamics, The European Physical Journal Plus 135~(11) (2020) 1--20.

\bibitem{asamoah2020deterministic}
J.~K.~K. Asamoah, Z.~Jin, G.-Q. Sun, M.~Y. Li, {A Deterministic Model for Q
  Fever Transmission Dynamics within Dairy Cattle Herds: Using Sensitivity
  Analysis and Optimal Controls}, Computational and mathematical methods in
  medicine 2020.
\newblock \href {http://dx.doi.org/https://doi.org/10.1155/2020/6820608}
  {\path{doi:https://doi.org/10.1155/2020/6820608}}.

\bibitem{asamoah2020backward}
J.~K.~K. Asamoah, F.~Nyabadza, Z.~Jin, E.~Bonyah, M.~A. Khan, M.~Y. Li,
  T.~Hayat, Backward bifurcation and sensitivity analysis for bacterial
  meningitis transmission dynamics with a nonlinear recovery rate, Chaos,
  Solitons \& Fractals 140 (2020) 110237.
\newblock \href {http://dx.doi.org/https://doi.org/10.1016/j.chaos.2020.110237}
  {\path{doi:https://doi.org/10.1016/j.chaos.2020.110237}}.

\bibitem{asamoah2018mathematical}
J.~K.~K. Asamoah, F.~Nyabadza, B.~Seidu, M.~Chand, H.~Dutta, Mathematical
  modelling of bacterial meningitis transmission dynamics with control
  measures, Computational and mathematical methods in medicine 2018.
\newblock \href {http://dx.doi.org/https://doi.org/10.1155/2018/2657461}
  {\path{doi:https://doi.org/10.1155/2018/2657461}}.

\bibitem{rector2005principles}
C.~R. Rector, C.~S, D.~J, Principles of Optimization Theory, Narosa Publishing
  House, New Delhi, 2005.

\bibitem{romero2018optimal}
J.~P. Romero-Leiton, J.~M. Montoya~Aguilar, E.~Ibarg{\"u}en-Mondrag{\'o}n, An
  optimal control problem applied to malaria disease in {Colombia}, Applied
  Mathematical Sciences 12~(6) (2018) 279--292.

\bibitem{pontryagin1962mathematical}
L.~Pontryagin, V.~Boltyanskii, R.~Gamkrelidze, E.~Mishchenko, The Mathematical
  Theory of Optimal Processes, Wiley, NY, 1962.

\bibitem{agusto2017optimal}
F.~B. Agusto, I.~M. ELmojtaba, Optimal control and cost-effective analysis of
  malaria/visceral leishmaniasis co-infection, PLoS One 12~(2) (2017) e0171102.

\end{thebibliography}

%\appendix

%\section{Proof of Lemma \ref{lema1}}
%\label{aplem1}
%\begin{proof}
%\end{proof}

\end{document}